\documentclass[11pt,letterpaper]{amsart}
\usepackage[svgnames]{xcolor}
\usepackage[colorlinks,allcolors={DarkBlue}]{hyperref}

\usepackage{comment}

\usepackage{esint}
\usepackage{amssymb}
\usepackage{tikz}
\usetikzlibrary{calc}
\usepackage{graphicx}
\usepackage{amsrefs,enumitem}

\usepackage{manfnt}
\usepackage{graphicx} % for \scalebox

\newtheorem*{theorem*}
{Theorem}
\newtheorem{theorem}{Theorem}
\newtheorem{lemma}[theorem]{Lemma}
\newtheorem{corollary}[theorem]{Corollary}
\newtheorem{proposition}[theorem]{Proposition}
\theoremstyle{definition}

\newtheorem{remark}[theorem]{Remark}

\newtheorem{definition}[theorem]{Definition}

\newcommand{\eref}[1]{(\ref{e.#1})}
\newcommand{\tref}[1]{Theorem \ref{t.#1}}
\newcommand{\lref}[1]{Lemma \ref{l.#1}}
\newcommand{\pref}[1]{Proposition \ref{p.#1}}
\newcommand{\cref}[1]{Corollary \ref{c.#1}}
\newcommand{\fref}[1]{Figure \ref{f.#1}}
\newcommand{\sref}[1]{Section \ref{s.#1}}

\numberwithin{theorem}{section}
\numberwithin{equation}{section}

\newcommand{\R}{\mathbb{R}}

\newcommand{\grad}{\nabla}

\def\Xint#1{\mathchoice
{\XXint\displaystyle\textstyle{#1}}%
{\XXint\textstyle\scriptstyle{#1}}%
{\XXint\scriptstyle\scriptscriptstyle{#1}}%
{\XXint\scriptscriptstyle\scriptscriptstyle{#1}}%
\!\int}
\def\XXint#1#2#3{{\setbox0=\hbox{$#1{#2#3}{\int}$ }
\vcenter{\hbox{$#2#3$ }}\kern-.6\wd0}}

\def\dashint{\Xint-}

\newcommand{\ep}{\varepsilon}
\newcommand{\e}{\varepsilon}
\newcommand{\osc}{\mathop{\textup{osc}}}

\definecolor{darkgreen}{rgb}{0,0.4,0}
\def\strikethrough#1{\setbox0\hbox{#1}\rlap{#1}\hbox to \wd0{\hss\strikebox\hss}}
\def\strikebox{\vrule height 0.6\ht0 depth -0.4\ht0 width 1.1\wd0}

\begin{document}

\title{Solutions of the Bernoulli one-phase problem with a defect}
\author{William M Feldman}
\address{Department of Mathematics, The University of Utah, Salt Lake City}
\author{Inwon C Kim}
\address{Department of Mathematics, University of California, Los Angeles}
\keywords{free boundary problems, far-field asymptotics, exterior problem, hodograph transform, viscosity solutions}
\begin{abstract}
We study the far-field behavior of solutions of the one-phase Bernoulli free boundary problem in the exterior of a ball, and of entire solutions with a single compactly supported inhomogeneity of the free boundary condition, which we call a {\it defect}. For solutions which blow down to a half-plane solution {(proper solutions)} we establish an asymptotic expansion at infinity: in dimension $d \geq 3$ the free boundary height converges to a limit at rate $|x|^{2-d}$ with a capacity-type coefficient, while in dimension $d=2$ the expansion carries a logarithmic term. {A significant novelty is that the expansions are quantitative and uniform over all the proper solutions.}
\end{abstract}

\maketitle

\setcounter{tocdepth}{1}
\tableofcontents

\section{Introduction}

We consider nonnegative continuous viscosity solutions of the one-phase Bernoulli free boundary problem
\begin{equation}\label{e.bernoulli-intro}
    \begin{cases}
        \Delta u = 0 & \hbox{ in } \{u>0\} \cap U\\
        |\grad u| = Q(x) &\hbox{ on } \partial \{u>0\} \cap U,
    \end{cases}
\end{equation}
in two closely related settings: the \emph{exterior problem}, where $U = \R^d \setminus B_1$ and $Q \equiv 1$, and the \emph{single-site defect problem}, where $U = \R^d$ and the coefficient differs from $1$ only on a compact set. In the latter case a function  $q : \R^d \to (-1,\infty)$, $q\in C_0(\overline{B_1(0)})$, represents a single chemical defect on the surface, and the problem reads
\begin{equation}\label{e.single-site}
     \begin{cases}
         \Delta u = 0 & \hbox{in } \{u>0\}  \\
         |\grad u| = 1+q(x) &\hbox{on } \partial \{u>0\} .
     \end{cases}
\end{equation}
In this paper we provide a precise description of the behavior at infinity of solutions which are asymptotically planar, quantified in terms of the size of the defect. The far-field coefficient appearing in the expansion plays the role of a capacity. Unlike the classical capacities of elliptic exterior problems, it is not unique for a given $q$. Instead the free boundary problem admits a family of capacitory potentials. The structure of that family, and its consequences for the pinning of free boundaries by defects, are the subject of the companion paper \cite{FKdilute}. While our results here serve as the basis for the companion paper \cite{FKdilute}, they also make independent contributions to the Bernoulli free boundary problem in the exterior setting.

Our analysis is focused on \emph{proper solutions} of \eqref{e.single-site}:
\begin{definition}\label{d.proper}
 $u \in C(\R^d)$ is \emph{proper} with direction $e$ if the sequence $u_r(x) : = r^{-1}u(rx)$ converges to $(x \cdot e)_+$ as $r\to 0$ in the following sense:
\begin{equation}\label{e.s2-blow-down-cond}
(u_r(x) - x\cdot e){\bf 1}_{\{u_r>0\}} \to 0 \ \hbox{ locally uniformly in $\R^d$ as } r\to 0.
\end{equation}
\end{definition}

This rather strong definition is introduced to rule out solutions that blow down to singular profiles, chiefly the two-plane solution $|x \cdot e|$, a well-known singular profile in the Bernoulli problem \cites{CS,KriventsovWeiss,JerisonKamburov2}. We mention that it suffices to only consider proper solutions for our interest in the application to pinning intervals, see \cite{FKdilute}. 

\subsection{Main results}
Throughout the paper we normalize $e = e_d$ by rotation. Our first main result concerns classical solutions of the exterior problem which are uniformly close to planar in gradient. 

\begin{theorem}[\tref{flat-exterior-original-coord}]\label{t.main-flat-exterior}
    Let $u$ be a classical $C^1$ solution of the homogeneous problem ($Q \equiv 1$) in $\R^d \setminus B_1$ such that $u(x) - x_d$ is bounded from above or from below in $\overline{\{u>0\}} \setminus B_1$, and such that
    \[\sup_{\overline{\{u>0\}} \setminus B_1} |\grad u - e_d| \leq \eta_1 \quad \hbox{ for } \,\, 0 < \eta_1(d) <1/2.\]
    Then there are $s, k \in \R$ and $C(d) \geq 1$ so that, in $d \geq 3$,
        \[|x|^{d-1}\left|u(x) - (x_d+s  + k |x|^{2-d})\right| \leq  C\max_{(B_2 \setminus B_1) \cap \{u>0\}} (u(x) - x_d),\]
    while in $d = 2$,
        \[\left|u(x) -(x_d +  k \log |x|)\right| \leq  C\max_{(B_2 \setminus B_1) \cap \{u>0\}}|u(x) - x_d|.\]
   Furthermore $|k| \leq C \max_{(B_2 \setminus B_1) \cap \{u>0\}} |u(x) - x_d|$.
\end{theorem}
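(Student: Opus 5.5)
We will use the hodograph transform. The gradient condition $|\nabla u - e_d|\le \eta_1<1/2$ gives $\partial_d u \ge 1/2$ on $\overline{\{u>0\}}\setminus B_1$. So we can invert $u$ in the $x_d$ variable. Write $y=(x',u(x))$ and $x_d = y_d - w(y)$, where $y_d=u$. Then $w$ is defined on a region of the half space $\{y_d\ge 0\}$ outside a set comparable to $B_1$ (contained in roughly $B_{3}$). The free boundary becomes the flat boundary $\{y_d=0\}$. Harmonicity of $u$ turns into a quasilinear uniformly elliptic equation for $w$ of the form $\mathrm{div}(A(\nabla w))=0$. Here $A$ is smooth for $|\nabla w|\lesssim\eta_1$, with $DA(0)=I$. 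The Bernoulli condition $|\nabla u|=1$ turns into a nonlinear oblique (Neumann-type) condition $\partial_d w = \tfrac12|\nabla w|^2$ type on $\{y_d=0\}$; more precisely $\partial_{y_d}w = G(\nabla' w)$ with $G$ quadratic at $0$. The quantity $u(x)-x_d$ at a fixed $x'$ corresponds to $w$ up to a harmonic-type change of variables. Moreover $|\nabla w|\le C\eta_1$ everywhere.

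Next we reflect. Extend $w$ evenly across $y_d=0$. The extension solves, in the exterior of a ball $B_R$ ($R=R(d)$), a divergence-form equation
\[\partial_i(a_{ij}(y)\partial_j w)=\partial_d F(y)\cdot{\bf 1}\hbox{-type boundary terms}.\]
Here the coefficients satisfy $a_{ij}-\delta_{ij}=O(|\nabla w|)$, and the boundary flux is quadratic in $\nabla w$. Equivalently, we can treat it as the Laplace equation with a right-hand side quadratic in $\nabla w$, in distributional divergence form. The plan is a bootstrap in decay rates. Let $M=\max_{B_2\setminus B_1}|u-x_d|$. The one-sided bound on $u-x_d$ plus Harnack (or Liouville for the linearized Neumann problem with $|\nabla w|$ small) gives first that $w$ is bounded by $CM$ in the exterior region. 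For this we use the comparison principle with barriers $\pm CM$ plus supersolutions like $\pm\epsilon|y|^{\alpha}$, and we use the one-sided bound to exclude linear growth in $y_d$. De Giorgi/Schauder estimates on dyadic annuli then give $|\nabla w|\le CM/|y|$, and $|D^2w|\le CM/|y|^2$.

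Then we expand. The nonlinearity is now $O(M^2|y|^{-2})$ in gradient-square terms, which in divergence form is a source of size $M^2|y|^{-2}$ in flux, i.e. $|y|^{-3}$ after differentiation. We write $w = h + v$, where $v$ is the Newtonian potential of the nonlinear error, cut off outside $B_R$. Then $h$ is harmonic in the exterior of $B_R$ (after reflection), with the even symmetry. The standard exterior expansion gives $h = s + k|y|^{2-d} + O(M|y|^{1-d})$ for $d\ge3$; the dipole term vanishes in the $y_d$ direction by evenness, but tangential dipoles remain, and these are of order $|y|^{1-d}$. For $d=2$ it gives $h=s+k\log|y|+O(M|y|^{-1})$, and here $s$ is absorbed or controlled. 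The potential $v$ decays like $M^2|y|^{2-2d+...}$. We iterate once to show $v$ and its contribution to the capacity fit within $CM|y|^{1-d}$, with $|k|,|s|\le CM$, and with $|k|$ controlled by the flux of $w$ through $\partial B_R$. Finally we return to the original coordinates. Using $x_d=y_d-w$ and $u=y_d$, we get $u(x)=x_d+w(x',u(x))$. Replacing $w(x',u)$ by $w(x)$ costs $|\nabla w|\,|w|\lesssim M^2|x|^{-1}\cdot|x|^{2-d}$-type errors in $d\ge3$, which are $\le CM|x|^{1-d}$. This yields the stated expansions.

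The main obstacle we expect is the first step: uniform boundedness $|w|\le CM$ from only a one-sided bound. The approach we would try first is a Liouville/Harnack argument for the (nearly Neumann) reflected equation. The one-sided bound makes $w-\inf$ a nonnegative solution in the exterior domain. Harnack on dyadic annuli, combined with the sublinear growth forced by $|\nabla w|\le C\eta_1$ and the flat Neumann structure, then gives oscillation decay and hence boundedness by the data on $B_2\setminus B_1$. In $d=2$ extra care is needed because of the logarithm: there we would first estimate the flux $k$ and then subtract $k\log|y|$ before applying Harnack.
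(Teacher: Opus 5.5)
\textbf{Gap: growth control from the one-sided bound.} Your outline matches the paper's: hodograph transform, quadratic linearization, subtracting a potential, exterior harmonic expansion, then returning to $x$. Your observation that the hodograph equation is in divergence form is correct: with $\psi=y_d-w$ it is the Euler--Lagrange equation of $\int \frac{1+|\grad'\psi|^2}{\partial_d\psi}+\partial_d\psi$, with $|\grad u|=1$ as the natural boundary condition. The resulting conserved flux is a nice alternative way to bound $|k|$.

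The gap is the step you flag as the main obstacle. This is where the content of the theorem lies, and the mechanism you propose does not work:
\begin{enumerate}
\item Harnack on dyadic annuli is scale invariant and only compares $\sup$ and $\inf$ on the same sphere. It holds just as well for unbounded positive solutions, e.g.\ $\log|y|$ for the Neumann Laplacian on $\R^2_+\setminus B_1$. So by itself it cannot separate $d\ge 3$, where positive exterior solutions are bounded, from $d=2$, where they are not.
\item $|\grad w|\le C\eta_1$ gives only linear growth, not sublinear growth.
\item An upper barrier $CM+\ep|y|^\alpha$ with $\alpha>0$ is unavailable, since $|y|^\alpha$ is strictly subharmonic.
\end{enumerate}
What is needed is a comparison that encodes transience. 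The paper compares with the decaying homogeneous barriers $\pm c|y|^{2-d+\delta}$ of \lref{fundie-barriers} (e.g.\ $|y|^{-1/2}$); these are admissible because $A(\grad v)$ has ellipticity ratio close to $1$. This gives \lref{one-sided-bound-2}: $\min_{\partial B_r^+}v$ cannot exceed $\min_{\partial B_1^+}v$ by more than $C\osc_{(B_2\setminus B_1)^+}v$, and symmetrically the max cannot drop. Only then does Harnack for $v-\inf v\ge 0$ turn this into a bound on $\max_{\partial B_r^+}v$ (\lref{one-sided-bound}). After that, \lref{height-exists} and \lref{range-of-exterior-v} control $v_\infty$ quantitatively. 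Your divergence structure might supply this step differently: write the equation as $\textup{div}(a(y)\grad w)=0$ with a conormal condition, reflect evenly, and compare with capacitary potentials controlled by Littman--Stampacchia--Weinberger bounds. No such argument is in the proposal, though.

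\textbf{The case $d=2$.} Here the proposal is further off. Your first step asserts $|w|\le CM$ and hence $|\grad w|\le CM/|y|$, which is false whenever $k\neq 0$. The proposed repair, subtracting $k\log|y|$ and then applying Harnack, also fails. The function $w-k\log|y|$ is not a solution, so Harnack does not apply to it. Its residual $k\,\textup{tr}((A(\grad w)-I)D^2\log|y|)$ is only $O(k\eta_1|y|^{-2})$ until you already know that $\grad w$ decays, and that is exactly the critical rate in two dimensions.

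The paper instead proceeds as follows. It proves a maximum principle at infinity (\lref{limit-containment-2d}) using the supersolutions $L^{-1}(\log|y|-\log(\log L+\log|y|))$ from \lref{logarithmic-barriers}, which tend to $+\infty$; no homogeneous supersolution does. It adds a sub-power growth bound from the supersolutions $-|y|^{\delta}$ (\lref{max-max-bd-2d}). With Harnack these give $|\grad^k v|\le CM|y|^{\delta-k}$ (\lref{decay-barrier}). That bound already makes the quadratic errors integrable after inversion, so the $k\log|y|$ term and the bound on $k$ come out of the removable-singularity step rather than having to be estimated first.

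\textbf{Remaining steps.} Once growth control is in hand, the rest of your plan is a legitimate divergence-form variant of the paper's Kelvin transform plus Neumann-kernel argument. One correction: with mere boundedness as input, the error potential first decays only like $|y|^{-1}$ (up to logarithms), not like $|y|^{2-2d}$. So in $d\ge 4$ a single further iteration does not suffice. The paper avoids iterating by first proving the $|y|^{2-d+\delta}$ decay of \lref{decay-est-d3}.
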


The gradient flatness hypothesis lets us transfer the entire analysis to the partial hodograph coordinates, where the free boundary problem becomes a nonlinear elliptic equation in a half-space with a nonlinear oblique (Neumann-type) boundary condition; the precise statement in those variables is \tref{flat-exterior}. The proof uses explicit homogeneous barriers, a quantitative linearization of the hodograph PDE, and a Kelvin transform argument reducing the expansion at infinity to a boundary regularity estimate at the origin. The proof is flexible and applies to any nonlinear oblique problem whose coefficients satisfy the structural bounds \eref{A-property} and \eref{N-property}.

Our second main result removes the global flatness hypothesis. It gives the expansion for arbitrary proper solutions of the defect problem, with constants quantified by the defect size.

\begin{theorem}[\tref{asymptotic-expansion-at-infty}]\label{t.main-expansion}
   Let $u$ be a proper solution of \eref{single-site} (with direction $e = e_d$).
   \begin{enumerate}[label = (\roman*)]
    \item If $d=2$ there is $ k(u)\in \R$ so that
    \[\sup_{x \in \overline{\{u>0\}}}|u(x) -(x_d  +k \log |x|)| < +\infty,\]
       and there are universal constants $\sigma_0 \in (0, 1/2]$ and $C\geq 1$ so that if $|q|\leq \sigma_0$ then $C\min q \leq k(u) \leq C \max q$. For general $q$ there is $C(\min q)$ so that $k(u) \geq -C$.
       \item If $d \geq 3$ there are $s(u), k(u) \in \R$ so that
   \[|u(x) -(x_d +  s -k|x|^{2-d})| \leq C_0(1+|x|)^{1-d} \ \hbox{ for } \ x \in \{u>0\},\]
   and if $k \neq 0$ then $|s|,|k| \leq C_0=C_0(d,\min q,\max q)$. If moreover $\max |q| \leq \sigma_0(d)$ then $C_0\leq C(d) \max |q|$.
   \end{enumerate}
\end{theorem}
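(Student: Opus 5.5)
The plan is to reduce \tref{main-expansion} to \tref{main-flat-exterior}. The reduction has two parts: first show that a proper solution is gradient-flat outside a large ball of controlled radius, and then control the quantity $\max_{B_{2R}\setminus B_R}|u-x_d|$ that appears on the right-hand side of \tref{main-flat-exterior}.

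\textbf{Step 1: flatness at a large scale.} Since $u$ is proper, the rescalings $u_r$ converge to $(x_d)_+$ in the sense of \eqref{e.s2-blow-down-cond}. Outside $B_1$ the function $u$ solves the homogeneous problem, so the improvement-of-flatness / $\varepsilon$-regularity theory for flat one-phase solutions (De Silva, or Caffarelli's flat implies Lipschitz implies $C^{1,\alpha}$) applies at scale $r$. This shows that for some $R_0(u)$ the solution is classical in $\{|x|>R_0\}$ and satisfies $|\nabla u-e_d|\le\eta_1$ there. Applying \tref{main-flat-exterior} to $u(R_0\,\cdot)/R_0$ then gives the expansion with \emph{some} constants $s,k$. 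In $d=2$ we also obtain boundedness of $u-x_d-k\log|x|$ on $\{u>0\}\setminus B_{R_0}$. Inside $B_{R_0}$ this quantity is trivially bounded, by continuity together with the Lipschitz bound and the nondegeneracy of $u$.

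\textbf{Step 2: quantitative bounds on $s$ and $k$.} This is the main obstacle, because $R_0$ above depends on $u$ and we need constants depending only on $d$ and $q$.

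\emph{Small defect.} Here I would use comparison with the explicit radial barriers $x_d+s\pm k|x|^{2-d}$ (respectively $\pm k\log|x|$ in $d=2$), slightly perturbed into genuine sub- and supersolutions. They satisfy $|\nabla|=1\pm O(\max|q|)$ near $B_1$ and are asymptotically planar, so by a sliding argument from infinity, which uses the asymptotics of Step 1, $u$ is trapped between two such barriers. This yields $|u-x_d-s|\le C\max|q|$ on $B_2\setminus B_1$. In particular the oscillation of $u-x_d$ there is $O(\max|q|)$, and \tref{main-flat-exterior}, applied to $u-s$ in the exterior of $B_1$ (where gradient flatness follows from the $\varepsilon$-regularity once $\sigma_0$ is small), gives $C_0\le C\max|q|$.

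\emph{Sign bounds.} For the bounds $C\min q\le k\le C\max q$ in $d=2$, I would compare $u$ with the translated half-plane solutions $(x_d+t)_+$, which are subsolutions when $q\ge0$. More generally, I would integrate $\Delta u=0$ over $\{u>0\}\cap B_R$ using the divergence theorem. The flux through the free boundary equals $\int_{\partial\{u>0\}\cap B_R}(1+q)$, and the flux through $\partial B_R$ equals $\pi k+\int(\text{planar})$ up to $o(1)$. Comparing with the planar solution then expresses $k$ as a free-boundary "area excess" weighted by $q$. This gives $k$ with the sign-controlled bound, and $k\ge -C(\min q)$ for general $q$.

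\emph{Large defect, $d\ge3$, $k\neq0$.} I would argue by compactness. Suppose a sequence of proper solutions has $|s_n|$ or $|k_n|\to\infty$. Normalize by the translations $s_n$. The key issue is then showing that flatness holds at a uniform scale $R(d,\min q,\max q)$. I expect this to follow from a uniform Lipschitz bound $\|\nabla u\|\le C(\max q)$ and a nondegeneracy bound with constant $c(\min q)$, combined with the fact that the free boundary outside $B_1$ is a global minimizer-type graph. A Liouville/compactness argument would produce a limit that is either a two-plane solution, which is excluded by properness in the limit, or a contradiction with $k\ne0$.

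I expect this uniform-scale flatness to be the hardest step, and my first attempt would be to obtain it through the monotonicity of the flux (capacity) identity.
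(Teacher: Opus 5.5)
Your Step 1 matches the paper, and your small-defect plan in $d\ge3$ is essentially the paper's patched barrier. However, the step that makes the constants depend only on $(d,\min q,\max q)$ is missing.

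\textbf{The large-defect compactness argument is circular.} Properness is only a qualitative condition at infinity. Along a sequence whose flatness scales $R_n$ diverge, it does not pass to local uniform limits. Lipschitz and nondegeneracy bounds give you a limit, but they say nothing about its blow-down. So ``the two-plane solution is excluded by properness in the limit'' presupposes the uniform flatness scale you are trying to prove. A minimizer-type graph structure or a flux monotonicity is also not available for viscosity solutions.

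\textbf{The missing idea is the paper's sliding-barrier argument.} One builds sub- and supersolutions of the cylindrical model problem \eref{barrier-eqn}, with tails $x_d\pm C_1\min\{1,|x|^{2-d}\}$. Its coefficient $1+\sigma\mathbf{1}_{B_1'\times\R}$ is invariant under $x_d$-translations, so the barrier satisfies the right inequality wherever the defect sits relative to it. Slide $\phi_{\min q}(x+te_d)$ down and $\phi_{\max q}(x+te_d)$ up. The qualitative expansion from Step 1 supplies the ordering near infinity for every $t$ on the correct side of $s$, so Lemma \ref{l.sliding-comparison} gives $\phi_{\max q}(x+se_d)\le u\le\phi_{\min q}(x+se_d)$. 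This yields:
\begin{itemize}
\item flatness at a scale $R_0(d,\min q,\max q)$;
\item a two-sided bound on $k$;
\item a bound on $|s|$, obtained from a free boundary point in $\overline{B_1}$.
\end{itemize}
The last item is where the hypothesis $k\neq0$ enters, which your proposal never addresses. Otherwise the free boundary misses $\overline{B_1}$ and $u=(x_d+s)_+$ with $s$ arbitrary.

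For large $|\sigma|$ the barriers are genuinely nonlinear.
\begin{itemize}
\item Supersolution: $x_d+\Phi(x+(r+1)e_d)$, restricted to its unbounded positivity component, has slope at most $1$ on its free boundary. Its slope tends to $0$ at its lowest free boundary point as $r\downarrow\Phi(1)$.
\item Subsolution: $x_d-\int_{L_R}|x-y|^{2-d}\,d\mathcal{H}^1(y)$ has its free boundary in $\{x_d>0\}$ (this uses $d\ge3$), with slope at least $1$ there. Its slope tends to $\infty$ at its top point as $R\to\infty$.
\end{itemize}
A hyperbolic rescaling about these points produces any prescribed coefficient on the unit cylinder. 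The general bound $k\ge -C(\min q)$ in $d=2$ comes from the same supersolution, whose tail is logarithmic there.

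\textbf{Your $d=2$ sign bounds also fail as written.}
\begin{itemize}
\item The half-planes $(x_d+t)_+$ have slope $1$, so they are subsolutions for $|\grad u|=1+q$ when $q\le0$, not when $q\ge0$. In any case they only detect the sign of $k$ for one-signed $q$.
\item The flux identity gives $\pi k$, up to $o(1)$, as a nonnegative length excess of the free boundary plus $\int_{\partial\{u>0\}}q\,d\mathcal{H}^1$. Since the excess is uncontrolled, this gives at best a lower bound on $k$. Even that needs a bound on $\mathcal{H}^1(\partial\{u>0\}\cap\overline{B_1})$ and enough free-boundary regularity for the divergence theorem, neither of which is available for viscosity solutions.
\item The upper bound $k\le C\max q$ requires a subsolution with a positive logarithmic tail. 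Your ``slightly perturbed'' log barriers hide a real obstruction here. By the computation in Lemma \ref{l.logarithmic-supersolutions}, $(x_d+\sigma\log|x|+s)_+$ with $\sigma\neq0$ has slope strictly below $1$ on its zero set for large $|x|$, whatever the sign of $\sigma$.
\end{itemize}
The paper circumvents this in hodograph coordinates. It patches $\varsigma y_d$ with $2\varsigma(\log|y|+\log(1+\log|y|))+\varsigma y_d/|y|^2$ (built from Lemma \ref{l.logarithmic-barriers}), mollifies, and inverts the transform. It then runs the same sliding argument, now comparing logarithmic rates at infinity.
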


The quantification is the difficult part: properness gives flatness only at some large non-universal scale, and \tref{flat-exterior-original-coord} alone would produce constants depending on the solution. To control the scale we construct explicit sub- and supersolution barriers for the cylindrical model problem \eref{barrier-eqn} with the correct far-field expansion \eref{barrier-expansion-3d}--\eref{barrier-expansion-2d}, see \pref{barrier-prop}. We perform a nearly-linear patching construction for small defects and two genuinely nonlinear constructions: a supersolution in all $d \geq 2$ and a subsolution in $d \geq 3$. This kind of asymmetry between the sub and supersolution conditions is typical in the Bernoulli one-phase problem.  Whether nonlinear-regime subsolutions exist in $d=2$ remains an interesting open question. 

\subsection{Comparison with previous literature}

The study of far field behavior of solutions of the single site problem \eqref{e.single-site} is related to the ``exterior problem" for Bernoulli free boundaries. Our usage of hodograph transform in order to study foliating families of solutions indexed via far-field expansions are reminiscent of some works on solutions near singular cones \cites{engelstein2025asymptotic,de2022inhomogeneous,EngelsteinSpolaorVelichkov}, see \cite{engelstein2025asymptotic} for more discussion of the literature including related literature in the minimal surface theory. There is significant literature studying the far-field behavior of exterior solutions for quasi-linear elliptic equations, especially for graphical minimal surfaces, see for example \cite{Bers,Simon,Schoen}.  A recent work \cite{FR}, appearing after the original version of this paper was posted in \cite{FKoriginal}, presents a nice new proof of the far field behavior via an improvement of flatness in annuli argument.  There are significant differences with our work. Most works on exterior problems consider variational solutions and have access to monotonicity formulae, whereas we work with viscosity solutions. We also need uniform, quantitative estimates, achieved with explicit barriers, which we carry out in Sections \ref{s.flat-asymptotics} and \ref{s.general-asymptotics}. In addition our two-dimensional results address logarithmic tail behavior, which appear to be new in the study of Bernoulli free boundaries.

\subsection{Outline of the paper} Section \ref{s.prelim-exterior} fixes the setting and collects preliminary results: the sliding comparison principle, blow-down analysis of one-sided flat exterior solutions including a removable singularity lemma and strong maximum principles, flat regularity theory, the partial hodograph transform, and Harnack inequalities in both coordinate systems. Section \ref{s.flat-asymptotics} proves \tref{flat-exterior} (and with it Theorem \ref{t.main-flat-exterior}) via barriers, quantitative linearization, and the Kelvin transform. Section \ref{s.general-asymptotics} constructs the barriers of \pref{barrier-prop} and proves \tref{asymptotic-expansion-at-infty}. 

\subsection{Acknowledgments}
W.F.'s research is partially supported by NSF DMS-2407235. Part of this work was completed during W.F.'s visit at the ESI, he thanks the ESI for hosting him. I.K.'s research is partially supported by NSF DMS 2452649. Part of this work was completed during I.K's visit at KIAS, and she thanks KIAS's hospitality.

\subsection{Declaration of AI use} The original version of this paper \cite{FKoriginal} was written without AI assistance. For this revised version we used AI tools only for editing and presentation ideas in the introduction.

    \section{Setting and preliminary results}\label{s.prelim-exterior}
    {In this section we introduce the settings we will work in throughout the paper. We will recall several results from the literature which will be useful later. }

    \subsection{Bernoulli free boundary problems and solution notions}

We will consider continuous viscosity solutions $u \in C(U)$, $u \geq 0$, of the Bernoulli free boundary problem in a domain $U \subset \R^d$ with a continuous and positive coefficient field $Q(x)$
\begin{equation}\label{e.bernoulli-basic-prelim}
     \begin{cases}
         \Delta u = 0 & \hbox{in } \{u>0\} \cap U \\
         |\grad u| = Q(x) &\hbox{on } \partial \{u>0\} \cap U. 
     \end{cases}
\end{equation}
In many cases we will be able to reduce to studying a homogeneous problem
\begin{equation}\label{e.bernoulli}
    \begin{cases}
        \Delta u = 0 & \hbox{ in } \{u>0\} \cap U\\
        |\grad u| = 1 &\hbox{ on } \partial \{u>0\} \cap U.
    \end{cases}
    \end{equation}
 Background on the definition and basic theory of viscosity solutions can be found, for example, in \cites{CS,feldman2021limit} and in \cite{FKdilute}*{Section 2}.  We will also sometimes consider classical solutions of \eref{bernoulli-basic-prelim}, $u$ is a classical solution if $u \in C^1(\overline{\{u>0\}} \cap U) \cap C^2(\{u>0\} \cap U)$.  Of course both viscosity and classical solutions are smooth $\{u>0\} \cap U$ since they are harmonic in that open set.

    \subsection{Sliding comparison} 
The standard comparison principle in bounded domains does not hold for solutions of \eqref{e.bernoulli-basic-prelim}. Pinning on heterogeneities is one source of such non-uniqueness, but non-uniqueness even occurs in the homogeneous problem \eref{bernoulli}. Instead, we will often use the following {sliding comparison principle}, where we compare a supersolution $u$  with a continuously varying family of regular subsolutions $v_t$ from below. We can also do similar if $u$ is a subsolution and $v_t$ is a continuously varying family of supersolutions.  Note that $u$ does not need to be regular, only the sliding family.

\begin{lemma} [Sliding comparison \cite{CS}*{Theorem 2.2}]\label{l.sliding-comparison}
Let $u \in C(\bar{U})$ be a viscosity supersolution of \eqref{e.bernoulli-basic-prelim}.  Let $v_0 \in C(\bar{U})$ satisfy the following:
\begin{enumerate}[label = (\roman*)]
	\item $v_t(x):=v_0(x+te_d)$ are classical subsolutions of \eqref{e.bernoulli-basic-prelim} for all $t \in [0,T]$;
    \item $v_0\leq u$ in $U$;
    \item $v_t \leq u$ on $\partial U$ and $v_t <u$ in $\overline{\{v (x+te_d)>0\}} \cap \partial U$ for $0\leq t\leq T$.
\end{enumerate}
    Then $v_t \leq u$ in $U$  for  $0\leq t\leq T$.
\end{lemma}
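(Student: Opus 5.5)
We argue by continuity in $t$. Set
\[
I := \{t \in [0,T] : v_s \leq u \hbox{ in } U \hbox{ for all } s \in [0,t]\}.
\]
By (ii), $0 \in I$. The set $I$ is closed, because $v_t$ depends continuously on $t$ (locally uniformly, by continuity of $v_0$ on $\bar U$) and the inequality $v_t \leq u$ passes to limits. It remains to show that $I$ is relatively open in $[0,T]$; then $I = [0,T]$ by connectedness.

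\textbf{Step 1: a contact point forces a touching.} Suppose $t_* = \sup I < T$. Then $v_{t_*} \leq u$ but, for $t_n \downarrow t_*$, $v_{t_n}$ fails to lie below $u$. First we argue that $v_{t_*}$ must in fact touch $u$ somewhere. One complication is that $U$ may be unbounded, as in the applications to exterior domains; there we will add a hypothesis on behavior at infinity, or reduce to bounded $U$, so that compactness is available. Assume this, and suppose $v_{t_*}$ were strictly separated from $u$ on $\overline{\{v_{t_*}>0\}} \cap \bar U$, with a positive gap by compactness and continuity. Then uniform continuity gives $v_t < u$ on $\overline{\{v_t>0\}}$ for $t$ near $t_*$, while $v_t = 0 \leq u$ elsewhere. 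This contradicts the choice of $t_*$. Hence there is a point $x_0 \in \overline{\{v_{t_*}>0\}} \cap \bar U$ with $v_{t_*}(x_0) = u(x_0)$. By hypothesis (iii), $x_0 \notin \partial U$, so $x_0 \in U$.

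\textbf{Step 2: ruling out touching.} Two cases arise.

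\emph{Interior case.} If $x_0 \in \{v_{t_*}>0\}$, then $u(x_0) > 0$. On the component of $\{v_{t_*}>0\}$ containing $x_0$, $v_{t_*}$ is subharmonic and $u$ is superharmonic. The strong maximum principle forces $u \equiv v_{t_*}$ on that component. Pushing this equality to the free boundary of $v_{t_*}$ inside $U$ reduces matters to the free boundary case. If instead the component reaches $\partial U$, we contradict the strict inequality in (iii).

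\emph{Free boundary case.} If $x_0 \in \partial\{v_{t_*}>0\}$, then $u(x_0) = 0$, so $x_0 \in \partial\{u>0\}$, and $v_{t_*}$ touches $u$ from below there. Since $v_{t_*}$ is a classical subsolution, it has a $C^1$ free boundary with $|\nabla v_{t_*}^+| \geq Q$. The usual definition of strict classical subsolution, or perturbing $v$ to $(1+\e)v$ where this keeps the subsolution property, gives $|\nabla v_{t_*}^+(x_0)| > Q(x_0)$. This is then a valid test function touching the supersolution $u$ from below at a free boundary point. The viscosity supersolution condition yields $|\nabla v^+| \leq Q(x_0)$, a contradiction.

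\textbf{Main obstacle.} The delicate point is this free boundary touching step. With non-strict classical subsolutions, one must either perturb the family to make it strict (for example a small dilation or multiplicative factor, then letting $\e \to 0$) or invoke the Hopf lemma applied to $u - v_{t_*}$ in $\{v_{t_*}>0\}$ near $x_0$. Since $u - v_{t_*} \geq 0$ is superharmonic there and vanishes at $x_0$, Hopf's lemma gives a strictly larger normal derivative for $u$ unless $u \equiv v_{t_*}$. In that case equality propagates to $\partial U$ and contradicts (iii). I would follow the argument of \cite{CS}*{Theorem 2.2}, where exactly this strictness/Hopf issue is handled.
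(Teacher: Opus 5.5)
Your first-contact argument is the standard one, and it is the argument behind the cited \cite{CS} result; the paper gives no proof of its own. Reducing to bounded $U$ is also how the paper uses the lemma, via balls $B_R$ with ordering at infinity. In the setting of that reference, where the sliding family consists of strict comparison subsolutions that are $C^2$ up to a $C^2$ free boundary, your Steps 1 and 2 are complete. At a free boundary contact point, the interior ball of $\{v_{t_*}>0\}\subset\{u>0\}$ together with $u\ge v_{t_*}$ gives $u$ a lower linear expansion with slope $>Q(x_0)$, which the supersolution condition forbids.

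The weak point is the non-strict case, which is what the lemma literally states ("classical subsolutions"). One of your two remedies there fails.
\begin{itemize}
\item \emph{The perturbation is circular.} Replacing $v_t$ by $(1+\varepsilon)v_t$ does give strict subsolutions, but the new family must satisfy (ii), i.e. $(1+\varepsilon)v_0\le u$, and this does not follow from $v_0\le u$. By compactness it would follow from strict separation $v_0<u$ on $\overline{\{v_0>0\}}\cap\bar U$. That separation is exactly the no-contact statement, now at $t=0$, which the free boundary step is supposed to prove.
\item \emph{Hopf is the right route but needs more regularity.} It requires an interior ball for $\{v_{t_*}>0\}$ at $x_0$. $C^1$ regularity of the free boundary, which is the paper's notion of classical, does not provide this, and Hopf's lemma can fail at points of merely $C^1$ boundaries. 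Similarly, a $v_{t_*}$ that is only $C^1$ up to its free boundary is not an admissible $C^2$ test function, so your sentence ``this is then a valid test function'' also needs the $C^2$ assumption.
\end{itemize}

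Here is how the Hopf route goes once the free boundary of $v_{t_*}$ is $C^2$, with a touching ball $B_\rho(c_0)\subset\{v_{t_*}>0\}$ meeting $\partial\{v_{t_*}>0\}$ only at $x_0$.
\begin{enumerate}
\item If $u-v_{t_*}>0$ in the adjacent component, Hopf gives $u-v_{t_*}\ge c(\rho-|x-c_0|)$ in $B_\rho(c_0)$ near $x_0$.
\item Since $v_{t_*}\in C^1$ up to the boundary with $|\nabla v_{t_*}(x_0)|\ge Q(x_0)$, this yields $u\ge (Q(x_0)+c-o(1))(\rho-|x-c_0|)$ there.
\item Hence $\varphi(x)=(Q(x_0)+c/2)(\rho-|x-c_0|)$ touches $u$ from below at $x_0$ with $|\nabla\varphi(x_0)|>Q(x_0)$, a contradiction.
\end{enumerate}
Note that $u$ itself need not have a normal derivative at $x_0$, so the Hopf conclusion should be phrased as this lower expansion, as above.

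Finally, your claim that equality ``propagates to $\partial U$'' needs one more line. Suppose a component $\Omega'$ of $\{v_{t_*}>0\}\cap U$ had closure disjoint from $\partial U$. Then $\Omega'$ would be a bounded open set on which $v_{t_*}$ is subharmonic, positive, and zero on $\partial\Omega'$, which the maximum principle rules out. So $u\equiv v_{t_*}$ on $\Omega'$ forces equality at a point of $\overline{\{v_{t_*}>0\}}\cap\partial U$, contradicting (iii). This closes both your interior case and the equality alternative in the Hopf step.
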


We will often apply the sliding comparison in the whole space by ensuring the boundary ordering property holds ``at $\infty$", that is on the boundary of all sufficiently large radius balls.

    \subsection{$C^{1,\alpha}$ regularity for flat solutions} General viscosity solutions of \eref{bernoulli} may not be regular. However, sufficiently flat solutions are indeed classical in a slightly smaller domain.

\begin{theorem}[Caffarelli]\label{t.flat-implies-c1alpha}
     For any $\alpha \in (0,1)$ there is $\eta_0(\alpha,d)>0$ and $C(d) \geq 1$ so that the following holds. If $u$ is a viscosity solution of \eref{bernoulli} in $B_1$ and
        \[ (x_d)_+ \leq u(x) \leq (x_d+\eta)_+  \ \hbox{ in } \ B_1  \ \hbox{ with } \ \eta \leq \eta_0
        \]
        then $u \in C^{1,\alpha}(\overline{\{u>0\}} \cap B_{1/2})$ and
        \[\sup_{\overline{\{u>0\}} \cap  B_{1/2}}|\grad u - e_d| \leq C\eta.\]
\end{theorem}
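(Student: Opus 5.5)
This is Caffarelli's flatness-implies-regularity theorem, and I would follow his improvement-of-flatness scheme in the viscosity framework, i.e.\ the "Harnack + compactness + linearization" route made popular by De Silva.

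\textbf{Step 1: a Harnack inequality for flat solutions.} Suppose $(x_d + a)_+ \le u \le (x_d + b)_+$ in $B_r(x_0)$ with $b-a = \epsilon r$ and $\epsilon$ small. I would show that either the lower or the upper bound improves in $B_{r/2}(x_0)$, giving $b'-a' \le (1-c)\epsilon r$.

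To prove this, evaluate $u$ at an interior point $\bar x$ where $x_d \approx r/4$, where $u$ is harmonic. Suppose $u(\bar x) \ge \bar x_d + a + \epsilon r/2$. Then the interior Harnack inequality applied to $u - (x_d + a) \ge 0$ raises $u$ on a ball around $\bar x$. Next, slide a family of classical strict subsolutions from below using \lref{sliding-comparison}, with $v_t(x) = (x_d + a + c\epsilon\varphi(x) + t)_+$. Here $\varphi$ is a radial bump of the standard form $|x-\bar x|^{-\gamma} - c$, chosen so that $\Delta v_t > 0$ and $|\nabla v_t| > 1$ where $\varphi > 0$. The boundary ordering in \lref{sliding-comparison} is checked on $\partial B_r$, where the perturbation is negative, and on the small ball around $\bar x$. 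The conclusion is that the lower bound lifts by $c\epsilon r$. The opposite case is handled symmetrically, sliding supersolutions from above.

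\textbf{Step 2: compactness.} Iterating Step 1 gives H\"older-type oscillation control of $\tilde u_\epsilon = (u - x_d)/\epsilon$ on $\overline{\{u>0\}}$, down to scale $\epsilon/\eta_0$. By Ascoli, a sequence of solutions with $\epsilon \to 0$ has $\tilde u_\epsilon$ converging (along graphs) to a limit $w$ on $B_{1/2} \cap \{x_d \ge 0\}$. I would then show, by touching with test polynomials and perturbing them into classical comparison functions, that $w$ is a viscosity solution of the linearized problem $\Delta w = 0$ in $\{x_d > 0\}$ with $\partial_d w = 0$ on $\{x_d = 0\}$. By even reflection, $w$ is harmonic and hence smooth, with $|w(x) - w(0) - \nabla' w(0)\cdot x'| \le C|x|^2$.

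\textbf{Step 3: improvement of flatness.} By contradiction using Step 2, there are $\rho$ and $\eta_0$ such that if $u$ is $\epsilon$-flat in direction $e$ in $B_1$ with $\epsilon \le \eta_0$, then $u$ is $\rho^{1+\alpha}\epsilon$-flat in $B_\rho$ in a new direction $e'$ with $|e - e'| \le C\epsilon$. The tilt comes from $\nabla' w(0)$, and the key estimate is $C\rho^2 \le \rho^{1+\alpha}/2$. Iterating at every free boundary point in $B_{1/2}$, the directions converge geometrically. This shows the free boundary is a $C^{1,\alpha}$ graph and $u$ is $C^{1,\alpha}$ up to it, with $|\nu(x) - e_d| \le C\eta$. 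Interior and up-to-boundary Schauder estimates for harmonic functions with $C^{1,\alpha}$ boundary data then give $|\nabla u - e_d| \le C\eta$ on $\overline{\{u>0\}} \cap B_{1/2}$. Near the boundary one compares with the affine function $x\cdot e' + \text{const}$ in the rescaled flat picture; in the interior, flatness of order $\eta$ controls $\nabla(u - x_d)$ by harmonic estimates.

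\textbf{Main obstacle.} The delicate step is Step 1: building the strict subsolution bump so that the free boundary condition $|\nabla v| > 1$ holds exactly where $v$'s free boundary moves. The fix is to keep the perturbation of size $\epsilon$ with a slope gain of order $c\epsilon$, exceeding $1$. The linearization in Step 2 (the Neumann condition) also needs care, but the test-function argument there is standard.
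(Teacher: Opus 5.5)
The paper gives no proof of this theorem. It is quoted as Caffarelli's flatness result (see \cite{CS}) and used as a black box, e.g.\ in \lref{blow-down-grad} and in Step 1 of the proof of \tref{asymptotic-expansion-at-infty}. What you outline is De Silva's argument: a one-step Harnack inequality for flat solutions, compactness of $(u-x_d)/\epsilon$, linearization to $\Delta w=0$, $\partial_d w=0$, and improvement of flatness. This is a correct and by now standard alternative proof.

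Caffarelli's original route has two stages, flat $\Rightarrow$ Lipschitz $\Rightarrow$ $C^{1,\alpha}$. Flatness is first upgraded to monotonicity of $u$ in a cone of directions. That cone is then opened up scale by scale, using sup-convolution deformations $\sup_{B_{\epsilon\varphi(x)}(x)}u$ as comparison subsolutions. That route is longer, but it produces the Lipschitz-to-$C^{1,\alpha}$ theory along the way and was built for two-phase problems. Yours is shorter, works directly with viscosity solutions, and adapts readily to a H\"older coefficient $Q(x)$ or a right-hand side. That is more than the paper needs, since the result is only applied to the homogeneous problem away from the defect.

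Some details of your Step 1 need tightening, though none is structural.

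\lref{sliding-comparison} is stated for translates $v_0(x+te_d)$, whereas your family $(x_d+a+c\epsilon\varphi+t)_+$ is shifted in value, not in $x$. The first-touching argument works verbatim for any continuous monotone family of strict classical subsolutions, so run it directly.

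The strict inequality $|\grad v_t|>1$ comes from $\partial_d\varphi>0$ below $\bar x$, where the free boundary of $v_t$ lies; it does not come from $\varphi>0$. In the supersolution case you also need $\partial_d\varphi\geq c|\grad\varphi|$ there, to absorb the $O(\epsilon^2)$ term in $|\grad v_t|^2$.

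If $\bar x$ sits exactly at height $r/4$ above $x_0$, the sphere $\partial B_{3r/4}(\bar x)$ touches both $\partial B_r(x_0)$ and $\partial B_{r/2}(x_0)$. Then no radial profile about $\bar x$ can be nonpositive on the former and uniformly positive on the latter. Height $r/5$ (De Silva's choice) leaves room.

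Finally, your iteration gives a gradient constant of order $(1-\rho^\alpha)^{-1}$, which depends on $\alpha$. To get $C=C(d)$ as stated, prove the gradient bound once with $\alpha=1/2$ and take $\eta_0(\alpha,d)\leq\eta_0(1/2,d)$.
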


For exterior solutions which blow down to a half-planar solution we can apply this regularity theory sufficiently large annuli.

\begin{lemma}\label{l.blow-down-grad}
    Suppose that $u$ solves \eref{bernoulli} in $\R^d \setminus B_1$ and blows down to $(x_d)_+$ as in \eref{s2-blow-down-cond}. Then there is $R_0>0$ depending on $u$ so that for any $\alpha \in (0,1)$
    \[u \in C^{1,\alpha}(\overline{\{u>0\}} \setminus B_{R_0})\]
    and for all $r \geq R_0$
     \[\sup_{\overline{\{u>0\}} \cap \partial B_r}|\grad u - e| \leq C\sup_{B_{2} \setminus B_{1/2}} |r^{-1}u(rx) - (x\cdot e)_+|.\]
\end{lemma}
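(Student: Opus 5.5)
The plan is to rescale to unit annuli and apply Caffarelli's flat regularity, \tref{flat-implies-c1alpha}, there. Fix $r$ large and set $u_r(x) = r^{-1}u(rx)$. This is again a viscosity solution of \eref{bernoulli}, now in $\R^d \setminus B_{1/r}$, and in particular in a neighborhood of $\overline{B_4 \setminus B_{1/4}}$ once $r \ge 8$. Put
\[\delta(r) := \sup_{B_2\setminus B_{1/2}} |u_r(x) - (x\cdot e)_+|.\]

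The first step is to show $\delta(r) \to 0$ as $r\to\infty$, and to do so on a slightly larger annulus, say $B_4\setminus B_{1/4}$. Condition \eref{s2-blow-down-cond} gives $|u_r - x\cdot e| \to 0$ locally uniformly on the positivity set. It remains to control $u_r$ where $x\cdot e \le 0$. On $\{u_r>0\}$ this comes directly from the convergence. Off that set $u_r = 0$. Both cases together give $|u_r - (x\cdot e)_+| \to 0$ uniformly on compact annuli. (We read the blow-down as $r\to\infty$, which is what ``blows down'' means here.)

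The second step applies the flat estimate at each point of the rescaled free boundary and positivity set in $\overline{B_{3/2}\setminus B_{3/4}}$. Take a ball $B_\rho(y)$ with $y$ in that annulus and $\rho = 1/8$, so that the ball lies in $B_2\setminus B_{1/2}$. On it we have
\[(x\cdot e - \delta)_+ \le u_r \le (x\cdot e+\delta)_+ .\]
After the translation $x\mapsto x - \delta e$ and the rescaling by $\rho$, this becomes the hypothesis of \tref{flat-implies-c1alpha} with $\eta = 2\delta/\rho$, which is at most $\eta_0$ once $r \ge R_0$. The theorem then gives $C^{1,\alpha}$ regularity on $B_{\rho/2}(y)\cap\overline{\{u_r>0\}}$ together with $|\nabla u_r - e| \le C\delta(r)$ there. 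Covering the annulus by finitely many such balls, with the number depending only on $d$, and noting $\nabla u_r(x) = \nabla u(rx)$, we obtain the gradient bound on $\partial B_r$, since $\partial B_1$ lies in the annulus. For the larger annulus we use the same argument with $\delta$ measured on $B_2\setminus B_{1/2}$ for the unit sphere; the stated estimate only needs the sup over $B_2\setminus B_{1/2}$ because $\partial B_1$ sits well inside it. Taking the union over $r\ge R_0$ of the scaled annular regions gives $u\in C^{1,\alpha}(\overline{\{u>0\}}\setminus B_{R_0})$, with local estimates.

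The main subtlety is the first step. Points where $u_r>0$ but $x\cdot e<0$, and points where $u_r=0$ but $x\cdot e>0$, need care, because \eref{s2-blow-down-cond} only controls $u_r$ on its own positivity set. The upper half is fine: if $u_r(x) = 0$ at some $x$ with $x\cdot e>0$, the local uniform convergence of $u_r - x\cdot e$ on $\{u_r>0\}$ at nearby positive points would force a jump, which continuity of $u_r$ rules out. So near such a point $u_r$ must be small while $x\cdot e$ is not, a contradiction. In the lower half, any positive value satisfies $u_r \le x\cdot e + o(1) \le o(1)$. Both directions therefore give one-sided closeness to $(x\cdot e)_+$ uniformly on compacts, which is exactly what flatness requires.

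Finally, the flat regularity needs the free boundary condition to hold in the whole ball $B_\rho(y)$. This is where we need $\rho$ small relative to the inner radius $1/r$ of the excluded ball. That holds for $r \ge R_0 \ge 8$.
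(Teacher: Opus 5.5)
The paper gives no proof of this lemma. It only remarks that the flat theory is applied on sufficiently large annuli, so rescaling to unit annuli and invoking \tref{flat-implies-c1alpha} is the intended route. The central step of your Step 2, however, does not follow as written. With $\delta=\sup_{B_2\setminus B_{1/2}}|u_r-(x\cdot e)_+|$ you assert $(x\cdot e-\delta)_+\le u_r\le (x\cdot e+\delta)_+$. The lower inequality is fine. The upper one says $u_r\equiv 0$ on $\{x\cdot e\le-\delta\}$, but sup-closeness to $(x\cdot e)_+$ only gives $u_r\le\delta$ there. That does not exclude a thin positive layer reaching well below the hyperplane, and the hypothesis of \tref{flat-implies-c1alpha} is exactly that vanishing.

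Properness does give vanishing, but only below $\{x\cdot e=-\varepsilon_r\}$. From \eref{s2-blow-down-cond}, $u_r\le(x\cdot e+\varepsilon_r)_+$ on compacts, with $\varepsilon_r\to 0$ unrelated to $\delta(r)$. So you need two steps:
\begin{enumerate}[label = (\arabic*)]
\item Apply \tref{flat-implies-c1alpha} at level $\eta\sim(\varepsilon_r+\delta)/\rho$. For $r\ge R_0$ this gives regularity and $|\grad u_r-e|\le 1/2$. Hence $\partial_e u_r\ge 1/2$ on the positivity set, and the free boundary is locally a graph in the direction $e$.
\item Locate the free boundary. If $z$ is a free boundary point with $z\cdot e=-t<0$, integrate along $z+se$, $0\le s\le t$. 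This gives $u_r(z+te)\ge t/2$ at a point where $(x\cdot e)_+=0$, so $t\le 2\delta$. A free boundary point with $z\cdot e=t>0$ gives $t\le\delta$ directly.
\end{enumerate}
It follows that $(x\cdot e-\delta)_+\le u_r\le (x\cdot e+3\delta)_+$. Only then does a second application of \tref{flat-implies-c1alpha} give the bound $C\delta(r)$ that the lemma actually claims.

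There are also smaller repairs to make.
\begin{itemize}
\item In Step 1, your jump argument only treats zero points on $\partial\{u_r>0\}$. For interior zero points, connectedness of $\{x\cdot e>\varepsilon\}\cap B_R$ only gives an alternative: either $u_r>0$ on all of it, or $u_r\equiv 0$ on all of it. Condition \eref{s2-blow-down-cond} alone does not exclude the second case, since $u\equiv 0$ satisfies it. So locally uniform convergence $u_r\to(x\cdot e)_+$ has to be read into the phrase ``blows down to $(x_d)_+$''. The paper does exactly this in Step 1 of the proof of \tref{asymptotic-expansion-at-infty}.
\item \tref{flat-implies-c1alpha} must be centered near the free boundary, for instance at points of $\{x\cdot e=\delta\}$. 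At points with $y\cdot e\gtrsim\rho$, a shift by $\delta e$ does not produce its hypothesis. Use interior gradient estimates for the harmonic function $u_r-x\cdot e$ there instead.
\item $\eta_0$ depends on $\alpha$, so your $R_0$ does too. To get a single $R_0$ for all $\alpha$, as stated, fix $\alpha=1/2$; the gradient constant $C(d)$ does not depend on $\alpha$. Then upgrade using the Kinderlehrer--Nirenberg higher regularity of $C^{1,\alpha}$ free boundaries.
\item Your closing remark is backwards. What is needed is that $B_\rho(y)$ avoid $B_{1/r}$, not that $\rho$ be small relative to $1/r$; this holds automatically for $r\ge 2$.
\end{itemize}
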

  
 \subsection{Partial hodograph transform }\label{s.hodograph}

Now we recall the partial hodograph transform. This transformation was first introduced as a tool to prove higher regularity of free boundaries by \cite{kinderlehrer1977regularity}. While requiring a $C^1$, planar-like solution as a starting point, this transformation has served as one of the main tools to obtain higher regularity of the free boundary. For us the transformation puts us in a PDE setting where we can apply classical higher regularity, Harnack, and Kelvin transform techniques to study the exterior asymptotics. It is also convenient for the construction of barriers, especially for $d=2$ case, where the logarithmic far-field growth of the free boundary makes it challenging to construct barriers in original coordinates.

  Let $u\in C^2(\{u>0\}) \cap C^1(\overline{\{u>0\}})$ be a classical solution of \eqref{e.bernoulli} in an open neighborhood $U$ of $x_0\in \partial\{u>0\}$.  We assume that $\partial_{x_d} u>0$  in $\overline{\{u>0\}} \cap U$.  We now define the new coordinates $y=(y', y_d)$ by 
\begin{equation}\label{hodo.t} 
y' := x', \ y_d = u(x),  \ \hbox{ and } \  v(y) := x_d -y_d.
\end{equation} 
Under our hypotheses on $u$ the coordinate transform defines a diffeomorphism of $\overline{\{u>0\}} \cap U$ onto its image, a set $\mathcal{N} \cap \{y_d \geq 0\}$.

We now derive the $y$-cordinate  PDE  in the domain $\{y_d\geq 0\}\cap\mathcal{N}$. Observe that 
$$
\nabla_{y'} y_d =0= \nabla_{y'} u + \partial_d u \nabla_{y'} x_d,
$$
and so
$$
\nabla_{y'} v = \nabla_{y'} x_d = (-\nabla_{x'} u) (\partial_d  u )^{-1}\hbox{
and }
\partial_{y_d} v = \partial_{y_d} x_d -1 = (\partial_d u) ^{-1}-1.
$$
  We also compute
$$
\sqrt{1+|\nabla'_y v|^2} = |Du|(\partial_d u)^{-1} = |Du|(1+\partial_{y_d} v) 
\,\,\hbox{ and }\,\, 
(\nabla v)'/(1+(\nabla v)_d) = -\nabla_{x'}u.
$$

Thus, the PDE in the Hodograph coordinates is:
\begin{equation}\label{e.hodo-PDE}
\begin{cases}
    \textup{tr}(A(\grad_y v)D^2_yv) =0 & \hbox{ in } \ \{y_d>0\} \cap \mathcal{N}; \\
    \partial_{y_d}v =N(\nabla'_y v)  :=\sqrt{1+|\grad_y 'v|^2}-1  & \hbox{ on } \partial \{y_d>0\} \cap \mathcal{N},
\end{cases}
\end{equation}
 where
\begin{equation}\label{e.hodograph-PDE-A-formula}
    A(p) = \left[\begin{array}{cc}
    \textup{I}_{d-1} & (1+p_d)^{-1}p' \\
    (1+p_d)^{-1}(p')^T & \frac{1+|p'|^2}{(1+p_d)^2}
\end{array}\right].
\end{equation}
\begin{remark}
    Note that $A(p)$ is elliptic if $|p|<1$ and satisfies
        \begin{equation}\label{e.A-property}
        \|A(p)-I\| \leq C|p| \ \hbox{ for } \ |p| \leq 1/2
    \end{equation}
    and
       \begin{equation}\label{e.N-property}
        |N(p')| \leq C|p'|^2 \ \hbox{ for } \ |p'| \ll 1.
    \end{equation}
    These are the main properties of $A$ and $N$ that we will use later.
\end{remark}

    \subsection{Regularity and Harnack for nonlinear oblique boundary value problems}

    We use a Harnack inequality and higher regularity estimates for the nonlinear Neumann problem \eref{hodo-PDE}. In the contexts where we use these results, our solutions $v$ of \eref{hodo-PDE} will be at least $C^{1,\alpha}$ with $|\grad v| \leq \eta \ll 1$. Therefore we can view \eref{hodo-PDE} as a linear uniformly elliptic PDE with a linear uniformly oblique boundary condition with measurable coefficients.

    Applying Lieberman's Harnack inequality \cite{Lieberman}*{Theorem 3.3} for linear uniformly elliptic PDE with uniformly oblique boundary condition and measurable coefficients, we arrive at the following Harnack inequality for the Hodograph PDE \eref{hodo-PDE}.
    \begin{theorem}[Corollary of Lieberman's Harnack inequality \cite{Lieberman}*{Theorem 3.3}]\label{t.hodograph-harnack}
    For any $\eta < 1$ there is a constant $C(d,\eta)>1$ such that for any non-negative $C^{1}$ solution of \eref{hodo-PDE} in $B_{1}^+ = B_1 \cap \{y_d>0\}$ called $v$ with $|\grad v|\leq \eta < 1$,
    \[\sup_{B_{1/2}^+} v \leq C \inf_{B_{1/2}^+} v.\]  
    \end{theorem}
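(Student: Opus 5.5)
The plan is to rewrite \eref{hodo-PDE} as a linear problem with measurable coefficients and then quote Lieberman's Harnack inequality \cite{Lieberman}*{Theorem 3.3}. Linearity comes from freezing coefficients along the solution. Since $v \in C^1$ with $|\grad v| \leq \eta < 1$, set $a_{ij}(y) := A_{ij}(\grad v(y))$. The interior equation then reads $\textup{tr}(a(y) D^2 v) = 0$ in $B_1^+$, with $a$ bounded and measurable.

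The first step is uniform ellipticity of $a$ with constants depending only on $(d,\eta)$. From \eref{hodograph-PDE-A-formula}, $A(p) = M M^T$ where $M$ is the Jacobian-type matrix with rows $e_i$ ($i<d$) and last row $(1+p_d)^{-1}(p',1)$. More directly, for $\xi = (\xi',\xi_d)$,
\[ \xi^T A(p)\xi = |\xi' + (1+p_d)^{-1}\xi_d p'|^2 + (1+p_d)^{-2}\xi_d^2 . \]
This is bounded above and below by multiples of $|\xi|^2$ depending only on $\eta$, since $1+p_d \geq 1-\eta > 0$ and $|p'| \leq \eta$.

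The second step is to write the boundary condition as a linear oblique condition. Since $N(0)=0$ and $N$ is smooth, $N(p') = \sqrt{1+|p'|^2}-1 = b(p')\cdot p'$ with $b(p') := p'/(\sqrt{1+|p'|^2}+1)$. Hence on $\{y_d=0\}$ we get $\partial_{y_d} v - \beta(y)\cdot \grad' v = 0$, where $\beta(y) = b(\grad' v(y))$. Here $|\beta| \leq |p'|/2 \leq \eta/2 < 1/2$. The conormal-type vector $(-\beta,1)$ is therefore uniformly oblique, making an angle with the normal bounded away from $\pi/2$, and its coefficients are bounded and measurable.

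Finally, I would apply \cite{Lieberman}*{Theorem 3.3} to the nonnegative solution $v$ of this linear problem in $B_1^+$. It gives $\sup_{B_{1/2}^+} v \leq C \inf_{B_{1/2}^+} v$ with $C$ depending only on $d$, the ellipticity constants and the obliqueness bound, that is, on $(d,\eta)$. The main obstacle I expect is matching Lieberman's precise hypotheses. The concern is the notion of solution required there: strong $W^{2,d}_{loc}$ solutions with the boundary condition holding in a suitable sense, and possibly a non-divergence form with the oblique condition on a flat portion of the boundary. Interior smoothness of $v$ is available because $v$ solves a quasilinear elliptic equation with smooth structure inside. Together with $C^1$ up to the flat boundary, this should meet those requirements, so I would carefully check that classical $C^1(\overline{B_1^+}) \cap C^2(B_1^+)$ suffices.
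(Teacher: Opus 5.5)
Your proposal is correct and takes the same route as the paper: the paper also freezes the coefficients along $\grad v$, treating \eref{hodo-PDE} as a linear uniformly elliptic equation with bounded measurable coefficients and a linear uniformly oblique boundary condition, and then quotes \cite{Lieberman}*{Theorem 3.3}. Your factorization $A(p)=MM^T$ and the identity $N(p')=b(p')\cdot p'$ with $|b|\le |p'|/2$ supply the ellipticity and obliqueness checks the paper leaves implicit, and your regularity caveat matches the paper's remark that in its applications $v$ is smooth in the interior and $C^{1,\alpha}$ up to the flat boundary.
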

    
We also need elliptic regularity type estimates up to second order with the correct scaling in large balls. The qualitative $C^\infty$ regularity of $C^{1,\alpha}$ solutions of \eref{hodo-PDE} was proved in the original paper applying hodograph techniques by Kinderlehrer and Nirenberg \cite{kinderlehrer1977regularity}*{p. 386}.  Then we can apply Lieberman and Trudinger's \cite{LiebermanTrudinger} a-priori estimates for $C^2$ solutions of nonlinear uniformly elliptic problems with nonlinear oblique boundary conditions.  
\begin{theorem}[See \cite{LiebermanTrudinger}*{Theorem 1.1}]\label{t.hododgraph-regularity}
    If $v$ is a $C^{2}$ solution of \eref{hodo-PDE} in $B_r^+$ with $|\grad v| \leq \eta < 1$ then
    \[r^k|\grad^k v(0)| \leq C\osc_{B_r^+} v \ \hbox{ for } \ k = 1,2.\]
\end{theorem}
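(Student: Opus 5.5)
The statement to be proved is \tref{hododgraph-regularity}: for a $C^2$ solution $v$ of \eref{hodo-PDE} in $B_r^+$ with $|\grad v|\le \eta<1$, we want $r^k|\grad^k v(0)|\le C\osc_{B_r^+} v$ for $k=1,2$. The plan is to reduce to the unit scale by scaling, get a $C^{1,\alpha}$ estimate from the linear theory with measurable coefficients, and then bootstrap to second derivatives using Lieberman--Trudinger.

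\emph{Scaling and normalization.} Both the operator $\textup{tr}(A(\grad v)D^2v)$ and the boundary condition $\partial_{y_d}v=N(\grad' v)$ depend only on $\grad v$ and $D^2 v$, and they are invariant under $v_r(z):=r^{-1}v(rz)$. Since $\grad v_r(z)=\grad v(rz)$, we still have $|\grad v_r|\le\eta$. The equation is also invariant under adding constants, so we may replace $v$ by $w:=v_r - v_r(0)$. It therefore suffices to show
\[|\grad w(0)|+|D^2 w(0)|\le C\osc_{B_1^+} w .\]
Set $\theta:=\osc_{B_1^+}w$.

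\emph{Step 1: $C^{1,\alpha}$ in terms of the oscillation.} We view the equation as linear: $a^{ij}(y)\partial_{ij}w=0$ with $a^{ij}=A(\grad w(y))$, which is uniformly elliptic with constants depending on $\eta$. For the boundary condition, write $N(p')=b(p')\cdot p'$ with $b(p')=\int_0^1\grad N(tp')\,dt$, which is bounded for $|p'|\le\eta$. This gives the linear oblique condition $\partial_d w-b(y)\cdot\grad' w=0$, uniformly oblique with measurable coefficients. Lieberman's theory for such problems (the Hölder estimate accompanying \tref{hodograph-harnack}) gives $[w]_{C^\beta(B_{3/4}^+)}\le C\theta$.

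To upgrade to gradient Hölder bounds we differentiate tangentially. In the $C^2$ class, the difference quotients in the $y_i$ directions, $i<d$, solve a linear equation of the same type with oblique boundary condition $\partial_d w_i=\partial_kN\,\partial_k w_i$. Applying the same Hölder/Harnack estimates to them gives $\grad' w\in C^\beta$. The normal derivative $\partial_d w$ is then recovered from the interior equation, since $a^{dd}\ge c>0$ lets us solve for $\partial_{dd}w$.

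This step is the main obstacle. The clean route I would take first is to invoke \cite{LiebermanTrudinger}*{Theorem 1.1} directly: it yields $\|w\|_{C^{2,\alpha}(B_{1/2}^+)}\le C(\eta,d)$, and this bound is uniform because $|\grad w|\le\eta$.

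\emph{Step 2: linear dependence on $\theta$.} A uniform bound $C(\eta,d)$ is not yet proportional to $\theta$. Once $w\in C^{2,\alpha}$ with uniform bounds, the coefficients $a^{ij}=A(\grad w)$ and $b$ are $C^{\alpha}$ with uniform norms. The problem is then a linear equation with Hölder coefficients and a linear oblique boundary condition of Schauder type. Linear Schauder estimates for oblique problems, e.g.\ Lieberman's, give
\[\|w\|_{C^{2,\alpha}(B_{1/4}^+)}\le C\|w\|_{L^\infty(B_{1/2}^+)}\le C\theta ,\]
where the last inequality uses $w(0)=0$. Evaluating at $0$ and undoing the scaling gives the claim for $k=1,2$, with $0$ on the flat boundary. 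Interior points are handled by the same argument with interior Schauder estimates.
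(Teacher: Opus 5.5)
Your proof is correct and is essentially the paper's argument: the paper only invokes \cite{LiebermanTrudinger}*{Theorem 1.1} together with the invariance of \eref{hodo-PDE} under hyperbolic rescaling, and that is your normalization plus Step 1. Your Step 2 (freezing $A(\grad w)$ and $b(\grad' w)$ as coefficients of a linear oblique problem and applying Schauder estimates with $w(0)=0$) is a useful addition, because it makes explicit the linear dependence on $\osc_{B_r^+} v$ that the citation leaves implicit; two small points: the boundary $C^{2,\alpha}$ estimate needs $b(\grad' w)\in C^{1,\alpha}$, which your uniform $C^{2,\alpha}$ bound does supply, and the tangential-differentiation sketch you set aside would not close by itself, since differentiating the nondivergence equation produces a drift whose coefficient involves $D^2 w$.
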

Note that the nonlinear problem \eref{hodo-PDE} is invariant under hyperbolic rescaling $v \mapsto rv(\cdot/r)$, which is how we are applying \cite{LiebermanTrudinger}*{Theorem 1.1}.

\subsection{Harnack inequality for the Bernoulli problem}Sometimes it is convenient to have the Harnack inequality \tref{hodograph-harnack} directly available in the original coordinates.  By combining \tref{flat-implies-c1alpha} with the hodograph transform and \tref{hodograph-harnack} one can derive the following Harnack inequality for flat solutions of \eref{bernoulli}. 
    \begin{corollary}\label{c.harnack-bernoulli}
         Let $u$ solve \eref{bernoulli} in $B_1$. Then there is $\eta_0>0$ and $C \geq 1$ depending on dimension so that the following holds. If 
        \[ (x_d)_+ \leq u(x) \leq (x_d+\eta_0)_+  \ \hbox{ in } \ B_1 
        \]
        then
        \[\sup_{\{u>0\} \cap B_{1/2}} (u(x) - x_d) \leq C \inf_{\{u>0\} \cap B_{1/2}} (u(x) - x_d).\]
        Similarly if
        \[ (x_d-\eta_0)_+ \leq u(x) \leq (x_d)_+  \ \hbox{ in } \ B_1 
        \]
        then
        \[\sup_{\{u>0\} \cap B_{1/2}} (x_d - u(x)) \leq C \inf_{\{u>0\} \cap B_{1/2}} (x_d - u(x)).\]
    \end{corollary}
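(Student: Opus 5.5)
We combine the flat regularity theory (\tref{flat-implies-c1alpha}), the partial hodograph transform of \sref{hodograph}, and Lieberman's Harnack inequality (\tref{hodograph-harnack}). We treat the first case, $(x_d)_+ \leq u \leq (x_d+\eta_0)_+$; the second is symmetric, with the roles of $u$ and the plane exchanged.

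\textbf{Step 1: gradient flatness.} Choose $\eta_0$ smaller than the constant of \tref{flat-implies-c1alpha}, say for $\alpha = 1/2$. Then $u \in C^{1,\alpha}(\overline{\{u>0\}}\cap B_{1/2})$ and
\[
|\grad u - e_d| \leq C\eta_0 \quad \hbox{on } \overline{\{u>0\}}\cap B_{1/2}.
\]
Shrinking $\eta_0$ further gives $\partial_{x_d} u \geq 1/2$ there. Since the hypothesis only yields $B_{1/2}$, we first run this step in $B_{3/4}$: apply \tref{flat-implies-c1alpha} at a slightly smaller scale, or cover by translated and rescaled balls, which is legitimate because the flatness hypothesis is inherited by smaller balls after rescaling, with $\eta$ replaced by $\eta/r$. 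This yields regularity in a neighborhood strictly containing $\overline{B_{1/2}}$.

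\textbf{Step 2: hodograph transform.} Define $y' = x'$, $y_d = u(x)$ and $v(y) = x_d - y_d$. The function $v$ solves \eref{hodo-PDE} on the image $\mathcal N\cap\{y_d\ge 0\}$ with $|\grad v| \le C\eta_0 < 1$.

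The key observation is the sign of $v$. At fixed $x'$, the map $x_d \mapsto u$ is increasing, and $u \geq x_d$, so the point $x_d$ with $u(x', x_d) = y_d$ satisfies $x_d \le y_d$. Hence $v \le 0$. Likewise $u \le x_d + \eta_0$ gives $v \ge -\eta_0$.

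The quantity we need is related to $v$ by
\[
u(x) - x_d = -v(y),
\]
evaluated at corresponding points, since $u(x) - x_d = y_d - x_d = -v$. Thus $w := -v \geq 0$. The problem \eref{hodo-PDE} is invariant under $v \mapsto -v$ up to changing $A(p)$ to $A(-p)$ and $N$ to $-N$. Since we view the problem as linear with measurable coefficients and a uniformly oblique boundary condition, which is how \tref{hodograph-harnack} is justified, $w$ is a nonnegative solution of an equation in the same class.

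\textbf{Step 3: Harnack and return to $x$-coordinates.} The transform is bi-Lipschitz with constants close to $1$, since $|y - x| \le \eta_0$ pointwise. Therefore the image of $\{u>0\}\cap B_{1/2+\delta}$ contains a half ball $B^+_{\rho}(z)$ around each relevant point. Conversely, the image of $\{u>0\}\cap B_{1/2}$ is contained in a finite union of half balls $B^+_{\rho/2}(z_i)$ ($y_d$-boundary centered) and interior balls on which the Harnack inequality applies. Here $\rho$ and the number of balls depend only on $d$.

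Applying \tref{hodograph-harnack} to $w$ on each ball, rescaled (the equation is scale invariant), and chaining over overlapping balls gives $\sup w \leq C \inf w$ over the image of $\{u>0\}\cap B_{1/2}$. Translating back via $u(x) - x_d = w(y)$ gives the claim.

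\textbf{Main obstacle.} The delicate points are the geometric bookkeeping in Steps 1 and 3. We must ensure the regularity and the hodograph domain extend slightly beyond $B_{1/2}$, so that the Harnack chain reaches all of $\{u>0\}\cap B_{1/2}$, including points near $\partial B_{1/2}$ and near the free boundary. We handle this with the covering and rescaling argument above, using that $\eta_0$ is small relative to the buffer width.
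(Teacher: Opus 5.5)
Your proposal is correct and follows the route the paper indicates. The paper gives no detailed proof, only the remark that the corollary follows by combining \tref{flat-implies-c1alpha}, the partial hodograph transform, and \tref{hodograph-harnack}. Your observation that $w=-v=u-x_d\geq 0$ solves a problem in the same linear, uniformly oblique class, together with the covering-and-chaining bookkeeping, supplies exactly the details the paper leaves implicit.
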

    We do not directly use this result in the present paper, but we record it here because it is a useful statement which we have already made use of in the companion paper \cite{FKdilute}.

\section{Close to planar exterior solutions}\label{s.flat-asymptotics}

In this section we analyze the asymptotic expansion of one-sided flat exterior solutions, i.e. of solutions to \eqref{e.bernoulli} in $U:= \R^d\setminus B_1$ with the property $|\grad u - e_d| \ll 1$ in $\{u>0\}$.

The following result will be applied in Section~\ref{s.general-asymptotics}  to our original solutions in the region that are away from the defects.

\begin{theorem}\label{t.flat-exterior-original-coord}
    Let $u$ be a classical $C^1$ solution of \eref{bernoulli} in $\R^d \setminus B_1$ such that $u(x) - x_d$ is bounded from above or from below in $\overline{\{u>0\}} \setminus B_1$.  If in addition
    \[\sup_{\overline{\{u>0\}} \setminus B_1} |\grad u - e_d| \leq \eta_1 \quad \hbox{ for } \,\, 0 < \eta_1(d) <1/2, \]  
    then the following holds.
    \begin{enumerate}[label = (\roman*)]
        \item For $ d \geq 3$, there is $C(d) \geq 1$ and $s, k \in \R$ so that
        \[\left|u(x) - (x_d+s  + k |x|^{2-d})\right| \leq  C|x|^{1-d}\osc_{(B_2 \setminus B_1) \cap \{u>0\}} (u(x) - x_d)\]
        and 
        \[ |k| \leq C\osc_{(B_2 \setminus B_1) \cap \{u>0\}}(u(x) - x_d).\]
        \item For  $d = 2$, there is $C \geq 1$ and $k \in \R$ so that
        \[\left|u(x) -(x_d +  k \log |x|)\right| \leq  C\max_{(B_2 \setminus B_1) \cap \{u>0\}}|u(x) - x_d|\]
        and
        \[ |k| \leq C\sup_{(B_2 \setminus B_1) \cap \{u>0\}}|u(x) - x_d|.\]
    \end{enumerate}
\end{theorem}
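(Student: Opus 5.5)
Because $|\grad u - e_d|\le \eta_1<1/2$ on $\overline{\{u>0\}}\setminus B_1$, we have $\partial_{x_d}u\ge 1/2$ there, so the partial hodograph transform of \sref{hodograph} is a global diffeomorphism of $\overline{\{u>0\}}\setminus B_1$ onto a region of the closed half-space $\{y_d\ge 0\}$ that contains the complement of a ball of radius $\approx 2$. Moreover $v(y)=x_d-y_d$ satisfies $|\grad v|\le C\eta_1$. Under this change of variables, $u(x)-x_d$ at a point corresponds to $-v(y)$ at a comparable point, with $|y|\approx|x|$. The plan is therefore to prove the expansion for $v$ and then pull it back: $v(y)=-s-k|y|^{2-d}+O(|y|^{1-d})$ (respectively $-k\log|y|+O(1)$) in $y$ implies the same form in $x$. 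The error incurred in replacing $|y|$ by $|x|$ is $O(|k|\,|x|^{1-d}\,|v|)$, which is absorbed into the remainder.

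\textbf{Step 1 (growth bounds via barriers).} Suppose $v$ is one-sided bounded, say $v\le M$. We compare $v$ with explicit homogeneous barriers for \eref{hodo-PDE}: the Neumann-harmonic functions $a+b|y|^{2-d}$ (or $a+b\log|y|$ if $d=2$), perturbed to absorb the error from $A\neq I$ and $N\neq 0$. By \eref{A-property} and \eref{N-property} this error is of size $\eta_1|D^2 v|$ in the interior and $C|\grad' v|^2$ on the boundary. Combining the comparison with the Harnack inequality \tref{hodograph-harnack} applied to $M-v$ on dyadic half-annuli, and with the scaled estimates \tref{hododgraph-regularity}, I expect to show that $v$ is bounded in $d\ge3$, with $\sup|v-v(\infty)|\lesssim\osc_{B_2\setminus B_1}v$. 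In $d=2$ the analogous conclusion is $|v|\le C(1+\log|y|)$. The same estimates give $|\grad v|\lesssim |y|^{1-d}\osc$ and $|D^2 v|\lesssim|y|^{-d}\osc$ (with the obvious $d=2$ modifications).

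\textbf{Step 2 (quantitative linearization).} Write the equation as $\Delta v = f:=\operatorname{tr}((I-A(\grad v))D^2v)$ in the half-space, with boundary condition $\partial_d v = g := N(\grad'v)$. The decay from Step 1 gives $|f|\lesssim |y|^{1-d}|y|^{-d}$ and $|g|\lesssim |y|^{2-2d}$. These are integrable perturbations: they decay faster than the Newtonian-potential threshold ($|y|^{-d-\epsilon}$ in the interior and $|y|^{1-d-\epsilon}$ on the boundary, the latter when $d\ge2$). Solving the half-space Neumann problem with these data via the Neumann Green's function produces a correction $w$ with $w=c_\infty + c|y|^{2-d}+O(|y|^{1-d})$. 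The remaining part $v-w$ is Neumann-harmonic and bounded outside a ball, so after even reflection it is harmonic outside a ball and bounded, and hence has the classical exterior expansion $a+b|y|^{2-d}+O(|y|^{1-d})$ by Kelvin transform and the removable singularity theorem.

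Alternatively, to follow the paper's hint more closely, I would Kelvin transform $v$ directly. The Kelvin transform $v^*(z)=|z|^{2-d}v(z/|z|^2)$ satisfies a perturbed oblique problem near $0$ in the half-ball whose coefficients are controlled by the decay of Step 1. Boundary $C^{1,\alpha}$ regularity at the origin then yields $v^*(z)=v^*(0)\cdot+ \ldots$, and expanding $v^*$ to first order at $0$ gives $s$, $k$ and the $O(|y|^{1-d})$ remainder. In $d=2$ the Kelvin transform preserves harmonicity, and the logarithm arises as the fundamental-solution term of the Neumann problem; here I would normalize $s=0$ by noting that additive constants are exactly the choice allowed by the max normalization.

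\textbf{Main obstacle.} The hard step is Step 1: obtaining decay of $\grad v$ at the sharp rate $|y|^{1-d}$ with a constant linear in $\osc_{B_2\setminus B_1}v$, using only one-sided boundedness. Without this rate, the nonlinear error terms $f$ and $g$ are not integrable enough to carry out Step 2. I would first obtain a weaker rate $|y|^{-\beta}$ by iterating an improvement of oscillation across dyadic annuli, using Harnack and the barriers. I would then bootstrap this through the linearized representation of Step 2, each pass improving the decay of $g=O(|\grad v|^2)$ and $f$, until the sharp rate is reached.
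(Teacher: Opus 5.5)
Your plan follows the paper's architecture: partial hodograph transform, barriers built by perturbing fundamental solutions, the Harnack inequality \tref{hodograph-harnack} on half-annuli, the scaled estimates \tref{hododgraph-regularity}, the linearization $\Delta v=f$, $\partial_d v=g$, and a Kelvin transform with a removable singularity. Your exterior-potential version of Step 2 is the same argument, with the splitting done before inverting rather than after.

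\textbf{Step 1 is misdiagnosed.} This is the one real divergence, and your assessment there is mistaken. The sharp rate $|\grad v|\lesssim m|y|^{1-d}$, with $m=\osc_{(B_2\setminus B_1)^+}v$, is not needed as input, and no bootstrap is required.
\begin{itemize}
\item The paper compares $v-v_\infty$ with the barriers $c_\delta|y|^{2-d+\delta}$ and $-c_\delta|y+\tfrac12 e_d|^{2-d+\delta}$ of \lref{fundie-barriers}. Perturbing the exponent fixes the interior sign. Shifting the center gives $\partial_d\phi>0$ on $\{y_d=0\}$, which the subsolution needs because $N\ge 0$, and which a radial function centered at the origin cannot provide.
\item This yields $|v-v_\infty|\le Cm|y|^{2-d+\delta}$ in one step (\lref{decay-est-d3}).
\item The scaled estimates then give $|f|\lesssim m^2|y|^{1-2d+\delta}$ and $|g|\lesssim m^2|y|^{2-2d+\delta}$ (\lref{decay-barrier}).
\item After the Kelvin transform these become $O(m^2|z|^{d-3-\delta})$ and $O(m^2|z|^{d-2-\delta})$. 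For $\delta<d-2$ they still make the Neumann potential Lipschitz at $0$.
\end{itemize}
So the sharp $O(m|y|^{1-d})$ remainder, and a posteriori the sharp gradient decay, come out of a single pass.

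In the language of your Step 2: an $O(|y|^{1-d})$ remainder requires finite first moments $\int|y||f|\,dy$ and $\int|y||g|\,dS$. Your stated threshold $|y|^{-d-\epsilon}$ only yields $o(|y|^{2-d})$, but the $\delta$-loss rates already have finite first moments. So your bootstrap is harmless but superfluous. The sentence claiming that ``the same estimates'' give $|\grad v|\lesssim|y|^{1-d}\osc$ must be replaced by this barrier decay lemma; boundedness of $v$ alone only gives $|\grad v|\lesssim|y|^{-1}$.

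\textbf{Removing the one-sided bound.} Step 1 must produce constants depending only on $v$ in $(B_2\setminus B_1)^+$, not on the unknown one-sided bound $M$.
\begin{itemize}
\item In $d\ge 3$ the paper proves that $v_\infty$ exists (\lref{height-exists}) and then applies the growth lemma to $v-v_\infty$ (\lref{one-sided-bound}).
\item In $d=2$ it uses the maximum principle at infinity (\lref{limit-containment-2d}). This is what allows Harnack to be applied to $v-\min_{\partial B_1^+}v$.
\item That maximum principle needs barriers unbounded at infinity, which in $d=2$ cannot be homogeneous. Hence the paper uses $\log|y|\pm\log(1+\log|y|)\pm y_d/|y|^2$ (\lref{logarithmic-barriers}). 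Your ``perturbed $a+b\log|y|$'' must be of this kind.
\end{itemize}

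\textbf{Your pullback is not uniform.} Absorbing $k(|y|^{2-d}-|x|^{2-d})=O(|k||v||x|^{1-d})$ into $Cm|x|^{1-d}$ needs $|v|\lesssim 1$, i.e.\ $|s|\lesssim 1$. Likewise, the hodograph image contains $\R^d_+\setminus B_\rho$ only for $\rho\gtrsim\max_{\partial B_1}u$, not for $\rho\approx 2$. If the free boundary passes far below $B_1$, at height $\approx -s$ with $s\gg 1$, the Neumann image of the hole sits near $-2se_d$. This term is then a genuine dipole of size $\sim|ks||x|^{1-d}$, which cannot be absorbed. The paper makes the same normalization tacitly by posing the hodograph problem in $\R^d_+\setminus B_1$ (and in its application $|s|$ is controlled), but you should state it.
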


  Due to the small gradient condition, we can perform our analysis entirely in the {hodograph coordinates} (see \sref{hodograph}), which transforms our free boundary problem to a nonlinear elliptic problem in a half-space with a nonlinear Neumann condition.

\subsection{One-sided flat solutions in hodograph variables}

Now we state a version of \tref{flat-exterior-original-coord} in the hodograph variable.

\begin{theorem}\label{t.flat-exterior}
  Let $v$ be a smooth and one-sided bounded solution of \eqref{e.hodo-PDE} with $\mathcal{N}=\R^d\setminus B_1$. There is $\eta_0(d)\in (0,1/2)$ such that if  $\sup_{\R^d_+ \setminus B_1} |\grad v| \leq \eta_0$ then the following holds.
    \begin{enumerate}
        \item For $ d \geq 3$, there  is $C(d) \geq 1$ and $s, k \in \R$ so that
        \[\left|v(y) - s  - k |y|^{2-d}\right| \leq  C|y|^{1-d}\osc_{(B_2 \setminus B_1)^+} v\]
        and 
        \[|s| \leq \dashint_{\partial B_1 \cap \R^d_+} v +  C\osc_{(B_2 \setminus B_1)^+} v \ \hbox{ and } \ |k| \leq C\osc_{(B_2 \setminus B_1)^+}v.\]
        \item For $d = 2$, there is $C \geq 1$ and $k \in \R$ so that
        \[\left|v(y) -  k \log |y|\right| \leq  C\max_{(B_2 \setminus B_1)^+} |v|\]
        and
        \[ |k| \leq C \max_{(B_2 \setminus B_1)^+} |v|.\]
    \end{enumerate}
\end{theorem}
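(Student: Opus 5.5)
We prove \tref{flat-exterior} in three stages: a priori growth control via barriers, a linearization showing that $v$ is a small perturbation of a harmonic function with zero Neumann data, and a Kelvin transform that turns the expansion at infinity into regularity at the origin. Normalize by subtracting a constant so that $v$ vanishes at some point of $(B_2\setminus B_1)^+$, and set $\theta:=\osc_{(B_2\setminus B_1)^+}v$.

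\textbf{Step 1: growth bounds.} For $d\ge3$ we want $\osc_{(B_{2R}\setminus B_R)^+}v\le C\theta R^{2-d}$. For $d=2$ we want $\sup_{(B_{2R}\setminus B_R)^+}|v|\le C\theta\log(2+R)$. We obtain these by comparison on $\R^d_+\setminus B_1$ with explicit barriers of the form
\[
\phi_\pm(y)=a\pm b\left(|y|^{2-d}\mp c\,|y|^{2-d-\beta}\right) \quad\text{or}\quad a\pm b\left(\log|y|\mp c\,|y|^{-\beta}\right).
\]
For these, $\partial_{y_d}\phi=0$ on $\{y_d=0\}$ up to the lower-order correction. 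The correction terms are chosen so that the error $\operatorname{tr}((A(\grad v)-I)D^2\phi)$, which is bounded by $C\eta_0|D^2\phi|$ by \eref{A-property}, and the error $N(\grad'v)$, which is bounded by $C|\grad'v|^2\le C\eta_0|\grad v|$ by \eref{N-property}, have a sign. Here we view the equation as linear, with the coefficients frozen at the given $v$. One-sided boundedness supplies the behavior at infinity needed for the maximum principle: the Hopf lemma at the flat boundary together with the oblique condition, and the comparison is closed by letting the outer radius tend to infinity. The Harnack inequality \tref{hodograph-harnack}, applied on dyadic annuli after rescaling, converts the oscillation decay into the stated form. \tref{hododgraph-regularity} then gives $|\grad v|\le C\theta R^{1-d}$ and $|D^2v|\le C\theta R^{-d}$ on the annulus at scale $R$. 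In $d=2$ the gradient bound is $C\theta R^{-1}$.

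\textbf{Step 2: linearization.} Write the equation as
\[
\Delta v=f:=-\operatorname{tr}((A(\grad v)-I)D^2v), \qquad \partial_{y_d}v=g:=N(\grad'v).
\]
By Step 1, $|f|\le C\theta^2R^{1-2d}$ and $|g|\le C\theta^2R^{2-2d}$, so both sources decay faster than the linear terms. Reflect evenly across $\{y_d=0\}$. The resulting function solves $\Delta\tilde v=\tilde f+2g\,\mathcal H^{d-1}|_{\{y_d=0\}}$ in $\R^d\setminus B_1$.

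\textbf{Step 3: Kelvin transform and identification of $s,k$.} Subtract the Newtonian potential of the sources. The potential is $O(\theta^2|y|^{1-d})$ in $d\ge3$, since the sources are integrable with the right weights; in $d=2$ a log-correction is absorbed into $k$. What remains is harmonic outside $B_1$ and even in $y_d$. Apply the Kelvin transform $w(z)=|z|^{2-d}h(z/|z|^2)$. Then $w$ is harmonic in $B_1\setminus\{0\}$ and, by Step 1, bounded by $C\theta|z|^{2-d}$ (respectively $\log$ in $d=2$). The removable singularity theorem gives $w=k+h_0$ with $h_0$ harmonic in $B_1$, and interior estimates give $|h_0(z)-h_0(0)|\le C\theta|z|$, with the sign of $k$ matched to the statement's $+k|y|^{2-d}$. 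Undoing the transform yields $v=s+k|y|^{2-d}+O(\theta|y|^{1-d})$ with $s=h_0(0)$. The bounds on $s$ and $k$ follow by averaging over $\partial B_1^+$ and from the boundary estimates.

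\textbf{Main obstacle.} The hard step is Step 1. The barriers must be valid for the nonlinear oblique problem: the Neumann perturbation $N$ must be beaten uniformly down to $B_1$, and not only at large $|y|$. In $d=2$ the logarithmic term makes the margins thin. I would first try the correction exponent $\beta=d-1$, with $\eta_0$ small, and if necessary iterate the estimate over dyadic annuli rather than comparing globally.
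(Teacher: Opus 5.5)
\paragraph{Verdict.} Your architecture matches the paper's: barrier bounds, Lieberman--Trudinger derivative estimates, quadratic linearization, reflection and potential subtraction, then a Kelvin transform. But Step 1 fails as you set it up, and everything after it depends on it.

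\paragraph{Why Step 1 fails.} With coefficients frozen at $v$ you only know $\|A(\grad v)-I\|\le C\eta_0$. For $\phi\approx a\pm b|y|^{2-d}$ the interior error is therefore of size $\eta_0 b|y|^{-d}$. Your correction contributes only $\Delta(bc|y|^{2-d-\beta})=bc\beta(d-2+\beta)|y|^{-d-\beta}$, which loses to that error for $|y|\gg 1$ whenever $\beta>0$. The correction is also radial, so $\partial_{y_d}$ of it vanishes on $\{y_d=0\}$. It therefore does nothing against the frozen oblique term, which is of size $\eta_0|\grad'\phi|\sim\eta_0 b|y|^{1-d}$.

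Tuning $\beta$ or iterating over dyadic annuli cannot fix this, because the bound $\|A-I\|\le C\eta_0$ is scale invariant. Take $A(y)=(1+\epsilon)\hat y\otimes\hat y+(1-\epsilon)(I-\hat y\otimes\hat y)$ with $\hat y=y/|y|$. Then $\|A-I\|=\epsilon$, and $\textup{tr}(A\,D^2|y|^{-\alpha})=0$ for $\alpha=\frac{1-\epsilon}{1+\epsilon}(d-1)-1<d-2$. Moreover $|y|^{-\alpha}$ has zero Neumann data on $\{y_d=0\}$. Its oscillation at scale $R$ is $\sim R^{-\alpha}\gg R^{2-d}$. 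In $d=2$ it grows like $R^{2\epsilon/(1+\epsilon)}$, which beats $\log R$. So no argument that uses only the frozen bound can give the sharp rate you claim, and the Step 2 bounds $|f|\lesssim\theta^2R^{1-2d}$, $|g|\lesssim\theta^2R^{2-2d}$ are not yet available.

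\paragraph{How the paper avoids this.} The paper never proves the sharp rate a priori. Its barriers are tested against the nonlinear operator at the barrier's own gradient, which is legitimate because the relevant gradients of $\phi$ and $v$ agree at a touching point.
\begin{itemize}
\item In $d\ge 3$ it uses pure powers $c_\delta|y|^{2-d+\delta}$ (\lref{fundie-barriers}); the subsolution is centered at $-\frac12 e_d$ so that $\partial_{y_d}\phi>0$ beats $N$.
\item In $d=2$ it uses the logarithmic barriers of \lref{logarithmic-barriers}.
\end{itemize}
The Laplacian of these barriers is a multiple of $c_\delta|y|^{-d+\delta}$, the same homogeneity as the error, so it absorbs the error. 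The price is a $\delta$-loss: $|v-v_\infty|\le C\theta|y|^{2-d+\delta}$ (\lref{decay-est-d3}), and in $d=2$ a slowly growing bound (\lref{one-sided-bound-2d}).

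The loss is harmless. \lref{decay-barrier} gives $|f|\lesssim\theta^2|y|^{1-2d+\delta}$ and $|g|\lesssim\theta^2|y|^{2-2d+\delta}$. After inversion, $|\tilde f|\lesssim|z|^{d-3-\delta}$ and $|\tilde g|\lesssim|z|^{d-2-\delta}$. The Neumann potential is then Lipschitz at $0$ for $d\ge3$ and continuous for $d=2$. The sharp $|y|^{1-d}$ remainder is thus an output of the Kelvin step, not an input.

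Your corrected barriers would also work in $d\ge3$ if you compare nonlinearly. The interior error becomes $\lesssim b^2|y|^{1-2d}$, which $\beta\le d-1$ absorbs. On the boundary, adding $\pm\epsilon y_d|y|^{-d}$ with $\epsilon\gtrsim b^2$ handles $|N(\grad'\phi)|\lesssim b^2|y|^{2-2d}$.

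\paragraph{Two smaller repairs.}
\begin{itemize}
\item One-sided boundedness does not let you close a comparison by sending the outer radius to infinity. The paper first proves two-sided boundedness, existence of $v_\infty$, and $\sup_{\partial B_1^+}|v-v_\infty|\le C\theta$. It does this by Harnack plus comparison on bounded annuli, with barrier amplitude tied to the data on $\partial B_r$ (\lref{one-sided-bound-2}, \lref{one-sided-bound}, \lref{height-exists}, and \lref{limit-containment-2d} in $d=2$).
\item In Step 3, B\^ocher's theorem gives the Kelvin transform as $s|z|^{2-d}+h_0$. This is not removable unless you first subtract $s=v_\infty$; after that, $k=h_0(0)$. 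You have $s$ and $k$ swapped. Also, the exterior potential of the sources is $M|y|^{2-d}+O(|y|^{1-d})$, not $O(\theta^2|y|^{1-d})$; the term $M|y|^{2-d}$ is absorbed into $k$.
\end{itemize}
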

The remainder of \sref{flat-asymptotics} will be occupied with the proof of \tref{flat-exterior}.

\begin{remark}
 Our analysis can be applied to any nonlinear Neumann problem of the type \eref{hodo-PDE} with operators $A(p)$ and $N(p')$ satisfying the estimates \eqref{e.A-property} and  \eref{N-property}.

\end{remark}

\subsection{Initial barriers} First we establish the existence of smooth homogeneous super and subsolution barriers, to be used in this section. 

In dimension $d=2$ it is very convenient to have barriers with the correct logarithmic behavior at highest order. We show the existence of such barriers in the next result. 

\begin{lemma}\label{l.logarithmic-barriers}
    Assume that $A$ and $N$ satisfy \eref{A-property} and \eref{N-property} and $d=2$. Define the barriers
    \[\psi_\pm(x) := \log |x|\pm\log(1+\log |x|)\pm \frac{x_d}{|x|^2}.\]
    There is $\varsigma_0>0$ sufficiently small so that if $0 \leq \varsigma \leq \varsigma_0$ then $\varsigma \psi_+$ is a subsolution and $\varsigma \psi_-$ is a supersolution of \eref{hodo-PDE} in $\R^2_+ \setminus B_{1}$. 
\end{lemma}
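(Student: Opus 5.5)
The plan is a direct computation: split each barrier into three pieces, each handling one part of the problem. The pieces are the main term $\log|x|$, the interior correction $\pm\log(1+\log|x|)$, and the boundary correction $\pm x_d/|x|^2$. Since $\log|x|$ solves the linearized problem exactly, only the two corrections have to absorb the nonlinear errors coming from \eref{A-property} and \eref{N-property}. The interior correction has zero normal derivative on $\{x_d=0\}$, and the boundary correction is harmonic. So each correction acts in exactly one place. Throughout, write $r=|x|\geq 1$ and $L=1+\log r\geq 1$.

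\emph{Step 1: derivative bounds.} Directly,
\[|\grad\psi_\pm|\leq C r^{-1}, \qquad |D^2\psi_\pm|\leq Cr^{-2} \quad \hbox{in } \R^2_+\setminus B_1.\]
Hence $|\grad(\varsigma\psi_\pm)|\leq C\varsigma\leq 1/2$ once $\varsigma_0$ is small, so \eref{A-property} and \eref{N-property} apply.

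\emph{Step 2: interior inequality.} The functions $\log r$ and $x_d/r^2=\partial_{x_d}\log r$ are harmonic. For the radial function $f(r)=\log(1+\log r)$, the radial Laplacian $f''+f'/r$ gives
\[\Delta \log(1+\log r) = -\frac{1}{r^2L^2}.\]
Therefore
\[\operatorname{tr}\bigl(A(\varsigma\grad\psi_\pm)\,\varsigma D^2\psi_\pm\bigr)=\mp\frac{\varsigma}{r^2L^2}+O\bigl(\|A(\varsigma\grad\psi_\pm)-I\|\,\varsigma|D^2\psi_\pm|\bigr)=\mp\frac{\varsigma}{r^2L^2}+O\Bigl(\frac{\varsigma^2}{r^3}\Bigr).\]
Since $r^{-3}\leq C r^{-2}L^{-2}$ on $r\geq1$ (because $rL^{-2}$ is bounded below), the error is dominated by the main term once $\varsigma\leq\varsigma_0$ is small. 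So the sign of the operator applied to $\varsigma\psi_\pm$ is fixed by the $\log(1+\log r)$ term.

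\emph{Step 3: boundary inequality.} On $\{x_d=0,\ |x|\geq1\}$ we have $\partial_{x_d}\log r = 0$ and $\partial_{x_d}\log(1+\log r)=0$, while $\partial_{x_d}(x_d/|x|^2)=1/r^2$. Hence
\[\partial_{x_d}(\varsigma\psi_\pm)=\pm\frac{\varsigma}{r^2},\qquad |N(\varsigma\grad'\psi_\pm)|\leq C\varsigma^2|\grad'\psi_\pm|^2\leq \frac{C\varsigma^2}{r^2}.\]
Therefore $\partial_{x_d}(\varsigma\psi_\pm)-N(\varsigma\grad'\psi_\pm)$ has the sign of $\pm\varsigma r^{-2}$ for $\varsigma$ small. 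This is why the $x_d/|x|^2$ term is included: it produces a normal derivative of exactly the order of the quadratic Neumann error $r^{-2}$. The slower correction $\log(1+\log r)$ is needed in the interior for the analogous reason: $\varsigma^2 r^{-3}$ must be beaten by something with an integrable-but-slower Laplacian, $r^{-2}L^{-2}$.

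\emph{Main obstacle: sign conventions.} I expect the delicate point to be matching these signs with the sub/supersolution convention for the oblique problem \eref{hodo-PDE}, whose boundary normal points into $\R^2_+$. First I would fix the convention by the Hopf-lemma comparison for $\operatorname{tr}(AD^2v)=0$, $\partial_{y_d}v=N$. That tells which sign of the interior defect and of $\partial_{y_d}v-N$ characterizes a subsolution. Then I would check that the $\pm$ choices in $\psi_\pm$ produce the correct sign in both Step 2 and Step 3.

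The computation above shows the interior and boundary signs are each determined separately by one correction term. If the stated sign pairing turned out inconsistent with the convention, the remedy is to flip the sign of the offending correction. That does not affect the leading $\log|x|$ behavior, which is all that is used later.

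Finally, the constants are uniform all the way down to $r=1$, since every estimate above holds on $r\geq1$ with $L\geq1$. So a single $\varsigma_0$ depending only on the constants in \eref{A-property} and \eref{N-property} works.
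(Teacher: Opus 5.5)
\textbf{The gap is the sign-convention step you deferred, and it decides the outcome.} Your decomposition and estimates coincide with the paper's proof. Both use harmonicity of $\log|x|$ and $x_d/|x|^2$, the bounds $|\grad\psi_\pm|\le C|x|^{-1}$ and $|D^2\psi_\pm|\le C|x|^{-2}$, an $O(\varsigma^2|x|^{-3})$ interior error from the bound on $A-I$, $\partial_{x_d}\psi_\pm=\pm|x|^{-2}$ on $\{x_d=0\}$, and $|N|\le C\varsigma^2|x|^{-2}$.

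The convention is not a free choice. At a boundary maximum of $v-w$ the tangential gradients agree, and Hopf's lemma gives $\partial_{x_d}(v-w)<0$ because $e_d$ is the inward normal. So comparison only works if a subsolution satisfies $\operatorname{tr}(A(\grad v)D^2v)\ge 0$ in $\{x_d>0\}$ and $\partial_{x_d}v\ge N(\grad' v)$ on $\{x_d=0\}$. This is also the convention the paper's proof uses.

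Your Step 2 is correct: $\log(1+\log r)$ is a concave function of the harmonic function $\log r$, so $\Delta\log(1+\log r)=-r^{-2}L^{-2}$. That gives interior sign $\mp$ for $\varsigma\psi_\pm$, while Step 3 gives boundary sign $\pm$. Hence, for every sufficiently small $\varsigma>0$, $\varsigma\psi_+$ is a strict supersolution in the interior (uniformly on $|x|\ge 1$, since $rL^{-2}\ge e/4$) and a strict subsolution on the boundary. So it is neither, and likewise for $\varsigma\psi_-$.

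The lemma is therefore false as stated, and your conditional step cannot be closed in its favor. What your computation does prove, once you commit, is the corrected statement with the middle sign flipped:
\[\psi_\pm=\log|x|\mp\log(1+\log|x|)\pm\frac{x_d}{|x|^2}.\]
For these, the interior and boundary signs agree.

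The paper reaches the stated pairing only through a sign error. Its formula for $D^2\log(1+\log|x|)$ has the term $+\frac{1}{|x|^2(1+\log|x|)^2}\frac{x\otimes x}{|x|^2}$, but the correct term is $-\frac{1}{|x|^2(1+\log|x|)^2}\frac{x\otimes x}{|x|^2}$. Its trace therefore comes out as $+\frac{1}{|x|^2(1+\log|x|)^2}$ instead of your $-\frac{1}{|x|^2(1+\log|x|)^2}$.

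Your closing remark that only the leading $\log|x|$ term matters later is broadly right, but the downstream barriers inherit the error and need real repair.
\begin{itemize}
\item The radial barrier $L^{-1}(\log|y|-\log(\log L+\log|y|))$ used for the two-dimensional maximum principle is subharmonic up to lower-order terms. It should be replaced by $L^{-1}(\log|y|+\log(\log L+\log|y|))$, and that comparison argument then goes through.
\item In the patched small-defect barrier for $d=2$, the $\log(1+\log|y|)$ term also supplies part of the gradient jump across $|y|=1$. There the coefficients must be re-tuned, not merely sign-flipped.
\end{itemize}
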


Note that, although the actual hodograph PDE is not invariant under negation $v \mapsto -v$, the properties \eref{A-property} and \eref{N-property} are. So, for example, we can also conclude under the hypotheses of \lref{logarithmic-barriers} that $-\varsigma\psi_+$ is a supersolution of \eref{hodo-PDE} in $\R^2_+ \setminus B_{1}$.

\begin{proof}
We check the solution properties by direct computation.  Note that $\log |x|$ is harmonic and its derivatives $D^k\log|x|$ are homogeneous of order $k$. Also $\frac{x_d}{|x|^2}$, which is $\partial_d (\log |x|)$, is harmonic, and its $k$th derivatives are homogeneous of order $k+1$.

We record the derivative of $\log(1+\log |x|)$
\[\grad \log(1+\log|x|) = \frac{1}{1+\log |x|} \frac{x}{|x|^2}\]
and
\[D^2\log(1+\log |x|) = \frac{1}{|x|^2(1+\log |x|)}(I - 2\frac{x \otimes x}{|x|^2})+\frac{1}{|x|^2(1+\log |x|)^2} \frac{x \otimes x}{|x|^2}.\]

Since the trace of $(I - 2\frac{x \otimes x}{|x|^2})$ vanishes in dimension $d=2$ we find that
    \begin{align*}
        \textup{tr}(A(\varsigma\grad\psi_\pm) D^2\psi_\pm) &=\textup{tr}(D^2 \psi_\pm) +\textup{tr}((A(\varsigma\grad \psi_\pm) - I)D^2\psi_\pm)\\
        &=\pm\frac{1}{|x|^2(1+\log|x|)^2} + \textup{tr}((A(\varsigma\grad \psi_\pm) - I)D^2\psi) \\
        &=\pm\frac{1}{|x|^2(1+\log|x|)^2} + O(\varsigma\frac{1}{|x|^3})
    \end{align*}
    For the last equality we used \eref{A-property} and the homogeneous upper bounds, which hold in $\R^d \setminus B_1$, $|\grad \psi_\pm| \leq C|x|^{-1}$ and $|D^2\psi_\pm| \leq C|x|^{-2}$.  Thus, for $|\varsigma| \leq \varsigma_0$ sufficiently small
    \[\pm\textup{tr}(A(\varsigma\grad \psi_\pm) D^2 \psi_\pm) \geq 0 \ \hbox{ for } \ |x| \geq 1.\]
    
     Next we check the subsolution properties on $\partial \R^d_+$. Noting the formula
   \begin{equation}\label{e.psiout-grad-formula}
       \grad \psi_\pm(x) =  \frac{x}{|x|^2}\left(1+\frac{1}{1+\log|x|}\right)\pm \frac{1}{|x|^4}\left(e_d|x|^2-2x_dx\right)
   \end{equation}
     and so
     \[\partial_{x_d} \psi_\pm(x) = \pm\frac{1}{|x|^2} \ \hbox{ on } \ x_d = 0.\]
     On the other hand, using again the homogeneous upper bound $|\grad'\psi_\pm| \leq C|x|^{-1}$ in $|x| \geq 1$ and \eref{N-property}
     \[N(\varsigma\grad'\psi_\pm) \leq \varsigma^2\frac{1}{|x|^2}.\]
     So, again for $\varsigma>0$ sufficiently small,
     \[\varsigma\partial_{x_d} \psi_+(x) \geq N(\varsigma\grad'\psi_+) \ \hbox{ and } \ \varsigma\partial_{x_d} \psi_-(x) \leq N(\varsigma\grad'\psi_-).\]
\end{proof}

The barriers for $d\geq 3$ are given below. We omit the proof, since it is standard (and easier than the case $d=2$), based on the fact that $A(p)$ has ellipticity ratio close to $1$ when $|p|$ is sufficiently small.  

\begin{lemma}\label{l.fundie-barriers}
    For $d\geq 3$ and any $0 < \delta < d-1$ there is $c_\delta=c_{\delta}(d)>0$ so that, calling $\sigma = \textup{sgn}(d-2-\delta)$,
     \[\phi_+(y)= \sigma c_\delta |y|^{2-d+\delta} \ \hbox{ and } \ \phi_-(y) = -\sigma c_\delta |y+\frac{1}{2}e_d|^{2-d+\delta} \]
 are respectively a supersolution and subsolution of \eref{hodo-PDE}. 
\end{lemma}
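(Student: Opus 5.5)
The plan is a direct computation, parallel to the proof of \lref{logarithmic-barriers}. The leading-order terms come from the Laplacian and from the flat Neumann condition $\partial_{y_d}\phi=0$. The errors come from $A(\grad\phi)-I$ and from $N(\grad'\phi)$, and they are controlled by \eref{A-property} and \eref{N-property}. Both errors carry an extra factor of $c_\delta$, so taking $c_\delta$ small absorbs them.

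Set $\gamma := 2-d+\delta$. Then $\gamma<1$ because $\delta<d-1$, and $\sigma\gamma=|\gamma|$ whenever $\gamma\neq 0$ (the case $\gamma=0$ makes $\phi_\pm$ constant, hence trivial). For $w(y)=|y-y_0|^{\gamma}$ I will use
\[D^2 w = \gamma|y-y_0|^{\gamma-2}\Big(I+(\gamma-2)\tfrac{(y-y_0)\otimes(y-y_0)}{|y-y_0|^2}\Big),\qquad \Delta w=\gamma(\gamma+d-2)|y-y_0|^{\gamma-2}=\gamma\delta\,|y-y_0|^{\gamma-2}.\]
This gives $\sigma\Delta w=|\gamma|\delta|y-y_0|^{\gamma-2}$, which is strictly signed and of exact homogeneity $\gamma-2$. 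In addition $|\grad(cw)|\le c|\gamma||y-y_0|^{\gamma-1}\le c|\gamma|\,2^{1-\gamma}$ on $\R^d_+\setminus B_1$ for $y_0\in\{0,-\tfrac12 e_d\}$. For the shifted centre this uses $|y+\tfrac12 e_d|\ge \tfrac12$ there, together with $\gamma-1<0$. So for $c$ small the gradient is at most $1/2$ and \eref{A-property} applies.

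\emph{Interior.} Writing $\textup{tr}(A(\grad\phi)D^2\phi)=\Delta\phi+\textup{tr}((A(\grad\phi)-I)D^2\phi)$, the error is bounded by
\[C|\grad\phi|\,|D^2\phi|\le C c^2\gamma^2|y-y_0|^{2\gamma-3}\le C' c\,|\gamma|\cdot\big(c|\gamma||y-y_0|^{\gamma-2}\big).\]
Hence for $c\le c_\delta:=\delta/(2C')$ the sign of the trace is the sign of $\pm c|\gamma|\delta|y-y_0|^{\gamma-2}$, with the error at most half of it. This is the same absorption as in \lref{logarithmic-barriers}. It is where $\delta>0$ is needed: the leading coefficient is proportional to $\delta$, so $c_\delta\to0$ as $\delta\to0$.

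\emph{Boundary.} For $\phi_+$ the centre lies on $\partial\R^d_+$, so $\partial_{y_d}\phi_+=0$ on $\{y_d=0\}$. Since $N(p')=\sqrt{1+|p'|^2}-1\ge 0$, the comparison $\partial_{y_d}\phi_+\le N(\grad'\phi_+)$ holds exactly, with no smallness required. For $\phi_-$ the centre is shifted to $-\tfrac12 e_d$, which produces the strictly signed normal derivative $\partial_{y_d}\phi_-=-\tfrac{c|\gamma|}{2}|y+\tfrac12e_d|^{\gamma-2}$. By \eref{N-property}, $N(\grad'\phi_-)=O(c^2\gamma^2|y+\tfrac12 e_d|^{2\gamma-2})$, and this is again smaller by a factor $c$, uniformly on $|y|\ge1$ because $\gamma<1$. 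So the sign of the oblique inequality is that of the linear term once $c$ is small.

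The main obstacle is not the estimates but matching the orientation conventions. The interior and boundary inequalities obtained above have to be paired with the sub/supersolution convention used for \eref{hodo-PDE}, the one fixed in \lref{logarithmic-barriers} for the outward normal $-e_d$. I expect this bookkeeping to be the delicate point: interpreted naively, the interior sign $\pm|\gamma|\delta$ and the boundary sign look opposite, so I would first double-check against how $\phi_\pm$ are used later and flip the roles of $\pm$ if necessary. The shift of the centre for $\phi_-$ is precisely what produces a strict boundary inequality able to beat the quadratic term $N$, so I would check that step carefully.
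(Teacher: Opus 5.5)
Your plan is the standard computation the paper has in mind (it omits the proof): take the leading terms from $\Delta$ and the flat Neumann condition, absorb the $A-I$ and $N$ errors by choosing $c$ small. Your interior estimate is fine. There are, however, two genuine problems.

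\textbf{The sign of $\sigma\gamma$.} Since $\sigma=\textup{sgn}(d-2-\delta)=\textup{sgn}(-\gamma)$, you have $\sigma\gamma=-|\gamma|$, not $+|\gamma|$. This error is the entire source of the orientation mismatch you flag. Your proposed remedy of swapping $\pm$ would not fix it. With your sign, $\phi_+$ would be strictly subharmonic, yet on $\{y_d=0\}$ it only satisfies $\partial_{y_d}\phi_+=0\le N(\grad'\phi_+)$, which is the supersolution inequality; so it would be neither a sub- nor a supersolution. Use the correct sign together with the convention of \lref{logarithmic-barriers}: a subsolution has $\textup{tr}(A(\grad v)D^2v)\ge 0$ and $\partial_{y_d}v\ge N(\grad' v)$. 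Then everything lines up with no relabeling:
\begin{itemize}
\item $\Delta\phi_+=-c|\gamma|\delta|y|^{\gamma-2}$ and $\partial_{y_d}\phi_+=0\le N$;
\item $\Delta\phi_-=+c|\gamma|\delta|y+\tfrac12e_d|^{\gamma-2}$ and $\partial_{y_d}\phi_-=+\tfrac{c|\gamma|}{2}|y+\tfrac12e_d|^{\gamma-2}$ on $\{y_d=0\}$.
\end{itemize}

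\textbf{The boundary absorption for $\phi_-$ is not uniform for $\gamma<1$.} The normal derivative has homogeneity $\gamma-2$, not $\gamma-1$, because $(y+\tfrac12e_d)\cdot e_d=\tfrac12$ is constant on $\{y_d=0\}$. Meanwhile $N(\grad'\phi_-)\approx\tfrac12c^2\gamma^2|y'|^{2\gamma-2}$ for large $|y'|$. So the ratio $N(\grad'\phi_-)/\partial_{y_d}\phi_-$ is of order $c|\gamma|\,|y'|^{\gamma}$. This is uniformly small on $\partial\R^d_+\setminus B_1$ only when $\gamma\le 0$, i.e.\ $\delta\le d-2$; in that case it is at most $Cc|\gamma|$, since $|y+\tfrac12e_d|\ge1$ there.

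For $\delta\in(d-2,d-1)$ the subsolution inequality $\partial_{y_d}\phi_-\ge N(\grad'\phi_-)$ actually fails once $|y'|\gtrsim(c|\gamma|)^{-1/\gamma}$, for every $c>0$. So the $\phi_-$ half of the statement cannot be proved on $\R^d_+\setminus B_1$ in that range, and your argument should restrict $\phi_-$ to $0<\delta\le d-2$.

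This restriction costs the paper nothing. $\phi_-$ is only used in \lref{decay-est-d3} with $\delta$ small, where $\gamma<0$. The other uses (\lref{one-sided-bound-2}, \lref{max-max-bd-2d}) involve only $\phi_+$, whose boundary inequality $0\le N$ holds exactly. Hence the full range $0<\delta<d-1$ is fine for $\phi_+$, where only the interior estimate needs $\gamma<1$.
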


\subsection{A growth bound via the homogeneous barriers} Next we show a barrier argument which, in a certain sense, controls the growth of $v$ on a large annulus $(B_r \setminus B_1)^+$ in terms of its growth on $(B_2 \setminus B_1)^+$.
  
\begin{lemma}\label{l.one-sided-bound-2}
    Suppose that $d \geq 3$, $r \geq 2$, and $v$ solves \eref{hodo-PDE} in $(B_r \setminus B_1)^+$. There is $\eta_0(d)>0$ sufficiently small so that if $\sup_{\R^d_+ \setminus B_1} |\grad v| \leq \eta_0(d)$ then
    \[ \max_{\partial B_r} v \geq \max_{\partial B_1} v - C(\max_{\partial B_1} v -\max_{\partial B_2} v)_+\]
    and
    \[\min_{\partial B_r} v  \leq\min_{\partial B_1} v+ C(\min_{\partial B_2} v -\min_{\partial B_1} v)_+\]
    for a universal $C \geq 1$. 
\end{lemma}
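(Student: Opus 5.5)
The plan is to run a maximum-principle comparison on the annulus $(B_r\setminus B_1)^+$ against a homogeneous decaying supersolution, then read off the bound on $\partial B_2$. Write $M_\rho := \max_{\partial B_\rho \cap \overline{\R^d_+}} v$.

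\textbf{Linearization.} Because $v$ is smooth with $|\grad v|\le \eta_0$, I will treat $v$ as a solution of a \emph{linear} problem with frozen coefficients:
\[\textup{tr}(a(y)D^2 v)=0 \ \hbox{ in } (B_r\setminus B_1)^+, \qquad \partial_{y_d} v = b(y)\cdot \grad' v \ \hbox{ on } \{y_d=0\}.\]
Here $a(y)=A(\grad v(y))$, and $b(y)=\int_0^1 \grad N(t\grad' v)\,dt$, so that $N(\grad' v)=b\cdot\grad' v$ since $N(0)=0$. By \eref{A-property} we have $\|a-I\|\le C\eta_0$. From the explicit form of $N$ we have $|\grad N(p')|\le |p'|$, hence $|b|\le C\eta_0$.

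The point of linearizing is that the problem becomes invariant under $w\mapsto \lambda w + \mu$ for every $\lambda\ge 0$ and $\mu\in\R$. This matters because the natural amplitude $M_1-M_r$ need not be small, so a barrier like $c_\delta|y|^{2-d+\delta}$ from \lref{fundie-barriers} with fixed amplitude would not suffice by itself.

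\textbf{Barrier.} Fix, say, $\delta = (d-2)/2$ and $\alpha := d-2-\delta>0$. Let
\[\phi(y) = |y|^{-\alpha}\Bigl(1-\varepsilon \frac{y_d}{|y|}\Bigr).\]
\begin{itemize}
\item In the interior, $\Delta|y|^{-\alpha} = -\alpha(d-2-\alpha)|y|^{-\alpha-2}<0$. The correction and the coefficient error contribute $O\bigl((\varepsilon+\eta_0)|y|^{-\alpha-2}\bigr)$. So for $\varepsilon,\eta_0$ small, $\textup{tr}(aD^2\phi)<0$.
\item On $\{y_d=0\}$ we have $\partial_{y_d}\phi = -\varepsilon|y|^{-\alpha-1}$ and $|b\cdot\grad'\phi|\le C\eta_0\,\alpha|y|^{-\alpha-1}$. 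Choosing $\eta_0\ll\varepsilon$ gives $\partial_{y_d}\phi < b\cdot\grad'\phi$.
\end{itemize}
Thus $\phi$ is a strict supersolution of the linear oblique problem, and so is $w=\mu+\lambda\phi$ for any $\lambda\ge 0$. Also $\phi\approx|y|^{-\alpha}$ up to a factor $1\pm\varepsilon$ on each sphere.

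\textbf{Comparison.} Set $\lambda := (M_1-M_r)_+ / (\min_{\partial B_1}\phi - \max_{\partial B_r}\phi)$ and $w:=M_r+\lambda(\phi-\max_{\partial B_r}\phi)$. The denominator is $\approx 1-r^{-\alpha}\ge c>0$ since $r\ge 2$.
\begin{itemize}
\item On $\partial B_r$ we have $w\ge M_r\ge v$, and on $\partial B_1$ we have $w\ge M_1\ge v$.
\item The comparison principle for the linear problem gives $v\le w$ in the annulus. It follows from strictness: a first interior touching point contradicts $\textup{tr}(aD^2(w-v))<0$, and a touching point on $\{y_d=0\}$ contradicts the oblique inequality, since there $\grad'(w-v)=0$ and $\partial_{y_d}(w-v)\ge 0$.
\end{itemize}
Evaluating on $\partial B_2$ gives
\[M_2\le M_r+\theta(M_1-M_r)_+, \qquad \theta=\frac{\max_{\partial B_2}\phi-\max_{\partial B_r}\phi}{\min_{\partial B_1}\phi-\max_{\partial B_r}\phi}\le \frac{(1+\varepsilon)2^{-\alpha}}{1-\varepsilon-(1+\varepsilon)2^{-\alpha}}\cdot(\hbox{adjusted}) <1.\]
The bound $\theta<1$ holds uniformly in $r\ge 2$ once $\varepsilon$ is small relative to $1-2^{-\alpha}$.

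\textbf{Conclusion.} If $M_1\le M_r$ the first inequality is trivial. Otherwise $M_1-M_2\ge(1-\theta)(M_1-M_r)$, i.e.
\[M_r\ge M_1-C(M_1-M_2)_+ \quad\hbox{with } C=(1-\theta)^{-1}.\]
The second inequality follows by applying the same argument to $-v$. This is legitimate because the linearized structure, namely $\|a-I\|$ small and $|b|$ small, is invariant under negation; equivalently, use the subsolution $-\phi$.

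\textbf{Main obstacle.} I expect the main difficulty to be the boundary condition for the barrier. The pure power $|y|^{-\alpha}$ has $\partial_{y_d}=0$ on $\{y_d=0\}$, while the oblique term $b\cdot\grad'\phi$ has unfavorable sign and the same homogeneity. This is why the tilt $-\varepsilon y_d/|y|$ is needed, with $\eta_0\ll\varepsilon\ll 1$ chosen so that both the interior and boundary strict inequalities survive.
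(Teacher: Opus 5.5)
Your argument is correct in substance but takes a different route from the paper.

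\textbf{The paper's route.} The paper stays with the nonlinear problem. It uses the supersolution $\alpha|y|^{-1/2}$ of \lref{fundie-barriers}, for which the boundary inequality holds automatically because $\partial_d|y|^{-1/2}=0\le N(\cdot)$ on $\{y_d=0\}$. The nonlinear problem is not invariant under $w\mapsto\lambda w$, so the paper caps the amplitude at $\alpha=\min\{m(r),c_*\}$, where $m(r)=\max_{\partial B_1}v-\max_{\partial B_r}v$, and compares on the annulus. It then uses the gradient bound to get $m(2)\le C\eta_0$, small enough that the cap cannot be active. This yields $m(r)\le(1-2^{-1/2})^{-1}m(2)$.

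\textbf{What your route buys.} Freezing coefficients gives you invariance under $w\mapsto\lambda w+\mu$, so you need no cap and no smallness of $m(2)$. It also gives invariance under $v\mapsto -v$, so the inequality for minima follows from the one for maxima with the same barrier. In the nonlinear setting the pure power is \emph{not} a subsolution on $\{y_d=0\}$, since $N\ge 0$, so the paper's ``similar'' argument for minima needs the shifted barrier $\phi_-$ instead.

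The price you pay is losing the sign of $N$, which is exactly why you need the tilt $-\varepsilon y_d/|y|$. Your argument is also self-contained, whereas \lref{fundie-barriers} is stated without proof.

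You don't need differentiability of $N$ either. \eref{N-property} alone gives $|N(\grad'v)|\le C\eta_0|\grad'v|$, so $b:=N(\grad'v)\grad'v/|\grad'v|^2$ (with $b=0$ where $\grad'v=0$) works for any $N$ as in the paper's remark.

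\textbf{A slip in the comparison step.} With $w=M_r+\lambda(\phi-\max_{\partial B_r}\phi)$ you get $w\le M_r$ on $\partial B_r$, not $w\ge M_r$, because the tilt makes $\phi$ nonconstant on spheres. Replace $\max_{\partial B_r}\phi$ by $\min_{\partial B_r}\phi$, both in $w$ and in the denominator of $\lambda$. Since $(1-\varepsilon)\rho^{-\alpha}\le\phi\le\rho^{-\alpha}$ on $\partial B_\rho^+$, you then get explicitly
\[\theta=\frac{2^{-\alpha}-(1-\varepsilon)r^{-\alpha}}{(1-\varepsilon)(1-r^{-\alpha})},\qquad 1-\theta=\frac{1-\varepsilon-2^{-\alpha}}{(1-\varepsilon)(1-r^{-\alpha})}\ge\frac{1-\varepsilon-2^{-\alpha}}{1-\varepsilon}.\]
This is bounded below uniformly in $r\ge 2$ once $\varepsilon<1-2^{-\alpha}$. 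It replaces your ``(adjusted)'' bound, and the rest goes through as you wrote.
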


Similar estimates are true of harmonic functions in annuli. Note that if $v$ were a harmonic polynomial the conclusions would be trivial, $\max_{\partial B_r} v \geq 0$ and $\min_{\partial B_r}v \leq 0$.

The proof follows a similar idea to \cite{ArmstrongSirakovSmart}*{Lemma 5.7}.

\begin{proof}
    
We will just argue for the lower bound on $\max_{\partial B_r} v$. The upper bound on the minimum is similar. Define
\[m(r) := \max_{\partial B_1} v - \max_{\partial B_r} v. \]
The goal is to show that $m(r) \leq Cm(2)_+$.

     If $m(r) \leq 0$ we are done. If $m(r) \geq 0$ then maximum principle in $(B_r \setminus B_1) \cap \R^d_+.$ implies that
\[ v(x) \leq \max_{\partial B_1} v \ \hbox{ in } \ (B_r \setminus B_1) \cap \R^d_+.\]
Implying that $m(2) \geq 0$ as well. So we have reduced to the case that $m(r) \geq 0$ and $m(2) \geq 0$.

    By \lref{fundie-barriers} the function $\phi(y) = \alpha|y|^{-1/2}$ is a supersolution of \eref{hodo-PDE} in $\R^d_+ \setminus B_1$ whenever $0 \leq \alpha \leq c_*(d)$. Define a barrier, with the non-negative (since $m(r) \geq 0$) constant $\alpha = \min \{ m(r),c_*\}$,
    \[\psi(y) := \max_{\partial B_1^+} v + \alpha(|y|^{-1/2}-1).\]
    By \lref{fundie-barriers} we have that $\psi$ is a supersolution of \eref{hodo-PDE} in $\R^d_+ \setminus B_{1}$ since $0 \leq \alpha \leq c_{*}$. Also 
    \[\psi(x) = \max_{\partial B_1^+} v \geq v (x) \ \hbox{ on }  \partial B_{1}.\]
  Note that, since $\alpha \leq m(r)$,  on $y \in\partial B_r^+$ we have
    \[\psi(y) = \max_{\partial B_1} v+\alpha(r^{-1/2}-1) \geq \max_{\partial B_1} v - \alpha \geq \max_{\partial B_r} v.\]
   Thus $\psi\geq v$ on $\partial B^+_r$. By comparison principle $\psi \geq v$ on $(B_r \setminus B_1)^+$. 
    
    Evaluating $\psi$ on $\partial B_{2}^+$ with the fact $\psi \geq v$, we conclude that
    \[(1-2^{-1/2})\min\{m(r),c_*\}\leq \max_{\partial B_1} v - v(y) \ \hbox{ for any } y \in \partial B_2^+\]
    and so
    \[\min\{m(r),c_*\}\leq (1-2^{-1/2})^{-1}(\max_{\partial B_1} v - \max_{\partial B_2} v) \leq (1-2^{-1/2})^{-1} c_d\eta_0 \leq \frac{1}{2}c_{*}\]
    as long as we choose $\eta_0$ sufficiently small.  Since $c_{*} > \frac{1}{2}c_{*}$ so we must have $\min\{m(r),c_{*}\} = m(r)$, and so,
    \[m(r) \leq (1-2^{-1/2})^{-1}m(2). \]

\end{proof}
By a very similar argument we can derive a growth bound in dimension $d=2$.

\begin{lemma}\label{l.max-max-bd-2d}
   Suppose that $d =2 $, $r \geq 2$, and $v$ solves \eref{hodo-PDE} in $(B_r \setminus B_1)^+$. There is $\eta_0>0$ sufficiently small so that if $\sup_{\R^d_+ \setminus B_1} |\grad v| \leq \eta_0$ then
    \[ \max_{\partial B_r} v \geq \max_{\partial B_1} v - C(1+\log r)(\max_{\partial B_1} v -\max_{\partial B_2} v)_+\]
    and
    \[\min_{\partial B_r} v  \leq\min_{\partial B_1} v+ C(1+\log r)(\min_{\partial B_2} v -\min_{\partial B_1} v)_+\]
    for some $C \geq 1$ universal.
\end{lemma}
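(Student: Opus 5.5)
We follow the proof of \lref{one-sided-bound-2}, replacing the homogeneous barrier $|y|^{-1/2}$ with the logarithmic barrier of \lref{logarithmic-barriers}. We argue only for the lower bound on $\max_{\partial B_r} v$; the upper bound on the minimum follows in the same way, using that the structural properties \eref{A-property} and \eref{N-property} are invariant under $v\mapsto -v$. Set $m(r) := \max_{\partial B_1} v - \max_{\partial B_r} v$.

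If $m(r)\le 0$ there is nothing to prove. Otherwise the maximum principle in $(B_r\setminus B_1)^+$ gives $v\le \max_{\partial B_1} v$ there. This is the maximum principle for the linearized oblique problem, in which the Neumann condition rules out boundary maxima on $\{y_d=0\}$. In particular $m(2)\ge 0$.

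The key change is the barrier, which must now decay logarithmically. By \lref{logarithmic-barriers} and the remark after it, $-\varsigma\psi_+$ is a supersolution in $\R^2_+\setminus B_1$ for $0\le\varsigma\le\varsigma_0$. Here $\psi_+ = \log|y|+\log(1+\log|y|)+y_d/|y|^2$. On $\partial B_1^+$ we have $0\le\psi_+\le 1$, and for $|y|\ge 1$ we have $\log|y|\le \psi_+(y)\le 2\log|y|+1$, up to harmless constants. We set
\[
\alpha:=\min\Big\{\frac{m(r)}{2\log r+1},\varsigma_0\Big\}
\]
and define
\[
\Psi(y):=\max_{\partial B_1} v+\alpha-\alpha\psi_+(y).
\]
This is a supersolution, and a constant shift is harmless since the equation involves only derivatives. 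On $\partial B_1^+$ we have $\Psi\ge \max_{\partial B_1}v\ge v$. On $\partial B_r^+$ we have $\Psi\ge \max_{\partial B_1}v-\alpha(2\log r+1)\ge\max_{\partial B_1}v-m(r)=\max_{\partial B_r}v$. Comparison in $(B_r\setminus B_1)^+$ for the nonlinear oblique problem then gives $v\le\Psi$; this is the same comparison step used in \lref{one-sided-bound-2}, and we take it as valid in the small gradient regime. Evaluating on $\partial B_2^+$, where $\psi_+\ge\log 2$, we get
\[
\alpha(\log 2-1)+\dots
\]
Care is needed here: we actually need $\psi_+(y)-\psi_+|_{\partial B_1}\ge c>0$ on $\partial B_2$. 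This holds because $\psi_+\ge \log 2+\log(1+\log 2)$ on $\partial B_2$ and $\psi_+\le 1$ on $\partial B_1^+$, where $\psi_+=y_d$. If the constants are too tight, we rescale the inner annulus: compare with $\partial B_{R_0}$ for a fixed $R_0$, or replace $\psi_+$ by $\psi_+-\max_{\partial B_1^+}\psi_+$ and use $\min_{\partial B_2}\psi_+>\max_{\partial B_1^+}\psi_+$, which holds as $2\log 2\cdot$ plus positive terms exceeds $1$ after a check; failing that, we use the equivalent statement with $\partial B_2$ replaced by $\partial B_4$ and Harnack/regularity (\tref{hododgraph-regularity}) to relate the two. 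Either way we get $c\,\alpha\le \max_{\partial B_1}v-\max_{\partial B_2}v=m(2)$.

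To conclude, $m(2)\le C\eta_0$ by the gradient bound, so choosing $\eta_0$ small forces $c\alpha\le \varsigma_0/2$. Hence the minimum defining $\alpha$ is the first entry, and $m(r)\le C(1+\log r)m(2)$. The main obstacle is this barrier step: $\psi_+$ is not constant on $\partial B_1$, so the normalization has to be chosen so that $\Psi$ dominates $v$ on the inner sphere while still separating strictly from it on $\partial B_2$.
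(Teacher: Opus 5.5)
Your strategy differs from the paper's. The paper reruns the proof of \lref{one-sided-bound-2} with the homogeneous supersolution $-\alpha|y|^\delta$, and its proof actually concludes only $m(r)\le(2^\delta-1)^{-1}r^\delta m(2)$. So a logarithmic barrier is the right tool for the stated factor $1+\log r$.

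However, the barrier step fails as written. In $d=2$ a radial $f$ satisfies $\Delta f=|y|^{-2}\partial_t^2 f$ with $t=\log|y|$, hence $\Delta\log(1+\log|y|)=-|y|^{-2}(1+\log|y|)^{-2}$. The last term of the Hessian formula in the proof of \lref{logarithmic-barriers} has the wrong sign. For $w=-\varsigma\psi_+$ this gives $\textup{tr}(A(\nabla w)D^2w)\ge\varsigma|y|^{-2}(1+\log|y|)^{-2}-C\varsigma^2|y|^{-3}>0$ for small $\varsigma$, since $|y|(1+\log|y|)^{-2}\ge e/4$ on $|y|\ge1$. So $-\varsigma\psi_+$ is a strict subsolution in the interior, and the comparison $v\le\Psi$ is unfounded.

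The correctly signed decaying supersolution is $-\varsigma\chi$ with $\chi=\log|y|-\log(1+\log|y|)+y_d/|y|^2$. With it, the step you flagged really does break: $\max_{\partial B_1^+}\chi=1$ but $\min_{\partial B_2^+}\chi=\log\frac{2}{1+\log 2}\approx0.17$, so evaluating on $\partial B_2$ gives nothing. For your $\psi_+$ the separation $\log(2(1+\log 2))\approx1.22>1$ holds only because of the sign error. Your fallbacks are not carried out: moving to $\partial B_{R_0}$ leaves you needing a fixed-scale bound $m(R_0)\le C\,m(2)$, which requires its own argument.

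The clean repair is to drop or shrink the dipole term.

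\emph{Maximum bound.} Since $N\ge0$, any radial $w$ satisfies $\partial_d w=0\le N(\nabla' w)$ on $\{y_d=0\}$, strictly where $\nabla' w\neq0$. Hence $\Psi=\max_{\partial B_1}v-\alpha\,(\log|y|-\log(1+\log|y|))$ is a supersolution for $0\le\alpha\le\varsigma_0$. It has the following properties:
\begin{itemize}
\item $\Psi=\max_{\partial B_1}v$ on $\partial B_1$;
\item $\Psi\ge\max_{\partial B_r}v$ on $\partial B_r$ once $\alpha=\min\{m(r)/\log r,\varsigma_0\}$;
\item on $\partial B_2$ it yields $c\,\alpha\le m(2)$ with $c=\log\frac{2}{1+\log 2}>0$.
\end{itemize}
Your closing step ($m(2)\le\eta_0$ forces $\alpha<\varsigma_0$) then gives $m(r)\le c^{-1}(\log r)\,m(2)$.

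\emph{Minimum bound.} Here you need a subsolution, and a positive Neumann push is unavoidable because $\partial_d w\ge N(\nabla' w)>0$ is required wherever $\nabla' w\neq 0$. Use $\alpha\,(\log|y|-\log(1+\log|y|)+\beta\,y_d/|y|^2)$ with $C\varsigma_0\le\beta<\log\frac{2}{1+\log 2}$. This keeps both the boundary inequality and a positive separation between $\partial B_1^+$ and $\partial B_2^+$.
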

\begin{proof}

We will just argue for the lower bound on $\max_{\partial B_r} v$. The upper bound on the minimum is similar. Define
\[m(r) := \max_{\partial B_1} v - \max_{\partial B_r} v. \]
The goal is to show that $m(r) \leq Cr^\delta m(2)_+$.

     If $m(r) \leq 0$ we are done. If $m(r) \geq 0$ then maximum principle in $(B_r \setminus B_1) \cap \R^d_+.$ implies that
\[ v(x) \leq \max_{\partial B_1} v \ \hbox{ in } \ (B_r \setminus B_1) \cap \R^d_+.\]
Implying that $m(2) \geq 0$ as well. So we have reduced to the case that $m(r) \geq 0$ and $m(2) \geq 0$.

    By \lref{fundie-barriers} the function $\phi(y) = -\alpha|y|^{\delta}$ is a supersolution of \eref{hodo-PDE} in $\R^d_+ \setminus B_1$ whenever $0 \leq \alpha \leq c_\delta$. Define a barrier, with the non-negative (since $m(r) \geq 0$) constant $\alpha = \min \{ r^{-\delta}m(r),c_\delta\}$,
    \[\psi(y) := \max_{\partial B_1^+} v - \alpha(|y|^{\delta}-1).\]
    By \lref{fundie-barriers} we have that $\psi$ is a supersolution of \eref{hodo-PDE} in $\R^d_+ \setminus B_{1}$ since $0 \leq \alpha \leq c_{\delta}$. Also 
    \[\psi(x) = \max_{\partial B_1^+} v \geq v (x) \ \hbox{ on }  \partial B_{1}.\]
  Note that, since $\alpha \leq r^{-\delta}m(r)$,  on $y \in \partial B_r^+$ we have
    \[\psi(y) = \max_{\partial B_1} v-\alpha(r^{\delta}-1) \geq \max_{\partial B_1} v - m(r) \geq \max_{\partial B_r} v.\]
   Thus $\psi\geq v$ on $\partial B^+_r$. By comparison principle $\psi \geq v$ on $(B_r \setminus B_1)^+$.

    Evaluating $\psi$ on $\partial B_{2}^+$ with the fact $\psi \geq v$, we conclude that
    \[(2^\delta - 1)\min\{r^{-\delta}m(r),c_\delta\}\leq \max_{\partial B_1} v - v(y) \ \hbox{ for any } y \in \partial B_2^+\]
    and so
    \[\min\{r^{-\delta}m(r),c_\delta\}\leq (2^\delta -1)^{-1}(\max_{\partial B_1} v - \max_{\partial B_2} v) \leq (2^\delta -1)^{-1} \eta_0 \leq \frac{1}{2}c_{\delta}\]
    as long as we choose $0<\eta_0 \leq c c_\delta(d)$ sufficiently small.  Since $c_{\delta} > \frac{1}{2}c_{\delta}$ so we must have $\min\{r^{-\delta}m(r),c_{\delta}\} = r^{-\delta}m(r)$, and so,
    \[m(r) \leq (2^\delta-1)^{-1}r^\delta m(2). \]

\end{proof}

\subsection{Bounds and limit at infinity}

First we consider bounded solutions, and show that bounded solutions have a limit at $\infty$. 

\begin{lemma}\label{l.height-exists}
    Suppose that $d \geq 2$ and $v$ solves \eref{hodo-PDE} in $\R^d_+ \setminus B_1$, $\sup_{\R^d_+ \setminus B_1} |\grad v| \leq \frac{1}{2}$,  and either $\liminf_{|x| \to \infty } v $ or $\limsup_{|x| \to \infty} v $ is finite. Then $\lim_{|x| \to \infty} v(x)$ exists.
\end{lemma}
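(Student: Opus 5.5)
The plan is to obtain the limit from a Harnack-chain argument on dyadic annuli, combined with the maximum principle between annuli. By symmetry it suffices to treat the case where $\limsup_{|y|\to\infty} v$ is finite. If instead only the liminf is finite, I would replace $v$ by $-v$. The negated function solves a problem of the same form, with $A$ and $N$ replaced by $\tilde A(p)=A(-p)$ and $\tilde N(p')=-N(-p')$, which still satisfy \eref{A-property} and \eref{N-property}. The tools below only use the linear structure, so they apply equally to it. If $L:=\limsup_{|y|\to\infty}v=-\infty$ there is nothing to prove, so assume $L\in\R$.

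\textbf{Step 1: linear structure and maximum principle.} For $|\grad v|\le 1/2$, write $N(\grad'v)=b(y)\cdot\grad' v$ with $b=\int_0^1\grad N(t\grad'v)\,dt$, so $|b|\le C|\grad'v|<1$. Then for any constant $c$, the function $w=v-c$ solves the linear problem
\[\textup{tr}(a(y)D^2w)=0 \ \hbox{ in } \ \{y_d>0\}, \qquad \partial_dw-b(y)\cdot\grad'w=0 \ \hbox{ on } \ \{y_d=0\},\]
with $a(y)=A(\grad v(y))$ uniformly elliptic and $b$ bounded. The boundary condition is therefore uniformly oblique. By the Hopf lemma for oblique problems, a nonconstant $w$ cannot attain an extremum over $(B_R\setminus B_r)^+$ on the flat boundary. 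Hence extrema over such annular regions are attained on $\partial B_r^+\cup\partial B_R^+$. Moreover, since the equation is invariant under $v\mapsto v(r\,\cdot)/\ldots$ in the scaling sense used for \tref{hodograph-harnack}, \tref{hodograph-harnack} applies to nonnegative solutions of this linear problem on half-balls of any radius. The same holds for interior Harnack on full balls.

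\textbf{Step 2: Harnack on annuli.} Fix $\e>0$ and choose $R_\e$ with $v\le L+\e$ on $\{|y|\ge R_\e\}\cap\R^d_+$, which is possible since the limsup is finite. For $r\ge 2R_\e$, the function $L+\e-v\ge 0$ on $(B_{4r}\setminus B_{r/2})^+$. Cover $(B_{2r}\setminus B_r)^+$ by a dimensional number of balls of radius $r/8$ contained in $\{y_d>0\}$ and half-balls centered on $\{y_d=0\}$, chained together. Then
\[\sup_{(B_{2r}\setminus B_r)^+}(L+\e-v)\le C\inf_{(B_{2r}\setminus B_r)^+}(L+\e-v),\]
with $C=C(d)$.

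\textbf{Step 3: conclusion.} By the definition of the limsup, there are $r_k\to\infty$ and points $y_k$ with $|y_k|\in[r_k,2r_k]$ and $v(y_k)\ge L-\e$. Step 2 then gives $v\ge L-(2C+1)\e$ on $(B_{2r_k}\setminus B_{r_k})^+$. For any $y$ with $2r_k\le|y|\le r_{k+1}$, apply the minimum principle of Step 1 on $(B_{r_{k+1}}\setminus B_{2r_k})^+$: its minimum is attained on the two spherical caps, where the bound already holds. Hence $v\ge L-(2C+1)\e$ for all $|y|\ge r_1$. It follows that $\liminf v\ge L-(2C+1)\e$, and letting $\e\to0$ gives $\lim_{|y|\to\infty}v=L$.

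The main obstacle I expect is Step 1. The nonlinear Neumann condition must be rewritten as a genuinely oblique linear condition so that both the Hopf and maximum-principle argument and Lieberman's Harnack inequality apply up to the flat boundary. This uses $|\grad v|\le 1/2$ to keep $|b|$ bounded away from losing obliqueness, and to keep $a$ uniformly elliptic. A secondary point is the bookkeeping of the Harnack chain near the corner where $\partial B_r$ meets $\{y_d=0\}$. Using half-balls centered on the flat boundary handles this corner.
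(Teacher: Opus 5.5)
Your proposal is correct and is essentially the paper's argument. The paper treats the $\liminf$ case, normalizes it to $0$, applies the Harnack inequality to $v+\varepsilon$ on spheres $\partial B_{r_k}^+$ where $v(x_k)\le\varepsilon$, and then uses the maximum principle on the annuli between consecutive spheres; this is the mirror image of your Steps 2--3. Your linearization in Step 1 is a useful clarification, since it justifies applying Harnack and the maximum principle to $L+\varepsilon-v$, which solves the negated problem rather than \eref{hodo-PDE} itself.
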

 Note that if $v$ is bounded then the limit hypothesis is satisfied. On the other hand, \lref{height-exists} shows that if either $\liminf_{|x| \to \infty } v > -\infty$ or $\limsup_{|x| \to \infty} v < + \infty$ then $v$ is bounded.

 Also note that in the bound $|\grad v| \leq 1/2$ above the $1/2$ is not critical, it can be replaced with any $0<c<1$ for uniform ellipticity of the hodograph PDE.

\begin{proof}
    Similar to \cite{ArmstrongSirakovSmart}*{Lemma 5.8}.  Let's consider the case $\liminf_{|x| \to \infty} v(x) > -\infty$, the other case is similar.

    We may assume that $ \liminf_{|x| \to \infty} v(x)  = 0$. Let $\ep>0$. There is $R \geq 1$ sufficiently large so that $v(x) \geq -\ep$ for $|x| \geq R$. Also is a sequence $x_k \to \infty$ with $|x_k| \geq R$ so that $v(x_k) \leq \ep$, and call $r_k = |x_k|$. Since $v+\ep$ is a non-negative solution of \eref{hodo-PDE} in $\R^d_+ \setminus B_R$, by Harnack inequality \tref{hodograph-harnack}, 
    \[\sup_{\partial B_{r_k} \cap \R^d_+} (v+\ep) \leq C \inf_{\partial B_{r_k} \cap \R^d_+} (v+\ep) \leq 2C\ep\]
    or
    \[\sup_{\partial B_{r_k} \cap \R^d_+} v \leq (2C+1)\ep.\]
    By maximum principle in each $(B_{r_{k+1}} \setminus B_{r_k}) \cap \R^d_+$ also
    \[\sup_{R^d_+ \setminus B_{r_1}} v = \sup_k\sup_{(B_{r_{k+1}} \setminus B_{r_k}) \cap \R^d_+}v \leq (2C+1)\ep\]
    and so
    \[\limsup_{|x|\to\infty} v(x) \leq (2C+1)\ep.\]
    Since $\ep>0$ was arbitrary the limit supremum is $0$ agreeing with the limit infimum.
\end{proof}
Next we consider bounded solutions, which have a limit at $\infty$ due to \lref{height-exists}, and make the upper and lower bounds explicit depending on the limit at $\infty$.  Since we will refer to the limit often we define
\begin{equation}
    v_\infty := \lim_{|x| \to \infty} v(x).
\end{equation}
\begin{lemma}\label{l.range-of-exterior-v}
    Suppose that $d \geq 2$ and $v$ solves \eref{hodo-PDE} in $\R^d_+ \setminus B_1$, $\sup_{\R^d_+ \setminus B_1} |\grad v| \leq \frac{1}{2}$ and $v$ is bounded, and so $v_\infty$ exists. Then
    \[\min\{\min_{\partial B_1^+} v, v_\infty\} \leq v(x) \leq \max\{\max_{\partial B_1^+} v, v_\infty\} \ \hbox{ in } \ \R^d_+ \setminus B_1. \]
\end{lemma}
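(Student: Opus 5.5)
The plan is to use the maximum principle for \eref{hodo-PDE} on the truncated domains $(B_R \setminus B_1)^+$ and then send $R \to \infty$, using the existence of $v_\infty$ from \lref{height-exists}. The upper bound is proved below; the lower bound follows by the symmetric argument, or by noting that the structural properties \eref{A-property} and \eref{N-property} are invariant under $v \mapsto -v$.

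Since $|\grad v| \leq 1/2$, the linearized coefficients $a^{ij}(y) := A^{ij}(\grad v(y))$ are uniformly elliptic. Thus $v$ solves the linear equation $\textup{tr}(a(y) D^2 v) = 0$ in the interior. For the Neumann condition, write
\[N(\grad' v) = b(y)\cdot \grad' v, \qquad b(y) = \int_0^1 \grad N(t\grad' v)\,dt,\]
with $|b| \leq C|\grad' v| \leq C/2$. The boundary condition then reads $\partial_{y_d} v - b \cdot \grad' v = 0$ on $\{y_d = 0\}$. This is a uniformly oblique linear condition, since the vector $(-b, 1)$ has a positive normal component. (If needed, one can shrink the constant $1/2$ to keep $|b|$ bounded; the remark after \lref{height-exists} says this constant is not essential.)

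Fix $\ep>0$ and set $M := \max\{\max_{\partial B_1^+} v, v_\infty\}$. Choose $R$ large enough that $v \leq v_\infty + \ep$ on $\partial B_R^+$. On the domain $(B_R\setminus B_1)^+$ we have $v \leq M+\ep$ on the curved boundary pieces $\partial B_1^+ \cup \partial B_R^+$. We claim $v \leq M + \ep$ throughout. First, the strong maximum principle forbids an interior maximum exceeding $M+\ep$ unless $v$ is constant. Second, on the flat part $\{y_d=0\}$, at a strict maximum point $y_0$ we have $\grad' v(y_0) = 0$, so the boundary condition gives $\partial_{y_d} v(y_0) = N(0) = 0$. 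The Hopf lemma, which applies to the linear elliptic equation since $v$ is smooth up to the boundary, would instead force $\partial_{y_d} v(y_0) < 0$ at a strict boundary maximum. This contradiction rules out a maximum above $M+\ep$ on the flat part, apart from the trivial constant case. Letting $\ep \to 0$ and $R \to \infty$ gives $v \leq M$ everywhere.

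The main obstacle is the boundary step: applying Hopf at points of $\{y_d = 0\}$. If a maximum sits at a corner where $\partial B_1$ or $\partial B_R$ meets $\{y_d=0\}$, there is no issue, since those points carry the boundary data. At flat boundary points, Hopf requires an interior ball condition, which the half-space satisfies, together with the nonconstancy of $v$. A cleaner way to organize this is to invoke the comparison principle for oblique problems already used in \lref{one-sided-bound-2}, comparing $v$ with the constant $M+\ep$, which is an exact solution since $N(0)=0$ and $D^2$ of a constant vanishes.
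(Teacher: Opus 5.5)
Your proposal is correct and follows the paper's argument: compare $v$ with the constant $M+\ep$, which is an exact solution since $N(0)=0$, on $(B_R\setminus B_1)^+$ for all large $R$ using the existence of $v_\infty$, then let $\ep\to 0$. Your strong maximum principle plus Hopf argument at flat boundary points simply writes out the comparison principle that the paper invokes directly.
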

In fact the stated bounds on $v(x)$ are exactly the range of $v$ on $\R^d_+ \setminus B_1$. 
\begin{proof}
    Call $M = \max\{\max_{(\partial B_1)^+} v, v_\infty\}$, then the constant function $M+\delta$ is a solution of \eref{hodo-PDE} which is above $v$ on $\partial B_1$ and on all sufficiently large spheres $\partial B_R$.  Comparison principle in $(B_R \setminus B_1)^+$ implies that $v(x) \leq M+\delta$ in $\R^d_+ \setminus B_1$. Since $\delta$ is arbitrary $v(x) \leq M$.  The lower bound is proved in a symmetric way.
\end{proof}

\subsubsection{One-sided bounded solutions in $d \geq 3$} Now we consider solutions which are bounded only from one side. First, in dimension $d \geq 3$, we show that such solutions are bounded and control the limit $v_\infty = \lim_{|x| \to \infty} v$ in terms of the values in an annulus $B_2 \setminus B_1$. 

\begin{lemma}\label{l.one-sided-bound}
    Suppose that $d \geq 3$ and $v$ solves \eref{hodo-PDE}. There is $\eta_0(d)>0$ sufficiently small so that if $\sup_{\R^d_+ \setminus B_1} |\grad v| \leq \eta_0(d)$ and $v$ is bounded either from above or from below, then $v$ is bounded and
    \[ \max_{\partial B_1} v - C(\max_{\partial B_1} v -\max_{\partial B_2} v)_+ \leq v_\infty \leq \min_{\partial B_1} v + C(\min_{\partial B_2} v -\min_{\partial B_1} v)_+\]
    where $C \geq 1$ is universal.
\end{lemma}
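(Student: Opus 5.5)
The plan is to combine the growth bound \lref{one-sided-bound-2} with the existence of a limit, \lref{height-exists}, and then let $r\to\infty$. First, the boundedness. Suppose, say, that $v$ is bounded from below, so $\liminf_{|x|\to\infty} v > -\infty$. Then \lref{height-exists} applies (we may take $\eta_0 \leq 1/2$), and $v_\infty = \lim_{|x|\to\infty} v(x)$ exists. It is finite from below, and the argument there shows it is finite from above as well: the Harnack chaining gives $\limsup v \leq \liminf v + (2C+1)\varepsilon$. Hence $v$ is bounded on $\R^d_+ \setminus B_R$ for $R$ large. On the compact region $(\overline{B_R}\setminus B_1)^+$, $v$ is bounded by continuity, so $v$ is bounded everywhere. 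The case of $v$ bounded from above is symmetric.

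Second, the two-sided estimate for $v_\infty$. For each $r \geq 2$, $v$ solves \eref{hodo-PDE} in $(B_r\setminus B_1)^+$ with $|\grad v| \leq \eta_0$. So \lref{one-sided-bound-2} gives
\[ \max_{\partial B_r} v \geq \max_{\partial B_1} v - C(\max_{\partial B_1} v - \max_{\partial B_2} v)_+,\]
\[ \min_{\partial B_r} v \leq \min_{\partial B_1} v + C(\min_{\partial B_2} v - \min_{\partial B_1} v)_+ .\]
The constant $C$ is independent of $r$; this uniformity in $r$ is exactly what \lref{one-sided-bound-2} provides in $d\geq 3$, thanks to the decaying barrier $|y|^{-1/2}$. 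Since $v(x)\to v_\infty$ uniformly as $|x|\to\infty$, both $\max_{\partial B_r} v$ and $\min_{\partial B_r} v$ converge to $v_\infty$ as $r\to\infty$. Passing to the limit in the two inequalities yields the claimed two-sided bound on $v_\infty$.

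The maxima and minima here are over $\partial B_r \cap \overline{\R^d_+}$, consistent with the conventions of \lref{one-sided-bound-2}. The choice of $\eta_0(d)$ is the smaller of the one in \lref{one-sided-bound-2} and $1/2$.

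The only delicate point is the first step: ensuring that a one-sided bound really upgrades to a finite limit. I would rely on the proof of \lref{height-exists}. There, after normalizing $\liminf v = 0$, the Harnack inequality \tref{hodograph-harnack} applied to $v+\varepsilon$ on spheres $\partial B_{r_k}$, together with the maximum principle on the annuli between them, bounds $\limsup v$. The needed hypothesis $|\grad v|\leq \eta_0 <1$ holds here, so this goes through without change. Everything else is a direct limit passage.
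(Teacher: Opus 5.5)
The gap is in your first step. Your second step is fine, and it is slightly cleaner than the paper's. The paper splits into cases according to where $v_\infty$ lies relative to $\min_{\partial B_1^+}v$ and $\max_{\partial B_1^+}v$ (via \lref{range-of-exterior-v}) and then applies \lref{one-sided-bound-2} to $v-v_\infty$. Letting $r\to\infty$ in both inequalities of \lref{one-sided-bound-2} handles all cases at once.

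In the first step, boundedness from below only gives $\liminf_{|x|\to\infty} v>-\infty$. \lref{height-exists}, however, requires this liminf to be \emph{finite}, and nothing in your argument excludes $\liminf_{|x|\to\infty} v=+\infty$, i.e.\ $v\to+\infty$. The proof of \lref{height-exists} starts by normalizing $\liminf v=0$, and the inequality $\limsup v\le \liminf v+(2C+1)\varepsilon$ you quote says nothing when the liminf is $+\infty$. (The remark following \lref{height-exists} is stated loosely on exactly this point.)

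A warning sign is that your boundedness argument never uses $d\ge 3$. In $d=2$ the same reasoning would rule out solutions behaving like $k\log|y|+O(1)$ with $k>0$. Such solutions are bounded below yet unbounded, and \lref{one-sided-bound-2d} and \tref{flat-exterior} are built to accommodate them. Excluding $v\to+\infty$ is precisely where the $r$-uniform estimate of \lref{one-sided-bound-2}, available only for $d\ge3$, must enter.

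The repair uses a tool you already invoke. If $v$ is bounded below, the second inequality of \lref{one-sided-bound-2} gives, for every $r\ge 2$,
$\min_{\partial B_r^+} v\le M:=\min_{\partial B_1} v+C(\min_{\partial B_2} v-\min_{\partial B_1} v)_+$.
Hence $\liminf_{|x|\to\infty} v\le M<\infty$, and \lref{height-exists} now applies. The bounded-above case is symmetric, using the max inequality.

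The paper does this a bit more directly. After normalizing $v\ge0$, it applies \tref{hodograph-harnack} on the spheres $\partial B_r^+$ to get $\max_{\partial B_r^+}v\le C\min_{\partial B_r^+}v\le CM$. So $v$ is bounded outright, and only then is \lref{height-exists} invoked. With this step inserted, the rest of your proof goes through as written.
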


\begin{proof}
    First we show that $v$ is bounded. Assume that $v$ is bounded from below, the other case can be argued similarly. Without loss assume that $v \geq 0$. By Harnack inequality, \tref{hodograph-harnack}, for all $r \geq 2$,
    \[\max_{\partial B_r^+} v \leq C\min_{\partial B_r^+} v.\]
    By \lref{one-sided-bound-2} 
    \[\min_{\partial B_r^+} v \leq \min_{\partial B_1} v+ C(\min_{\partial B_2} v -\min_{\partial B_1} v)_+,\]
    and so $\max_{\partial B_r^+} v$ is bounded independent of $r$.  Thus $v$ is bounded.

    Now we show that $v$ bounded implies the quantitative bounds in the statement. Since $v$ is bounded \lref{height-exists} implies that $v_\infty = \lim_{|x| \to \infty} v$ exists and \lref{range-of-exterior-v} implies that
   \[\min\{\min_{\partial B_1^+} v, v_\infty\} \leq v(x) \leq \max\{\max_{\partial B_1^+} v, v_\infty\} \ \hbox{ in } \ \R^d_+ \setminus B_1.\]
  In the case $v_\infty \in [\min_{\partial B_1^+} v,\max_{\partial B_1^+} v]$ the result is immediate.  
  
  Let's consider the case $v_\infty <\min_{(\partial B_1)^+} v $. In this case the claimed upper bound inequality is trivial, so we aim for the lower bound. Note that $v(x) - v_\infty$ is non-negative. Applying \lref{one-sided-bound-2} to $v(x) - v_\infty$ we find
  \[0 = \lim_{r \to \infty}\max_{\partial B_r^+} (v(x) - v_\infty) \geq \max_{\partial B_1} (v(x) - v_\infty) - C(\max_{\partial B_1} v -\max_{\partial B_2} v)_+.\]
  Rearranging this gives the desired lower bound on $v_\infty$. 
  
  The case $v_\infty > \max_{\partial B_1} v$ is argued symmetrically.
   
\end{proof}
Next we show a quantitative convergence rate to the exterior limit. This will follow from a barrier argument using the explicit barriers from \lref{fundie-barriers}, we will make use of the qualitative limit information established before to deal with the ``boundary at infinity" in the comparison argument.
\begin{lemma}\label{l.decay-est-d3}
      Suppose $d \geq 3$. For any $\delta>0$ there is $\eta_0(\delta,d)>0$ small so that: if $v$ solves \eref{hodo-PDE} is bounded either from below or from above and $\sup_{\R^d_+ \setminus B_1} |\grad v| \leq \eta_0$ then
    \[|y|^{d-2-\delta}| v(y) - v_\infty| \leq C\osc_{(B_2 \setminus B_1)^+} v  \ \hbox{ in } \ \R^d \setminus B_{1}.\]
\end{lemma}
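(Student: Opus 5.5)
The plan is to reduce to a decaying solution and compare it with the homogeneous barriers of \lref{fundie-barriers}, using the qualitative limit $v_\infty$ to handle the boundary at infinity. Write $\omega := \osc_{(B_2 \setminus B_1)^+} v$. If $\delta \geq d-2$ the claim only needs $|v - v_\infty| \leq C\omega$, which is the first step below. So assume $0<\delta<d-2$; then $\sigma=\textup{sgn}(d-2-\delta)=+1$ in \lref{fundie-barriers}.

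\emph{Step 1 (bounding $v - v_\infty$).} By \lref{one-sided-bound}, $v$ is bounded, so $v_\infty$ exists, and
\[ \max_{\partial B_1} v - C\omega \leq v_\infty \leq \min_{\partial B_1} v + C\omega. \]
Since the oscillation on $\partial B_1^+$ is at most $\omega$, this gives $\max_{\partial B_1^+} |v - v_\infty| \leq C\omega$. Then \lref{range-of-exterior-v} yields $|v - v_\infty| \leq C\omega$ on all of $\R^d_+ \setminus B_1$, which settles the trivial case.

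Set $w := v - v_\infty$. The hodograph PDE \eref{hodo-PDE} involves only $\grad v$ and $D^2 v$, so $w$ solves the same problem with the same gradient bound.

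\emph{Step 2 (barrier comparison).} Let $M := C\omega$ be the constant from Step 1. The gradient bound gives $\omega \leq C\eta_0$, so choosing $\eta_0(\delta,d)$ small ensures $M \leq c_\delta$. By \lref{fundie-barriers} (scaling the amplitude down within the admissible range), $\Phi := M|y|^{2-d+\delta}$ is then a supersolution. Adding a constant $\varepsilon>0$ preserves the supersolution property.

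Now fix $\varepsilon>0$ and take $R$ large enough that $|w| \leq \varepsilon$ on $\partial B_R^+$; this is possible because $w \to 0$ at infinity. Then $w \leq \Phi + \varepsilon$ on $\partial B_1^+$, since $\Phi = M$ there, and also on $\partial B_R^+$. Comparison in $(B_R \setminus B_1)^+$, where the flat part $\{y_d=0\}$ carries the oblique condition, gives $w \leq \Phi + \varepsilon$ there. Letting $R\to\infty$ and then $\varepsilon \to 0$ gives $w \leq M|y|^{2-d+\delta}$.

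For the lower bound we use the invariance of \eref{A-property} and \eref{N-property} under $v\mapsto -v$ (as remarked after \lref{logarithmic-barriers}), so $-\Phi - \varepsilon$ is a subsolution. The symmetric argument gives $w \geq -M|y|^{2-d+\delta}$. Multiplying through by $|y|^{d-2-\delta}$ yields the claim.

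\emph{Main obstacle.} The delicate point is the comparison principle in $(B_R\setminus B_1)^+$ with the nonlinear Neumann condition on the flat boundary. I would justify it as in the Harnack discussion. Both $w$ and $\Phi$ are $C^2$ with gradients at most about $\eta_0$, so their difference satisfies a linear uniformly elliptic equation with a linear uniformly oblique boundary condition, via the mean value theorem applied to $A$ and $N$. To get strictness, perturb the barrier slightly, for example by replacing $M$ by $M(1+\tau)$ with a small extra multiple of the strict supersolution $|y|^{2-d+\delta'}$. A positive interior maximum of $w - \Phi - \varepsilon$ is then excluded by the maximum principle. A maximum on $\{y_d=0\}$ is excluded by Hopf's lemma, since the derivative of the difference in the oblique direction would have the wrong sign. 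It would also be worth double-checking that the barrier's smallness constant $c_\delta$ is compatible with the factor $M=C\omega$; this is exactly why $\eta_0$ must depend on $\delta$.
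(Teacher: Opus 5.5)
Your upper bound is the paper's argument. You bound $|v-v_\infty|$ on $\partial B_1^+$ by $C\osc_{(B_2\setminus B_1)^+}v$ via \lref{one-sided-bound}, shrink $\eta_0$ so this amplitude is admissible, and compare $v-v_\infty$ with $\varepsilon+M|y|^{2-d+\delta}$ in $(B_R\setminus B_1)^+$, using $v\to v_\infty$ on the outer sphere. Your treatment of $\delta\ge d-2$ and your linearization/Hopf justification of comparison are also fine.

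The lower bound fails as written, because $-M|y|^{2-d+\delta}-\varepsilon$ is \emph{not} a subsolution of \eref{hodo-PDE}. On $\{y_d=0\}$ its normal derivative is $-M(2-d+\delta)|y|^{\delta-d}y_d=0$. The subsolution boundary inequality, however, requires $\partial_{y_d}\phi\ge N(\grad'\phi)=\sqrt{1+|\grad'\phi|^2}-1$, which is strictly positive wherever $y'\neq 0$. So a negative minimum of $w-\phi$ on the flat boundary produces no contradiction.

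The negation remark after \lref{logarithmic-barriers} does not rescue this. It only transfers barriers whose barrier property was verified from \eref{A-property} and \eref{N-property} alone. For the logarithmic barriers the normal derivative $\pm|x|^{-2}$ dominates $|N|=O(\varsigma^2|x|^{-2})$. The unshifted $\phi_+=c|y|^{2-d+\delta}$ is a boundary supersolution only because $\partial_{y_d}\phi_+=0\le N$, and that uses the sign $N\ge 0$, which is not preserved under $N(p')\mapsto -N(-p')$. This asymmetry is exactly why \lref{fundie-barriers} centers the subsolution at $-\tfrac12 e_d$.

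The fix, which is what the paper means by ``similar using the subsolution barriers,'' is to compare $w$ with $-\varepsilon-M'|y+\tfrac12 e_d|^{2-d+\delta}$, where $M'=(3/2)^{d-2-\delta}M$ and $M'\le c_\delta$ after shrinking $\eta_0$.
\begin{itemize}
\item This function has $\partial_{y_d}=\tfrac12(d-2-\delta)M'|y+\tfrac12e_d|^{\delta-d}>0$ on $\{y_d=0\}$, which dominates $N=O(M'^2|y+\tfrac12 e_d|^{2(1-d+\delta)})$ since $\delta<d-2$.
\item It lies below $w$ on $\partial B_1^+$, because $1\le|y+\tfrac12e_d|\le 3/2$ there.
\item Since $|y+\tfrac12e_d|\ge|y|$ on $\overline{\R^d_+}$, comparison gives $w\ge -M'|y|^{2-d+\delta}$, which is the desired bound up to the constant.
\end{itemize}
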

\begin{proof}
    By \lref{one-sided-bound} and \lref{height-exists} the limit $v_\infty:= \lim_{|x| \to \infty} v(x)$ exists. Then \lref{one-sided-bound} implies that
\[\sup_{\partial B_1^+} |v-v_\infty| \leq C\osc_{(B_2 \setminus B_1)^+} v \leq C\eta_0.\]
    Let $\eta_0$ sufficiently small so that $C\eta_0 \leq c_\delta$ from \lref{fundie-barriers}. Then, by \lref{fundie-barriers},
    \[\phi(x):= (\sup_{\partial B_1^+} |v-v_\infty|) |y|^{2-d+\delta}\]
    is a supersolution of \eref{hodo-PDE} in $\R^d_+ \setminus B_{1}$.
    Let $\ep>0$ arbitrary and $R_1 \geq 1$ sufficiently large so that
    \[v(y) \leq \ep \ \hbox{ on } \ \partial B_{R} \cap \R^d_+ \ \hbox{ for all } \ R \geq R_1.\]
    Then by maximum principle in $B_{R} \setminus B_{1}$, for every $R \geq R_1$,
    \[v(y) \leq \ep + (\sup_{\partial B_1^+} |v|) |y|^{2-d+\delta} \ \hbox{ in } \ \R^d_+ \setminus B_{1}.\]
    Since $\ep>0$ was arbitrary
    \[v(y) \leq  (\sup_{\partial B_1^+} |v|) |y|^{2-d+\delta} \ \hbox{ in } \ \R^d_+ \setminus B_{1}.\]
      The lower bound argument is similar using the subsolution barriers from \lref{fundie-barriers}.
\end{proof}
\subsubsection{One-sided bounded solutions in $d=2$} The situation in $d=2$ is slightly different. First we show a maximum principle. In $d = 2$ bounded from below (resp. above) solutions of \eref{hodo-PDE} in the exterior of $B_1^+$ attain their minimum (resp. maximum) value on $\partial B_1^+$.
\begin{lemma}\label{l.limit-containment-2d}
        Suppose that $d =2$, $v$ solves \eref{hodo-PDE}, and $v$ is bounded from below then
    \[\inf_{\R^2_+ \setminus B_1} v(y) = \min_{\partial B_1^+} v,\]
    similarly, if $v$ is bounded from above then
    \[\sup_{\R^2_+ \setminus B_1} v(y) = \max_{\partial B_1^+} v.\]
\end{lemma}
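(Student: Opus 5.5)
We prove the first statement; the second follows in the same way, since the properties \eref{A-property} and \eref{N-property} are invariant under $v \mapsto -v$, or one can argue symmetrically with the subsolution barrier. Suppose $v$ is bounded below and set $m := \min_{\partial B_1^+} v$. The inequality $\inf v \leq m$ is trivial, so we must show $v \geq m$ in $\R^2_+ \setminus B_1$. The idea is to compare $v$ from below with a logarithmically growing subsolution that matches the values on $\partial B_1$ and tends to $-\infty$ at infinity. Because $v$ is bounded below, the ordering then holds automatically on large spheres, and letting the slope go to zero gives the claim.

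Concretely, use $-\varsigma\psi_-$ from \lref{logarithmic-barriers}. Since $\varsigma\psi_-$ is a supersolution and the structural properties are symmetric under negation, $-\varsigma\psi_-$ is a subsolution in $\R^2_+\setminus B_1$ for $0 \leq \varsigma \leq \varsigma_0$. We have
\[-\psi_-(x) = -\log|x| + \log(1+\log|x|) + \frac{x_d}{|x|^2},\]
which on $\partial B_1^+$ equals $x_d \in [0,1]$ and tends to $-\infty$ as $|x|\to\infty$. Set
\[w_\varsigma(x) := m - \varsigma - \varsigma\psi_-(x).\]
Adding a constant preserves the subsolution property, because the operator depends only on $\grad v$ and $D^2 v$. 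On $\partial B_1^+$ we get $w_\varsigma \leq m - \varsigma + \varsigma = m \leq v$. On $\partial B_R^+$ we have $w_\varsigma \leq m - \varsigma + \varsigma(-\log R + \log(1+\log R) + 1) \to -\infty$, so $w_\varsigma < \inf v$ once $R$ is large, with $R$ depending on $\varsigma$.

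Next apply the comparison principle for \eref{hodo-PDE} in the bounded domain $(B_R\setminus B_1)^+$. This gives $v \geq w_\varsigma$ there, and since $R$ is arbitrary, on all of $\R^2_+\setminus B_1$. Here $v$ is a smooth solution, $w_\varsigma$ is a smooth strict-type subsolution, and the flat boundary $\{y_d=0\}$ carries the oblique condition. This is the same comparison used in Lemmas \ref{l.one-sided-bound-2} and \ref{l.range-of-exterior-v}, justified by linearizing the difference into a uniformly elliptic equation with an oblique condition. That linearization needs $|\grad v|$ and $|\grad w_\varsigma|$ small, or at least below $1$. For $w_\varsigma$ this holds because $|\grad \psi_-| \leq C$ on $|x|\geq1$ and $\varsigma$ is small; for $v$ it is the standing gradient assumption of this section. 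Finally, fix $x$ and let $\varsigma \to 0$: then $w_\varsigma(x) \to m$, so $v(x) \geq m$.

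The main obstacle is justifying the comparison step itself, since the nonlinear Neumann condition rules out a naive maximum principle. I would handle it by writing $\operatorname{tr}(A(\grad v)D^2v) - \operatorname{tr}(A(\grad w)D^2 w)$ as a linear operator applied to $v-w$. By the mean value theorem the boundary condition becomes $\partial_d(v-w) \leq b\cdot\grad'(v-w)$ with $|b|$ small. At a negative interior minimum of $v-w$ the strong maximum principle applies, and at a boundary minimum the Hopf lemma for oblique conditions gives a contradiction. A minor point is that the derivative of $\log(1+\log|x|)$ blows up nowhere on $|x|\geq 1$, so $\psi_-$ is smooth up to $\partial B_1$ and no issue arises there.
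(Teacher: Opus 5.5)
Structurally your argument is the paper's: a logarithmic barrier from Lemma \ref{l.logarithmic-barriers}, comparison on $(B_R\setminus B_1)^+$ with the ordering on $\partial B_R^+$ supplied by one-sided boundedness, and then sending the barrier's coefficient to zero. Your direct version with $\varsigma\to 0$ is cleaner than the paper's contradiction argument with the rescaled family $\phi_{L(r)}$, and your linearization/Hopf justification of the comparison step is fine.

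The step that fails is the subsolution property of the function you actually wrote down. Citing Lemma \ref{l.logarithmic-barriers} cannot catch this, because the error is in that lemma. In $d=2$, $\log(1+\log|x|)$ is a concave function of the harmonic function $\log|x|$, so
\[
\Delta\log(1+\log|x|)=-|x|^{-2}(1+\log|x|)^{-2}<0 .
\]
The second term of the displayed Hessian in the proof of that lemma should carry a minus sign. Your barrier is $w_\varsigma=m-\varsigma+\varsigma\big(-\log|x|+\log(1+\log|x|)+x_d|x|^{-2}\big)$, and it satisfies
\[
\textup{tr}(A(\grad w_\varsigma)D^2w_\varsigma)\le-\varsigma|x|^{-2}(1+\log|x|)^{-2}+C\varsigma^2|x|^{-3}<0
\]
for small $\varsigma$. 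So it is a strict supersolution in the interior, and $v\ge w_\varsigma$ does not follow. The paper's own $\phi_L$ is strictly subharmonic for the same reason, so its proof has the identical defect.

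The repair is to flip the sign of the middle term:
\[
w_\varsigma:=m-\varsigma+\varsigma\Big(-\log|x|-\log(1+\log|x|)+\frac{x_d}{|x|^2}\Big).
\]
This barrier has the required properties.
\begin{enumerate}
\item Interior: $\Delta w_\varsigma=\varsigma|x|^{-2}(1+\log|x|)^{-2}$. This dominates the error $|\textup{tr}((A(\grad w_\varsigma)-I)D^2w_\varsigma)|\le C\varsigma^2|x|^{-3}$ for small $\varsigma$, since $(1+\log r)^2/r\le 4/e$ on $[1,\infty)$.
\item Flat boundary: on $\{x_d=0\}$ one has $\partial_d w_\varsigma=\varsigma|x|^{-2}\ge N(\grad' w_\varsigma)$ for small $\varsigma$, by \eref{N-property}.
\item On $\partial B_1^+$ you still have $w_\varsigma=m-\varsigma+\varsigma x_d\le m$.
\item At infinity, $w_\varsigma\to-\infty$.
\end{enumerate}
For the bounded-above case use $\max_{\partial B_1^+}v+\varsigma+\varsigma\big(\log|x|+\log(1+\log|x|)-x_d|x|^{-2}\big)$, which is a genuine supersolution by the same computation. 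With these barriers the rest of your proof goes through verbatim.
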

\begin{remark}
    This is where we need the logarithmic barriers from \lref{logarithmic-barriers}. In particular we need a supersolution barrier $\phi(y)$ with $\phi(y) \to +\infty$ as $|y| \to +\infty$. Homogeneous supersolution barriers with a downward pointing singularity only exist with homogeneity $\alpha \leq d-2 = 0$, see \cite{ArmstrongSirakovSmart}. Thus we cannot achieve both the supersolution property and the property $\lim_{|y| \to \infty} \phi(y) = +\infty$ with a non-zero homogeneity as in \lref{fundie-barriers}, and we need to work with logarithmic barriers.
\end{remark}
\begin{proof}

We just consider the bounded from above case, the bounded from below case is similar.  By \lref{logarithmic-barriers} we have the radially symmetric barriers
\[\phi_L(|y|) := L^{-1}(\log |y| - \log(\log L + \log|y|) )\] 
which are supersolutions of \eref{hodo-PDE} in $\R^d_+ \setminus B_1$ as long as $L \geq R_0$. Here $R_0 >e$ is just some universal parameter determined in the proof of \lref{logarithmic-barriers}.

 Since $v$ is bounded from above $M := \sup_{\R^d_+ \setminus B_1} v < + \infty$. Suppose that 
 \[\alpha := \min\{M - \max_{\partial B_1^+} v,\tfrac{1}{2}\} >0,\]
 otherwise we are done. Let $r \gg 1$, and define $L(r) := \alpha^{-1}\log r$. Since $\log r \to +\infty$ as $r \to +\infty$, for $r>1$ sufficiently large $L(r) \geq R_0$ and also
 \begin{equation}\label{e.log-L(r)-stuff}
     L(r) \geq R_0 \ \hbox{ and also } \ \dfrac{\log(\log L(r) + \log r)}{\log r} \leq \dfrac{\alpha}{2} \leq \tfrac{1}{2}.
 \end{equation}

 For such large $r \gg 1$ define
\[\psi_r(y) = M-\frac{\alpha}{2}+ \phi_{L(r)}(|y|),\]
 which is a supersolution of \eref{hodo-PDE} in $\R^d_+ \setminus B_1$. On $y \in \partial B_r^+$, using \eref{log-L(r)-stuff},
\[\psi_r(y) = M-\frac{\alpha}{2}+\alpha(1-\dfrac{\log(\log L(r) + \log r)}{\log r}) \geq M \geq  v(y). \]
On $y \in \partial B_1^+$, using $r\gg 1$ from using \eref{log-L(r)-stuff} again,
\[\psi_r(y)  = M-\frac{\alpha}{2} -L(r)^{-1}\log\log L(r) \geq M-\alpha \geq \max_{\partial B_1} v \geq v(y). \]
So, by comparison, $v(y) \leq \psi_r(y)$ in $(B_r \setminus B_1)^+$. Evaluating at a fixed $y$, and using that $\phi_{L}(y) \to 0$ as $L \to \infty$ for fixed $y$,
\[v(y) \leq \lim_{r \to \infty} \psi_r(y) = M - \frac{\alpha}{2},\]
which contradicts the definition of $M$.
\end{proof}

Next we use the previous maximum principle with \lref{max-max-bd-2d} and Harnack inequality to establish quantitative growth bounds on exterior solutions in $d = 2$.

\begin{lemma}\label{l.one-sided-bound-2d}
    Suppose that $d =2$ and $v$ solves \eref{hodo-PDE}. There is $\eta_0(d)>0$ sufficiently small so that if $\sup_{\R^d_+ \setminus B_1} |\grad v| \leq \eta_0(d)$ and $v$ is bounded from below, then
    \[ 0 \leq v(y) -\min_{\partial B_1^+} v \leq   C(1+\log |y|) (\min_{\partial B_2^+} v - \min_{\partial B_1^+} v) \ \hbox{ for } \ |y| \geq 2. \]
     Similarly if $v$ is bounded from above then
     \[ 0 \geq v(y) -\max_{\partial B_1^+} v \geq  - C(1+\log |y|) (\max_{\partial B_1^+} v - \max_{\partial B_2^+} v) \ \hbox{ for } \ |y| \geq 2. \]
\end{lemma}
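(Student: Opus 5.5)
We treat the bounded-from-below case; the bounded-from-above case is symmetric, since \eref{A-property} and \eref{N-property} are invariant under $v \mapsto -v$. Subtracting a constant, we may normalize $\min_{\partial B_1^+} v = 0$, which preserves the equation. The lower bound $v \geq 0$ is then immediate from \lref{limit-containment-2d}, since that lemma gives $\inf_{\R^2_+\setminus B_1} v = \min_{\partial B_1^+} v = 0$. In particular $v$ is a nonnegative solution in $\R^2_+ \setminus B_1$, and the rest of the argument is the upper bound.

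For the upper bound we fix $|y| = r \geq 2$ and combine three ingredients. First, the second inequality of \lref{max-max-bd-2d} (the bound on $\min$), applied in $(B_r\setminus B_1)^+$, gives
\[\min_{\partial B_r^+} v \leq C(1+\log r)\,(\min_{\partial B_2^+} v)_+ = C(1+\log r)\min_{\partial B_2^+} v,\]
where the positive part is harmless because $\min_{\partial B_2^+} v \geq 0$. Second, since $v \geq 0$ is a solution in $\R^2_+\setminus B_1$, we apply \tref{hodograph-harnack} to the rescaling of $v$ to a half-ball of radius comparable to $r$ centered on $\partial\R^2_+$. The rescaling $\tilde v(z) = r^{-1}v(rz)$ preserves both the equation and the gradient bound. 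We cover the half-annulus $\partial B_r^+$ by a bounded number of such half-balls (or interior balls) contained in $\R^2_+\setminus B_{r/2}$ and chain the Harnack inequality along them. This yields $\max_{\partial B_r^+} v \leq C\min_{\partial B_r^+} v$ with $C$ universal, using $r/2 \geq 1$. Third, combining the two displays gives $v(y) \leq C(1+\log|y|)\min_{\partial B_2^+} v$, which after undoing the normalization is exactly the claimed bound, since $\min_{\partial B_2^+}v - \min_{\partial B_1^+} v$ is the normalized $\min_{\partial B_2^+} v$.

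The main technical point is the Harnack chaining near the boundary $\{y_2 = 0\}$, where the half-balls must stay away from $B_1$. For $r \geq 2$ this is fine as long as we use balls of radius, say, $r/4$ centered at points of $\partial B_r^+$ (including the boundary points $(\pm r,0)$). Only a universal number of such balls is needed, so the constant does not depend on $r$. The smallness $\eta_0$ is needed only to invoke \lref{max-max-bd-2d} and \tref{hodograph-harnack} with $|\grad v| \leq \eta_0 < 1$. No additional qualitative information about behavior at infinity is needed, because \lref{limit-containment-2d} already supplies the lower bound.
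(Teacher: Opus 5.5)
Your proposal is correct and is essentially the paper's proof: \lref{limit-containment-2d} for the lower bound, the minimum estimate of \lref{max-max-bd-2d} applied to the normalized nonnegative function, and \tref{hodograph-harnack} on each $\partial B_r^+$ to pass from $\min_{\partial B_r^+}$ to $\max_{\partial B_r^+}$, with your scaling and chaining remarks spelling out what the paper leaves implicit. One caveat concerns the paper rather than your reasoning: the written proof of \lref{max-max-bd-2d} uses the homogeneous barriers $-\alpha|y|^\delta$, so it only gives a factor $C_\delta r^\delta$ (which is also what the paper's own proof of this lemma writes), and the $(1+\log|y|)$ rate you take from that lemma's statement really requires rerunning the same barrier argument with the logarithmic supersolution $-\varsigma\psi_+$ from \lref{logarithmic-barriers} (this does go through).
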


\begin{proof}
    We just do the bounded from below case. The inequality $v(x) \geq \min_{\partial B_1^+} v$ is the content of \lref{limit-containment-2d}.

    Now $w(y) = v(y) - \min_{\partial B_1^+} v$ is a non-negative solution of \eref{hodo-PDE}. We can apply \lref{max-max-bd-2d} to find, for all $ r \geq 2$,
    \[\min_{\partial B_r^+} w \leq C_\delta r^\delta (\min_{\partial B_2^+} v - \min_{\partial B_1^+} v).\]
    Since $w$ is non-negative Harnack inequality, \tref{hodograph-harnack}, implies that, for all $r \geq 2$,
    \[\max_{\partial B_r^+} w \leq C\min_{\partial B_r^+} w.\]
    Combining with the previous inequality completes the proof.
\end{proof}

\subsection{Quantitative linearization of the hodograph PDE}

Next we combine the growth (\lref{one-sided-bound-2d} in $d=2$) or decay (\lref{decay-est-d3} in $d \geq 3$) estimates with elliptic regularity to establish decay estimates on first and second order derivatives. Then, plugging these derivative estimates into the hodograph PDE, we obtain that $v$ (almost) solves the Laplace equation with Neumann boundary conditions up to an even more quickly decaying error.
\begin{lemma}\label{l.decay-barrier}
  Suppose $d \geq 2$. For any $\delta>0$ there is $\eta_0(\delta,d)>0$ small so that: if $v$ solves \eref{hodo-PDE} is bounded either from below or from above and $\sup_{\R^d_+ \setminus B_1} |\grad v| \leq \eta_0$ then for $k =1,2$
\begin{equation}\label{e.derivative-estimates-delta}
    |y|^{d-2+k-\delta}|\grad ^k v(y)| \leq C(\delta,k)\osc_{(B_2\setminus B_1)^+} v  \ \hbox{ in } \ \R^d \setminus B_{2}.
\end{equation}    In particular, by applying these derivative estimates in \eref{hodo-PDE}, it follows that 
    \[\begin{cases}
        \displaystyle|\Delta v(y)| \leq C(\delta)(\osc_{(B_2\setminus B_1)^+} v)^2|y|^{1-2d+\delta} & \hbox{in } \ \R^d_+ \setminus B_{2} \\
        \displaystyle 0 \leq \partial_{d}v(y) \leq C(\delta)(\osc_{(B_2\setminus B_1)^+} v)^2 |y|^{2-2d+\delta} & \hbox{on } \partial\R^d_+ \setminus B_2.
    \end{cases}\]
\end{lemma}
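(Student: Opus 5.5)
The plan is to combine the zeroth-order decay/growth bounds already proved with the scaled regularity estimate \tref{hododgraph-regularity}, applied on half-balls (or balls) of radius comparable to $|y|$. Then the derivative bounds are inserted into the structure conditions \eref{A-property} and \eref{N-property}.

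\emph{Zeroth-order bounds.} Set $\theta := \osc_{(B_2\setminus B_1)^+} v$. In $d \geq 3$, \lref{decay-est-d3} (with $\delta$ replaced by $\delta/2$, say) gives
\[
|v(y)-v_\infty| \leq C\theta |y|^{2-d+\delta}
\]
in $\R^d_+\setminus B_1$. Hence for $|y_0|=\rho\geq 2$,
\[
\osc_{B_{\rho/2}(y_0)^+} v \leq C\theta\rho^{2-d+\delta}.
\]
In $d=2$, \lref{one-sided-bound-2d} gives $|v(y)-\min_{\partial B_1^+}v|\leq C(1+\log|y|)\theta$. This oscillation bound is too weak, since we need roughly $\theta\rho^{\delta}$ and in fact $\log\rho\lesssim\rho^\delta$ is fine. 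Concretely, for $|y|\in[\rho/2,2\rho]$ the oscillation of $v$ is at most $C\theta(1+\log\rho)\leq C_\delta\theta\rho^{\delta}$, which matches the exponent $d-2-\delta=-\delta$. Note that the one-sided bound $|v-\min v|$ from \lref{one-sided-bound-2d} is not enough by itself, because of the $(1+\log)$ factor. To get an oscillation bound on a single dyadic annulus I will use $0\leq w:=v-\min_{\partial B_1^+}v\leq C\theta(1+\log\rho)$ directly. This already gives $\osc\leq C\theta(1+\log\rho)$.

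\emph{Derivative estimates.} Fix $y_0$ with $|y_0|=\rho\geq 2$ and use the hyperbolic scaling invariance of \eref{hodo-PDE}. If $(y_0)_d\geq\rho/4$, apply interior estimates for the uniformly elliptic equation on $B_{\rho/4}(y_0)$; the nonlinear interior case is covered by the same Lieberman--Trudinger estimate, or by Krylov--Safonov plus Schauder since $A$ is smooth in $\grad v$. Otherwise, recenter at the boundary point $\bar y_0=(y_0',0)$ and apply \tref{hododgraph-regularity} on $B^+_{\rho/4}(\bar y_0)$, which lies in $\R^d_+\setminus B_1$. This gives
\[
\rho^k|\grad^k v(y_0)|\leq C\osc_{B^+_{\rho/2}(\bar y_0)}v\leq C\theta\rho^{2-d+\delta},
\]
which is \eref{derivative-estimates-delta}. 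Strictly, \tref{hododgraph-regularity} is stated at the center point. I will apply it at every point of the smaller half-ball $B^+_{\rho/8}(\bar y_0)$, using that half-balls centered at nearby boundary or interior points sit inside the larger one. Alternatively I will quote the full $C^{2}$ estimate from \cite{LiebermanTrudinger}.

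\emph{Linearization.} Write the equation as
\[
\Delta v = -\textup{tr}((A(\grad v)-I)D^2v).
\]
By \eref{A-property}, $|\Delta v|\leq C|\grad v||D^2v|\leq C\theta^2|y|^{1-d+\delta}|y|^{-d+\delta}$, which is the claimed bound $|y|^{1-2d+2\delta}$ after renaming $\delta$. On the boundary, $\partial_d v = N(\grad'v)=\sqrt{1+|\grad'v|^2}-1\geq 0$. By \eref{N-property}, $N(\grad' v)\leq C|\grad'v|^2\leq C\theta^2|y|^{2-2d+2\delta}$, using smallness $|\grad v|\leq\eta_0$.

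\emph{Main obstacle.} The main point needing care is the $d=2$ zeroth-order step. There the available bound is one-sided and logarithmic, and I absorb $\log\rho$ into $C_\delta\rho^\delta$. I also need $\theta$ to control the oscillation over the dyadic annulus, not merely $|v|$. The other delicate point is passing from the pointwise-at-center estimate of \tref{hododgraph-regularity} to all points near the boundary, which is handled by the recentering described above.
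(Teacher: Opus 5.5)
Your proposal is correct and follows the paper's proof. You take the zeroth-order oscillation bounds from \lref{decay-est-d3} (for $d\ge 3$) and from \lref{one-sided-bound-2d} (for $d=2$, absorbing $1+\log|y|$ into $C_\delta|y|^{\delta}$), feed them into the scaled estimate \tref{hododgraph-regularity} on half-balls of radius comparable to $|y|$, and then insert the derivative bounds into \eref{A-property} and \eref{N-property} with $\delta$ halved; your recentering at boundary points is a legitimate tightening of a step the paper glosses over, although your remarks calling the logarithmic bound ``too weak'' and ``not enough by itself'' contradict what you then correctly do with it.
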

For the purposes of \tref{flat-exterior} it will suffice to use this result with $\delta = \frac{1}{2}$, but we keep $\delta>0$ as a parameter here because we believe it clarifies the important growth/decay rates in the present statement.
\begin{proof}
     First we apply the elliptic regularity estimates of \tref{hododgraph-regularity} to estimate the derivatives. For $y \in \R^d_+ \setminus B_2$ we have $v$ solving
\[
\begin{cases}
    \textup{tr}(A(\grad_y v)D^2_yv) = 0 & \hbox{ in } \ B_{|y|/2}(y)^+\\
    \partial_{y_d}v = \sqrt{1+|\grad_y 'v|^2}-1 & \hbox{ on } \partial \{y_d>0\} \cap B_{|y|/2}(y)^+
\end{cases}
\]
so \tref{hododgraph-regularity} yields 
\[|y|^k|\grad ^k v(y)| \leq C(\delta,k)\osc_{B_{|y|/2}(y)^+} v.\]
In $d \geq 3$ \lref{decay-est-d3} gives
\[\osc_{B_{|y|/2}(y)^+} v \leq C|y|^{2+\delta-d} \osc_{(B_2 \setminus B_1)^+} v,\]
while in $d=2$ \lref{one-sided-bound-2d} gives
\[\osc_{B_{|y|/2}(y)^+} v \leq C(1+\log|y|) \osc_{(B_2 \setminus B_1)^+} v.\]
Plugging these into the right hand side of the elliptic estimate gives the result.

    Next we plug in the elliptic estimates into the hodograph PDE \eref{hodo-PDE} and put the error terms on the right hand side for estimates on the Laplacian.  More specifically
    \[ \Delta v = \textup{tr}((I - A(\grad v))D^2v) \ \hbox{ in } \ \R^d_+ \setminus B_1.\]
    By \eref{hodograph-PDE-A-formula} and direct estimation
    \[|A(p) - \textup{I}| \leq C(d)|p| \ \hbox{ for } \ |p| \leq 1\]
    and so
    \[|\Delta v| \leq C|\grad v||D^2v| \ \hbox{ in } \ \R^d_+ \setminus B_1.\]
    So applying \eref{derivative-estimates-delta} with $k=1$ and $k=2$ and $\delta/2$ we find
    \[|\Delta v| \leq C(\osc_{(B_2 \setminus B_1)^+} v)^2|y|^{1+\delta/2 - d}|y|^{\delta/2 - d}\leq C (\osc_{(B_2 \setminus B_1)^+} v)^2|y|^{1 - 2d+\delta} \ \hbox{ in } \ \R^d_+ \setminus B_2.\]
    The argument for the Neumann condition is similar. We use \eqref{e.N-property} and \eref{derivative-estimates-delta} with $k=1$ and $\delta/2$ to find
    \[\partial_{y_d} v \leq C(\osc_{(B_2 \setminus B_1)^+} v)^2(|y|^{1+\delta/2 - d})^2 = C(\osc_{(B_2 \setminus B_1)^+} v)^2 |y|^{2+\delta - 2d}.\]

\end{proof}

\subsection{Precise asymptotics via Kelvin Transform}

We can now establish the asymptotic expansion at $\infty$ and, in particular, the existence of the capacity. The idea is to perform a Kelvin transform and then use estimate the difference with the harmonic replacement.  The proofs model the case of harmonic functions in exterior domains, using the quantitative linearization result \lref{decay-barrier} to show that the errors are sufficiently small.

\begin{proof}[Proof of \tref{flat-exterior}] For this proof denote $m:= \osc_{(B_2 \setminus B_1)^+} v < 1$, we are shortening the notation since this number will appear repeatedly in the estimates below.

    (Case $d \geq 3$.)  By \lref{height-exists} $s := \lim_{|y| \to \infty} v(y)$ exists, assume without loss that $s = 0$. Take the Kelvin transform of $v$
    \[\tilde{v}(z) = |z|^{2-d}v(\tfrac{z}{|z|^2}) \ \hbox{ on } \ z \in (B_1^+\cup B_1') \setminus \{0\}.\]
    Let us make a note that, since $v(y) \to 0$ as $|y| \to \infty$, then
    \begin{equation}\label{e.removable-sing-est-tildev}
        |z|^{d-2}\tilde{v}(z) \to 0 \ \hbox{ as } \ z \to 0.
    \end{equation}
    Which we will use to show that $\tilde{v}$ has a removable singularity at $z = 0$.

    We compute
    \[\grad \tilde{v}(z) = (2-d)\frac{z}{|z|^d}v(\tfrac{z}{|z|^2}) + \frac{1}{|z|^d}(I-2\frac{z \otimes z}{|z|^2})\grad v(\tfrac{z}{|z|^2}).\]
    Evaluating on $z_d = 0$, and using that $e_d\frac{z \otimes z}{|z|^2} = 0$ for such $z$,
    \[\partial_d\tilde{v}(z) = e_d \cdot \grad \tilde{v}(z) = \frac{1}{|z|^d}\partial_d v (\tfrac{z}{|z|^2}) \ \hbox{ on } \ z\in B_1' \setminus \{0\}.  \]
    Combining this with the estimates in \lref{decay-barrier} we find the following estimate on $z \in B_{1/2}' \setminus \{0\}$
  \begin{equation}\label{e.kelvin-neumann-est}
      |\partial_{d}\tilde{v}(z)| = |z|^{-d}|\partial_d v (\tfrac{z}{|z|^2})| \leq Cm^2|z|^{-d}|z|^{2d-2-\delta} = Cm^2|z|^{d-2-\delta}.
  \end{equation}
    Also recall the following formula for the Laplacian under the Kelvin transform
    \[\Delta \tilde{v}(z) = |z|^{-d-2}\Delta v(\tfrac{z}{|z|^2}).\]
    Combining this formula with \lref{decay-barrier} we have the following PDE in $B_{1/2}^+$ 
    \begin{equation}\label{e.kelvin-laplace-est}
        |\Delta \tilde{v}(z)| = |z|^{-d-2}|\Delta v(\tfrac{z}{|z|^2})| \leq Cm^2|z|^{-d-2}||z|^{-2}z|^{1-2d+\delta} = Cm^2|z|^{d-3-\delta}.
    \end{equation}
    We summarize the two previous estimates, \eref{kelvin-neumann-est} and \eref{kelvin-laplace-est}, in the following PDE 
    \begin{equation}\label{e.tilde-v-pde-est}
        \begin{cases}
            \Delta \tilde{v}(z) = \tilde{f}(z)  & \hbox{in } B_{1/2}^+\\
            \partial_{d}\tilde{v}(z) = \tilde{g}(z) &\hbox{on } B_{1/2}' \setminus \{0\}
        \end{cases}
    \end{equation}
    where
    \begin{equation}\label{e.tilde-v-pde-est-rhs}
        |\tilde{f}(z)|  \leq Cm^2|z|^{d-3-\delta}  \hbox{ on } B_{1/2}^+ \hbox{ and }  |\tilde{g}(z)| \leq Cm^2|z|^{d-2-\delta} \hbox{ on } B_{1/2}' \setminus \{0\}.
    \end{equation}

    Now we need to argue that $\tilde{v}$ has a removable singularity at $z=0$.  We subtract off the Neumann kernel of the right hand side of \eref{tilde-v-pde-est}
    \[\psi(z) = \int_{B_{1/2}^+} \Phi(z-w) \tilde{f}(w) \ dw + \int_{B_{1/2}'} \Phi(z-w)\tilde{g}(w) dS(w)\]
where 
\[\Phi(z) = \frac{1}{d(d-2)|B_1|}(|z|^{2-d}+|(z',-z_d)|^{2-d})\]
is the standard Neumann kernel for $\R^d_+$. The bounds \eref{tilde-v-pde-est-rhs} imply that 
\begin{equation}\label{e.psi-at-zero}
    |\psi(z)-\psi(0)| \leq Cm^2|z| \ \hbox{ in } \ B_{1/2}^+.
\end{equation}
    Next let $\bar{v}$ be the solution of
    \[\begin{cases}
        \Delta \bar{v} = 0 & \hbox{in } B_{1/2}^+\\
        \partial_d \bar{v} = 0 &\hbox{on } \partial \R^d_+ \cap B_{1/2}\\
        \bar{v}(z) = \tilde{v}(z) - \psi(z) &\hbox{on } (\partial B_{1/2})^+.
    \end{cases}\]
    We claim that
    \begin{equation}
        \tilde{v}(z) \equiv \bar{v}(z) + \psi(z) \ \hbox{ in } \ B_{1/2}^+.
    \end{equation}
The reasoning is that (with even reflection)
\[w(z) = \tilde{v}(z)-(\bar{v}(z) + \psi(z))\]
is harmonic in $B_{1/2} \setminus \{0\}$, zero on the boundary $\partial B_{1/2}$, and, by \eref{removable-sing-est-tildev} and \eref{psi-at-zero}, grows more slowly than $|z|^{2-d}$ at the origin so it is zero. 

By even reflection and interior Lipschitz estimates of harmonic functions for $z \in B_{1/4}^+$
\[|\bar{v}(z) - \bar{v}(0)| \leq C|z|\osc_{(\partial B_{1/2})^+}(\tilde{v} - \psi) \leq  Cm|z|\]
using that, by \eref{psi-at-zero} and $m \leq 1$, $\osc_{(\partial B_{1/2})^+}\psi \leq Cm^2 \leq Cm$, and that 
\[\osc_{(\partial B_{1/2})^+}\tilde{v} =2^{2-d}\osc_{(\partial B_{2})^+}v \leq 2^{2-d}m.\]
    
    So now we conclude that
    \[|\tilde{v}(z) -\tilde{v}(0)| \leq |\psi(z) - \psi(0)| + |\bar{v}(z) - \bar{v}(0)| \leq  Cm|z| \ \hbox{ in } \ B_{1/4}^+.\]
    Calling $k:= \tilde{v}(0)$ we also find
    \[|k| = |\tilde{v}(0)| = |\bar{v}(0) + \psi(0)| = |\dashint_{\partial B_{1/2}^+} (\tilde v - \psi)(z)dS(z) +\psi(0)| \leq Cm.\]
     Undoing the Kelvin transform we arrive at
    \begin{align*}
        |v(y) - k|y|^{2-d}| &= ||y|^{2-d}\tilde{v}(|y|^{-2}y)-\tilde{v}(0)|y|^{2-d}|\\
        &=|y|^{2-d}|\tilde{v}(|y|^{-2}y)-\tilde{v}(0)|\\
        &\leq|y|^{2-d}Cm|y|^{-1} = Cm|y|^{1-d}.
    \end{align*}

    (Case $d=2$)   In the case $d=2$ we argue similarly, with inversion, and use a typical complex analysis trick. Again this is following a classical argument for harmonic functions in exterior domains, with an additional error term controlled via \lref{decay-barrier}. We will now use $z = x+iy$ for the complex variable.
    
    Define
    \[\tilde{v}(z) := v(\tfrac{1}{z}) \ \hbox{ for } \ z \in B_1\setminus \{0\}.\]
    Define, as before,
        \[\psi(z) = \int_{B_{1/2}^+} \Phi(z-w) \tilde{f}(w) \ dw + \int_{B_{1/2}'} \Phi(z-w)\tilde{g}(w) dS(w)\]
where 
\[\Phi(z) = \frac{-1}{4\pi}(\log |z|+\log |(z',-z_d)|)\]
    and $\bar{v}$ solving
    \[\begin{cases}
        \Delta \bar{v} = 0 & \hbox{in } B_{1/2}^+\\
        \partial_d \bar{v} = 0 &\hbox{on } \partial \R^d_+ \cap B_{1/2}\\
        \bar{v}(z) = \tilde{v}(z) - \psi(z) &\hbox{on } (\partial B_{1/2})^+.
    \end{cases}\]
    Then call
    \[w(z) = \tilde{v}(z)-(\bar{v}(z) + \psi(z)).\]
    The even reflection of $w$, not relabeled, is harmonic in the punctured disk $B_1 \setminus \{0\}$. Thus we can write, for some $k \in \R$,
    \[w(z) = k \log 2|z| + \textup{Re}(h(z))\] 
    where $h$ is holomorphic in the punctured disk. Since $w$ is $o(|z|^{-1})$ at the origin and so is $\log |z|$, then so is $|\textup{Re}(h(z))|$. Thus $0$ is a removable singularity and $h$ is holomorphic in the entire $B_1$. Since $\textup{Re}(h(z)) = w(z) -k\log (2\cdot\frac{1}{2})= 0$ on $\partial B_{1/2}$ then $h \equiv 0$.  Thus
    \[\tilde{v}(z) = \bar{v}(z) + \psi(z)+k\log 2 +k \log|z|\]
and so
    \[\tilde{v}(z) = k\log |z| + \bar{v}(z) + \psi(z)\]
    and now we conclude since $\bar{v}$ and $\psi$ are continuous at $z = 0$.
\end{proof}

\section{General solutions with a single-site defect}\label{s.general-asymptotics}
In this section we consider solutions of the single-site defect problem in the entire space, namely
\begin{equation}\label{e.defect-prob}
     \begin{cases}
         \Delta u = 0 & \hbox{in } \{u>0\} \\
         |\grad u| = 1+q(x) &\hbox{on } \partial \{u>0\}, 
     \end{cases}
\end{equation}
where the defect $q(x): \R^d \to (-1,\infty)$ is smooth and supported in $\overline{B_1(0)}$. For applications in the rest of the paper, we focus primarily on ``proper" solutions.

Our main result in this section regards the far-field asymptotic expansion  of {proper} solutions to \eref{defect-prob}. In later sections we will need to establish that the single-site solutions of interest are indeed {proper}.

\begin{theorem}\label{t.asymptotic-expansion-at-infty}
   
   Let $u$ be a {proper} solution of \eref{defect-prob}. 
   \begin{enumerate}[label = (\roman*)]
    \item In the case $d=2$. There is $ k(u)\in \R$ so that:
    \[\sup_{x \in \overline{\{u>0\}}}|u(x) -(x_d  +k \log |x|)| < +\infty \] 
       and there are universal constants $\sigma_0 \in (0, 1/2]$ and $C\geq 1$ so that  
  \[  \hbox{ if } |q|\leq \sigma_0 \hbox{ then } \  C\min q \leq k(u) \leq C \max q.\]
       Furthermore, for general $q \in C_c(B_1; (-1,\infty))$, there is $C(\min q)$ universal so that $k(u) \geq -C$.
       \item In the case $d \geq 3$. There are $s(u), k(u) \in \R$ so that:
   \[|u(x) -(x_d +  s -k|x|^{2-d})| \leq C_0(1+|x|)^{1-d} \ \hbox{ for } \ x \in \{u>0\}\]
   and if $k \neq 0$ then  
   \[|s|,|k| \leq C_0=C_0(d,\min q,\max q).\]
   Furthermore, if $\max |q| \leq \sigma_0(d)$, then $C_0\leq C(d) \max |q|$.
   \end{enumerate}
\end{theorem}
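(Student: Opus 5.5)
The plan has two parts. First I obtain the qualitative expansion from properness together with \tref{flat-exterior-original-coord}. Then I make the constants uniform by comparing $u$ with explicit barriers for the cylindrical model problem, as announced in the introduction (\pref{barrier-prop}).

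\emph{Qualitative step.} Since $q$ is supported in $\overline{B_1}$, $u$ solves \eref{bernoulli} in $\R^d\setminus B_1$. Properness and \lref{blow-down-grad} give some $R_0=R_0(u)$ such that $u$ is $C^{1,\alpha}$ outside $B_{R_0}$ and
\[\sup_{\overline{\{u>0\}}\cap\partial B_r}|\grad u-e_d|\to 0 \quad\hbox{as } r\to\infty.\]
Choose $R\geq R_0$ so that $|\grad u - e_d|\leq \eta_1$ outside $B_R$. After the rescaling $u\mapsto R^{-1}u(R\,\cdot)$ we are in the setting of \tref{flat-exterior-original-coord}. We still need one-sidedness of $u-x_d$. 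I will obtain it in the hodograph variables: $|\grad v|\leq\eta$ and $v(y)=o(|y|)$. A sublinear solution can be compared with the barriers of \lref{fundie-barriers} (for $d=2$, \lref{logarithmic-barriers}) on large half-annuli, and this forces it to be bounded on one side. If that route fails, I would instead rework \lref{height-exists} and \lref{one-sided-bound}, using Harnack on $v-\min_{\partial B_r}v$ together with the growth lemmas \lref{one-sided-bound-2} and \lref{max-max-bd-2d}. Either way \tref{flat-exterior-original-coord} applies and yields $s,k$ and an expansion with error $C(u)|x|^{1-d}$ (respectively $O(1)$ when $d=2$). Inside $B_R$ the error is trivially bounded, so we get the stated form with constants depending on $u$.

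\emph{Quantitative step.} To remove the dependence on $u$ I use sliding comparison (\lref{sliding-comparison}) against the barriers of \pref{barrier-prop}. These are sub- and supersolutions of the defect problem, obtained from the cylindrical model \eref{barrier-eqn} with the worst constant coefficients $1+\max q$ on $B_1$ (supersolution side) and $1+\min q$ on $B_1$ (subsolution side). They are exactly planar-harmonic outside $B_1$ with far-field expansion $x_d+s_\pm \mp k_\pm|x|^{2-d}$ (or $x_d+k_\pm\log|x|$), where $|s_\pm|,|k_\pm|\leq C(d,\min q,\max q)$, and moreover $\leq C\max|q|$ for small $q$. We translate a subsolution barrier far down in the $e_d$ direction, where it lies below $u$; this is possible because the asymptotics of $u$ are already known, so the boundary ordering at infinity holds on large spheres. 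We then slide it up until first contact. Contact cannot occur at finite points, by the strict comparison, so the ordering persists up to the translate matching the leading asymptotics of $u$. Comparing expansions gives one-sided bounds on $(s,k)$; the supersolution barrier gives the other side. In $d=2$ only the supersolution exists in the nonlinear regime, which explains the asymmetry in (i): for general $q$ we get only $k(u)\geq -C(\min q)$, while the small-$q$ linear patching barriers give $C\min q\le k\le C\max q$. The proviso ``if $k\neq 0$'' in $d\geq 3$ handles the degenerate situation where the leading orders coincide and the sliding argument cannot distinguish $s$. Once $s,k$ are controlled, the remainder bound $C_0(1+|x|)^{1-d}$ follows by reapplying \tref{flat-exterior} at a now-universal scale. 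That scale comes from the barrier trapping, which gives uniform flatness of $u$ outside a universal ball, and then \tref{flat-implies-c1alpha} applies.

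\emph{Main obstacle.} The hard part is the barrier construction \pref{barrier-prop} itself, especially the genuinely nonlinear supersolution and subsolution, together with justifying the sliding at infinity, where the uniform contact must be handled in $d=2$ despite the logarithmic growth. I would first try to build the barriers in hodograph coordinates outside $B_2$ from the functions of \lref{logarithmic-barriers} and \lref{fundie-barriers}, glued to an explicit radial profile near the defect.
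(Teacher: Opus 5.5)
Your overall strategy is the paper's: get the qualitative expansion from properness, \lref{blow-down-grad} and \tref{flat-exterior-original-coord}, slide the barriers of \pref{barrier-prop} using \lref{sliding-comparison}, then reapply the flat theory at a universal scale. There is, however, a genuine gap in controlling $s$ when $d\geq 3$.

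Sliding until the barrier's translate matches the leading asymptotics of $u$ gives the trapping
\[
\bigl(x_d+s-C_2\min\{1,|x+se_d|^{2-d}\}\bigr)_+\leq u(x)\leq \bigl(x_d+s+C_1\min\{1,|x+se_d|^{2-d}\}\bigr)_+ .
\]
Comparing expansions then bounds $k$. But $s$ enters both sides identically, so comparing expansions cannot bound $s$. It genuinely cannot be bounded this way: every half-plane $(x_d+s)_+$ with $|s|>1$ is a proper solution whose free boundary never meets the defect, so $s$ is arbitrary. That, and not a degeneracy of the sliding, is why the statement carries the proviso ``if $k\neq 0$''.

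The missing idea is an anchoring dichotomy.
\begin{itemize}
\item If $\partial\{u>0\}\cap\overline{B_1}=\emptyset$, then $u$ is a proper global solution of the homogeneous problem. Hence $u=(x_d+s)_+$ and $k=0$.
\item Otherwise pick $x^0\in\partial\{u>0\}\cap\overline{B_1}$. Since $u>0$ at points arbitrarily close to $x^0$, the upper trapping gives $x^0_d+s+C_1\geq 0$, i.e.\ $s\geq -1-C_1$. Since $u(x^0)=0$, the lower trapping gives $s\leq 1+C_2$.
\end{itemize}
This step is also what makes your final scale universal. The trapping is centered on the plane $\{x_d=-s\}$, so $u$ is flat relative to $(x_d)_+$ in $B_r$ only for $r\gg |s|+C_1+C_2$. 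Without a bound on $|s|$, both the radius $R_0$ and the oscillation $\osc_{(B_{2R_0}\setminus B_{R_0})\cap\{u>0\}}(u-x_d)$ fed into \tref{flat-exterior-original-coord} still depend on $u$.

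Two smaller points. First, you have the barrier coefficients reversed. A supersolution of the defect problem needs $|\grad\phi|\leq 1+q(x)$ at its free boundary points, so it must be built with $\sigma=\min q\leq 0$ in the cylinder, and the subsolution with $\sigma=\max q\geq 0$. Your later, correct, conclusion $k(u)\geq -C(\min q)$ in $d=2$ only fits the corrected assignment.

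Second, you rightly note that \tref{flat-exterior-original-coord} requires $u-x_d$ to be bounded on one side; the paper's proof takes this for granted. Your sketch does not establish it, though. The growth lemmas \lref{one-sided-bound-2} and \lref{max-max-bd-2d} only bound $\max_{\partial B_r}v$ from below and $\min_{\partial B_r}v$ from above. That alone does not force a sublinear $v$ to be bounded on one side.
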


Note that we lack universal control on the higher order error terms in the asymptotic expansion in $d=2$, it remains open whether it can be achieved.

We present the proof of this theorem below in \sref{asymptotic-expansion-at-infty}. First, since $u$ is {proper}, at some large (non-quantitative) scale it is sufficiently flat to employ \tref{flat-exterior-original-coord}. Still, we need quantified information on $k$, $s$ (and $R$ for $d\geq 3)$ for applications for the rest of the paper. This is a significant new challenge,  for which we need barriers with the correct asymptotic expansion. Although the barrier constructions are quite concrete, they play an essential role in the theory later.

Let $\sigma \in (-1,+\infty)$. Call $B_1' = \{x' \in \R^{d-1}: |x'| <1\}$ and $B_1' \times \R$ is the cylinder above $B_1'$ with axis in the $e_d$ direction. The goal is to construct subsolutions, in case $\sigma>0$, and supersolutions, in case $\sigma<0$, of the problem
\begin{equation}\label{e.barrier-eqn}
    \begin{cases}
         \Delta \phi = 0 & \hbox{in } \{\phi>0\} \\
         |\grad \phi| = 1+ \sigma{\bf 1}_{B_1' \times \R}&\hbox{on } \partial \{\phi>0\},
     \end{cases}
\end{equation}
with the asymptotic bounds, for some $C_1(\sigma,d)>0$, in $d \geq  3$
\begin{equation}\label{e.barrier-expansion-3d}
\begin{array}{c}
    0 \leq \textup{sgn}(\sigma)(x_d-\phi(x)) \leq  C_1(\sigma,d) \min\{1,|x|^{2-d}\} \ \hbox{ in } \ \{\phi >0\}.
    \end{array}
\end{equation}
and in $d=2$,
\begin{equation}\label{e.barrier-expansion-2d}
\begin{array}{c}
     \textup{sgn}(\sigma)(x_d-\phi(x)) \leq  -C_1(\sigma) \max\{1,\log |x|\} \ \hbox{ in } \ \{\phi >0\}.
    \end{array}
\end{equation}
 Note that \eref{barrier-eqn} is invariant with respect translations in the $x_d$ variable, this allows the barriers to be used in sliding comparison arguments.

We will construct three different types of barriers in this section. In the nearly linearized regime when $\max q$ and/or $\min q$ are small we can construct barriers via patching a linear function, in $B_1$, with a slightly tilted fundamental solution outside of $B_1$. This is a little bit more difficult in dimension $d=2$ where we were unable to successfully construct the barrier directly in original coordinates, and instead use hodograph coordinates again. In this almost linear regime the sub and supersolution constructions are basically symmetrical. See \sref{small-sigma-3d} and \sref{small-sigma-2d} for these constructions. In the nonlinear regime, when $\min q$ and/or $\max q$ are large, we have two quite distinct barrier constructions for sub and supersolutions.  This asymmetry reflects an important asymmetry between advancing and receding regimes in the truly nonlinear problem. The supersolution constructions works in all $d \geq 2$, but the subsolution construction only works in $d \geq 3$.  We are very interested whether analogous subsolutions exist in the nonlinear regime in dimension $d=2$. See \sref{barrier-subsolution} and \sref{barrier-supersolution} for the sub and supersolution constructions respectively.

We state the results of the barrier constructions, first in the nearly linear regime and then in the nonlinear regime.

\begin{proposition}\label{p.barrier-prop} 

\begin{enumerate}[label = (\roman*)]
    \item There is a universal constant $0<\sigma_0<1$ such that, for $|\sigma| \leq \sigma_0$ and $\sigma>0$ (resp. $\sigma <0$) there is a smooth subsolution (resp. supersolution) $\phi_\sigma$ of \eref{barrier-eqn} satisfying \eref{barrier-expansion-3d} (or \eref{barrier-expansion-2d}) with $C_1(d,\sigma) = C(d)|\sigma|$.
    \item In all $d \geq 2$ and for any $-\infty < \sigma < 0$ there is a smooth supersolution of \eref{barrier-eqn} $\phi_\sigma$ satisfying \eref{barrier-expansion-3d} (or \eref{barrier-expansion-2d}).
    \item In $d \geq 3$ and for any $0 < \sigma <\infty$ there is a smooth subsolution of \eref{barrier-eqn} $\phi_\sigma$ satisfying \eref{barrier-expansion-3d}.
\end{enumerate}
\end{proposition}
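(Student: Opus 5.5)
Here is how I would build the three families of barriers in Proposition \ref{p.barrier-prop}. In every case the barrier $\phi_\sigma$ is built so that it is independent of translations in $x_d$, i.e. it depends on $x$ only through a graph over $x'$. Sliding comparison then applies. In each regime I would glue an inner profile over the cylinder $B_1'\times\R$ to an outer profile that carries the far-field correction $\mp C|\sigma||x|^{2-d}$ (or $\pm C|\sigma|\log|x|$ when $d=2$). The gluing is done by a $\max$ for subsolutions and a $\min$ for supersolutions, so that the kink has the right sign.

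\emph{Regime (i), small $|\sigma|$, $d\ge3$.} Take $\sigma>0$; the supersolution case is symmetric. Inside a neighborhood of the cylinder I would use the tilted plane $\ell(x)=(1+\sigma)(x_d+a)$. Outside I would use $\Phi(x)=x_d + s - k|x+te_d|^{2-d}$ with $k=C\sigma$. I would set $\phi=\max\{\ell,\Phi\}$ near the region where $|x'|\lesssim 2$, and $\phi=\Phi$ away from it. Then I check three things:
\begin{itemize}
\item $\ell>\Phi$ on $\{|x'|\le1\}\cap\{\phi\approx0\}$, so that there the free boundary condition is $|\nabla\ell|=1+\sigma$;
\item $\Phi>\ell$ for $|x'|\ge2$;
\item on $\partial\{\Phi>0\}$ one has $|\nabla\Phi|\ge1$.
\end{itemize}
The third point follows because $\partial_d\Phi = 1+(d-2)k\,(x_d+t)|x+te_d|^{-d}$ is at least $1$ when $x_d+t>0$, after shifting the pole below the free boundary.

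The translation invariance in $x_d$ is the delicate point. A single fundamental solution is not $x_d$-invariant, but the equation \eqref{e.barrier-eqn} is. Sliding only requires the family $\phi(x+te_d)$ to consist of subsolutions, and any translate of a subsolution of \eqref{e.barrier-eqn} is again a subsolution. So it suffices to produce one subsolution with the expansion \eqref{e.barrier-expansion-3d}, and the constants in the gluing scale like $\sigma$, which gives $C_1=C(d)|\sigma|$.

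\emph{Regime (i), $d=2$.} The $\log$ term prevents a bounded patching in original coordinates. I would work instead in hodograph coordinates using Lemma \ref{l.logarithmic-barriers}. The outer piece is $\varsigma\psi_\pm$ with $\varsigma\sim|\sigma|$. The inner piece is a constant-slope perturbation realizing the Neumann value $\partial_d v = (1+\sigma)^{-1}-1$ on $|y'|<1$. I would glue the two with a $\max$ or $\min$ in $y$, then transform back to original coordinates. This is legitimate because the gradients stay small.

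\emph{Regime (ii), supersolutions for any $\sigma<0$.} Here I would take a minimum of two pieces. The first is the half-plane $(x_d)_+$ shifted upward by a large amount. The second is a radial function of the form $(1+\sigma)$ times a harmonic profile; in $d\ge3$ this is the exterior capacitary profile $c(R^{2-d}-|x-x_0|^{2-d})$ scaled to have slope $1+\sigma$. Its $\min$ with the half-plane produces a dip, and large slopes are allowed since supersolutions only require $|\nabla\phi|\le1+\sigma$ on the free boundary inside the cylinder.

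\emph{Regime (iii), subsolutions for $\sigma>0$, $d\ge3$.} I would use the $\max$ of the plane $x_d$ with a steep cone-like harmonic piece $\lambda(|x'|$-localized fundamental solution$)$ centered on the axis. Its free boundary gradient exceeds $1+\sigma$ inside the cylinder, and it decays like $|x|^{2-d}$. This decay makes the $\max$ reduce to the plane far away.

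\emph{Main obstacle.} I expect the hardest part to be verifying, in the nonlinear regimes (ii) and (iii), that the gluing seams lie where the gradient inequalities have the correct sign, while the far-field deviation remains $O(|x|^{2-d})$. For (iii), my first attempt would be to choose the radius of the inner piece large relative to $1+\sigma$ and check the seam explicitly.
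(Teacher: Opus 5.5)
Your proposal has genuine gaps in all three regimes. The nonlinear regimes (ii) and (iii) fail outright, and you are missing the mechanism that makes the free boundary slope arbitrarily small or large.

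\textbf{Regime (iii).} For $\sigma>0$, \eref{barrier-expansion-3d} requires $\phi\le x_d$ on $\{\phi>0\}$. But $\phi=\max\{x_d,g\}_+\ge x_d$ on $\{x_d>0\}\subset\{\phi>0\}$, and together these force $\phi=(x_d)_+$, whose slope $1$ is below $1+\sigma$ in the cylinder. A subsolution must lie below $x_d$ with its free boundary pulled up, so you must \emph{subtract} a potential whose singular set lies in the zero phase. A point sink is still not enough. On the free boundary of $x_d-k|x+te_d|^{2-d}$ with $t\ge0$ one has $|\nabla\phi-e_d|=(d-2)x_d/|x+te_d|\le d-2$, and hyperbolic rescaling does not change slopes, so slopes above $d-1$ are unreachable. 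The paper instead uses the line sink $\phi_R=(x_d-\int_{L_R}|x-y|^{2-d}\,d\mathcal{H}^1(y))_+$:
\begin{itemize}
\item the reflection $y_d\mapsto 2x_d-y_d$ cancels the part of \eref{subsoln-integral} over $[2x_d-R,R]$, which gives $\partial_d\phi_R\ge1$ on the whole free boundary;
\item the top free boundary point $ze_d$ has $z-R\to0$ as $R\to\infty$, so $\partial_d\phi_R(ze_d)=1+(z-R)^{2-d}-(z+R)^{2-d}\to\infty$;
\item rescaling about $ze_d$ then gives slope $\ge1+\sigma$ near the origin.
\end{itemize}

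\textbf{Regime (ii).} Your radial piece satisfies $w\le(1+\sigma)R/(d-2)$ when $d\ge3$, and grows only logarithmically when $d=2$. Hence $\min\{(x_d+H)_+,w\}=w$ high up, and $\phi-x_d\to-\infty$, contradicting $\phi\ge x_d$. Restricting the minimum to a bounded region does not help. Since $|\nabla w|\le1+\sigma<1$ off $B_R(x_0)$, on the vertical ray above the top point of $B_R(x_0)$ one has $w\le(1+\sigma)t<x_d+H$, so the minimum can never be matched back to the plane. Also, supersolutions need \emph{small} slopes, so ``large slopes are allowed'' has the inequality backwards.

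What is needed is a source \emph{added} to the plane. Take $\psi_r^0=x_d+\Phi(x+(r+1)e_d)$ restricted to the component $\Omega_1$ containing $\{x_d\ge0\}$. With $A=x_d+r+1$ and $M=|x+(r+1)e_d|\ge A\ge1$, one has $|D\psi_r|^2=1-2AM^{-d}+M^{2-2d}\le1$ on $\partial\Omega_1$. As $r\searrow\Phi(1)$, the lowest free boundary point $s_0e_d$ approaches the critical point $-re_d$, where $\nabla\psi_r^0=0$. A hyperbolic rescaling about $s_0e_d$ then gives slope $\le1+\sigma$ near the origin, for any $\sigma\in(-1,0)$.

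\textbf{Regime (i).} Your second check, $\Phi>\ell$ for $|x'|\ge2$, is false. Indeed $\ell-\Phi=\sigma x_d+(1+\sigma)a-s+k|x+te_d|^{2-d}\to+\infty$ as $x_d\to\infty$ for every $x'$. So switching from $\max\{\ell,\Phi\}$ to $\Phi$ across $|x'|\approx2$ is discontinuous, and a global maximum behaves like $(1+\sigma)x_d$ at infinity, violating \eref{barrier-expansion-3d}. Because the two pieces have different slopes, the seam has to be compact. The paper patches $\phi_{in}=(1+\sigma)x_d-C\sigma$ with $\phi_{out}=x_d-C\sigma|x|^{2-d}+\sigma x_d|x|^{-d}$ on $\partial B_1$; the dipole term is exactly what makes them agree there. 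It then checks $x\cdot\nabla\phi_{in}<x\cdot\nabla\phi_{out}$ on $\partial B_1$.

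Alternatively, your third bullet alone already suffices for small $\sigma>0$. Take $s=0$, $t=1$ and $k=5^{d/2}\sigma/(d-2)\le1$. Free boundary points in the cylinder have $0<x_d<k$ and $|x+e_d|^2\le5$, so $\partial_d\Phi\ge1+(d-2)5^{-d/2}k=1+\sigma$ there, and $\partial_d\Phi>1$ on the rest of the free boundary. Moreover $0\le x_d-\Phi=k|x+e_d|^{2-d}\le k\min\{1,|x|^{2-d}\}$ on $\{\Phi>0\}$.

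The same unbounded-seam problem affects your $d=2$ hodograph gluing, since $\varsigma y_d$ dominates the logarithmic outer piece as $y_d\to\infty$. The paper matches $\varsigma y_d$ with $2\varsigma(\log|y|+\log(1+\log|y|))+\varsigma y_d/|y|^2$ on $\partial B_1$, and mollifies before inverting the hodograph transform.
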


\subsection{Asymptotics of general solutions}\label{s.asymptotic-expansion-at-infty} Before proceeding to the barrier constructions in \sref{barrier-constructions} we show how to derive \tref{asymptotic-expansion-at-infty} using the barriers from \pref{barrier-prop}.

\begin{proof}[Proof of \tref{asymptotic-expansion-at-infty}]
Note that if $\partial \{u>0\} \cap \overline{B_1(0)} = \emptyset$ then $u$ globally solves the homogeneous Bernoulli problem \eref{bernoulli} and is proper and therefore $u(x) = (x_d+s)_+$ for some $s$. Thus for the remainder of the proof we can assume that 
\begin{equation}\label{e.bf-touches-b1}
    \partial \{u>0\} \cap \overline{B_1(0)} \neq \emptyset.
\end{equation}

{\bf Step 1.} By hypothesis, for all $\ep>0$ there is $R(\ep)$ sufficiently large so that for all $r \geq R(\ep)$
  \[(x_d-\ep)_+ \leq r^{-1}u(rx) \leq (x_d+\ep)_+ \ \hbox{ in } \ B_1(0).\]
  Let $\eta_1(d)>0$ be from \tref{flat-exterior-original-coord}. Then \lref{blow-down-grad} implies that there is $R_1>1$ sufficiently large so that $u\in C^2(\{u>0\}\setminus B_{R_1}(0))$ and 
\[|\grad u(x) - e_d| \leq \eta_1 \ \hbox{ in } \ \{u>0\} \setminus B_{R_1}(0).\]
 \tref{flat-exterior-original-coord} in turn implies that, in $d \geq 3$,
 \begin{equation}\label{e.qual-u-limit}
    u(x) = x_d + s - k|x|^{2-d} + E(x) \ \hbox{ in } \ \{u>0\} \setminus B_{R_1} \ \hbox{ with } \ |E(x)| \leq C(1+|x|)^{1-d}
\end{equation}
while in $d=2$
 \begin{equation}\label{e.qual-u-limit-2d}
    u(x) = x_d +k\log |x| + E(x) \ \hbox{ in } \  \{u>0\} \setminus B_{R_1} \ \hbox{ with } \ \sup|E(x)| < +\infty .
\end{equation}
Note that so far $s$, $k$, and the error term $E(x)$ depend on $R_1$, and $R_1$ depends on the solution $u$ in a non-universal way.

 {\bf Step 2.}  Next we use  $s$ in a sliding barrier argument to control $R_1$ quantitatively. We present separate arguments for $d\geq 3$ and $d=2$.

{ (Case: $d \geq 3$).} Let $\phi=\phi_{\sigma}$ be the outer regular supersolution barrier from \pref{barrier-prop} with $\sigma=\min q \in (-1,0)$. By \eqref{e.barrier-expansion-3d} we have
\[(x_d)_+ \leq \phi(x) \leq (x_d + C_1\min\{1,|x|^{2-d}\} )_+.\]
Now we apply a sliding argument with the family $\phi(x+te_d)$. Note that $\phi(x+te_d) \geq (x_d+t)_+ > u(x)$ in $\overline{\{u>0\}}$ for sufficiently large $t>0$. Also for every $t>s$, by \eref{qual-u-limit} we have $\phi(x+te_d) \geq (x_d+t)_+ > u(x)$ in $\overline{\{u>0\}}$ for $x \in \R^d \setminus B_{R(t)}$.  So by \lref{sliding-comparison} $u(x) \leq \phi(x+te_d)$ for all $t \geq s$ and so we conclude
\begin{equation}\label{e.u-barrier-UB-3}
u(x) \leq (x_d+s+C_1\min\{1,|x+se_d|^{2-d}\})_+.
\end{equation}
We can also bound $s$, using \eref{bf-touches-b1}. So letting $x^0 \in \partial \{u>0\} \cap \overline{B_1}$ be a point realizing \eref{bf-touches-b1}, then
\[x^0_d+s+C_1\min\{1,|x^0+se_d|^{2-d}\} \geq 0\]
and therefore
\[s \geq -C_1-1.\]

Similar arguments using the subsolutions $\phi_{\sigma}$ with $\sigma = \max q>0$ from \pref{barrier-prop} yields
\begin{equation}\label{e.u-barrier-LB-3}
    u(x) \geq (x_d+s-C_2\min\{1,|x+se_d|^{2-d}\})_+ \ \hbox{ and } \ s \leq C_2+1
\end{equation}
with a $C_2$ depending only on $d$ and on $\max q$.  The inequalities \eref{u-barrier-LB-3} and \eref{u-barrier-UB-3} also imply that
\[-C_2 \leq k \leq C_1.\]

Thus, by using the quantitative flatness \eref{u-barrier-UB-3} and\eref{u-barrier-LB-3} in \tref{flat-implies-c1alpha}, there is a radius $R_0=R_0(C_1,C_2, \eta_1)=R_0(d, \min q, \max q)$ so that 
\begin{equation}\label{Lip_bound} |\grad u - e_d| \leq \eta_1 \ \hbox{ on } \ \{u>0\} \setminus B_{R_0}(0).
\end{equation}
As before, but now with quantified $R_0$,  \tref{flat-exterior-original-coord} implies that, for $x \in \{u>0\} \setminus B_{R_0}$,
\begin{align*}
    |x|^{d-1}|u(x) - (x_d+s+k|x|^{2-d})| &\leq C\osc_{(B_{2R_0} \setminus B_{R_0}) \cap \{u>0\}} (u(x) - x_d) \\
    &
    \leq C(d,\min q,\max q)
\end{align*}
where the last inequality again uses \eqref{Lip_bound}.

 { (Case: $d =2$)}. Here we assume that $\max |q| \leq \sigma_0$ to use the barriers given in \pref{barrier-prop}. Let $\phi_{\sigma}$ be as given in  \pref{barrier-prop} with $\sigma = \min q \in (-1,0] $ satisfying \eqref{e.barrier-eqn} and \eqref{e.barrier-expansion-3d}. We claim that $k \geq -C_1$. Suppose otherwise, $k < - C_1$. Since $u$ is {proper} it is one-sided flat, since $k < -C_1$ in \eref{qual-u-limit} it must be flat from above, $u(x) \leq (x_d+T)_+$ for some $T \in \R$. We perform a sliding comparison with $\phi(x+te_d)$. For $t > T$ then $\phi(x+te_d) > u(x)$ in $\overline{\{u>0\}}$.  Let $k < k' < -C_1$. For any $t \in \R$ there is $R$ sufficiently large so that, for $x \in \{u>0\} \setminus B_R$,
\[\phi(x+te_d) \geq (x_d+t-C_1\max\{1,\log |x+te_d|\})_+ > (x_d+k'\log |x|)_+ > u(x). \]  Then Lemma ~\ref{l.sliding-comparison} implies $\phi(x+te_d) \geq u(x)$ for all $t$, yielding a contradiction.

Arguing similarly with the subsolution barriers from \pref{barrier-prop} shows that $k \leq C_1$.

\end{proof}

\subsection{Barriers}\label{s.barrier-constructions}
 In this section we construct the barriers presented in Proposition~\ref{p.barrier-prop}.
The construction of the barriers are given in the order of increasing difficulty. We begin with the simplest construction, which is for small $\sigma$, i.e. small $\|q\|_{L^\infty}$, in $d=3$.

For small $\sigma$, our barrier construction is relatively simple, by patching of inner and outer parts that are  $O(\sigma)$-perturbations of the planar profile, based on the Laplace fundamental solution and its derivative. The barrier construction in $d=2$ is in hodograph coordinates, since we can only construct the two dimensional fundamental solution type barrier in that coordinate system. This approach only works when $\sigma$ is sufficiently small, below a universal threshold.  This is, perhaps, natural since the construction views subsolutions and supersolutions more or less symmetrically.

Both the nonlinearity and the asymmetry between advancing and receding become more severe for large $\sigma$. Our perturbations are more nonlinear in this case, and involve planting a sizable sink and source term, respectively, to pull or push the planar profile.  Furthermore the construction of subsolution and supersolution barriers are no longer symmetrical and have a slightly different geometry.

\subsubsection{Barriers for small defects: $d\geq3$}\label{s.small-sigma-3d}

Let $C>1$ to be chosen sufficiently large depending on dimension and $|\sigma| \leq \sigma_0$, $\sigma>0$ in the subsolution case and $\sigma<0$ in the supersolution case. We define the inner solution
\[\phi_{in}(x) := (1+\sigma)x_d-C\sigma\]
and the outer solution
\[\phi_{out}(x) := x_d -C\sigma|x|^{2-d}+ \sigma \frac{x_d}{|x|^d}.\]
Note that $x_d/|x|^d$ is a constant multiple of $\partial_{x_d} \Phi(x)$, and thus is harmonic away from the origin. Therefore $\phi_{out}$ is harmonic away from the origin and $\phi_{in}$, being linear, is harmonic everywhere.  Define the patching
\[\phi(x):= \begin{cases}
    \phi_{in} (x) & |x| < 1\\
    \phi_{out} (x) & |x| \geq 1.
\end{cases}\]
Note that on $|x| = 1$
\[\phi_{out}(x) = x_d-C\sigma+\sigma x_d = \phi_{in}(x).\]
Thus $\phi$ is continuous across the patch on $\partial B_1$. The proposed subsolution / supersolution of the Bernoulli problem will be $\phi(x)_+$.

\begin{lemma}
    For $|\sigma| \leq \sigma_0(d)$ the function $\phi(x)_+$ is a subsolution of \eref{barrier-eqn}, in the case $\sigma>0$, and is a supersolution of \eref{barrier-eqn}, in the case $\sigma<0$.
\end{lemma}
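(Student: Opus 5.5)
The plan is to verify the sub/supersolution property of $\phi_+$ in three pieces: in the positivity set away from $\partial B_1$, across the patching sphere $\partial B_1$, and on the free boundary $\partial\{\phi>0\}$. I take $\sigma>0$ (subsolution) first; the case $\sigma<0$ is the same computation with all inequalities reversed, and I flag below the one place where the signs need extra care. Harmonicity away from $\partial B_1$ is already noted: $\phi_{in}$ is linear and $\phi_{out}$ is harmonic on $|x|\ge 1$.

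\textbf{Step 1: the patch on $\partial B_1$.} Write $x=r\theta$. A direct computation gives
\[
\partial_r\phi_{out}-\partial_r\phi_{in}=\sigma\big(C(d-2)-d\,\theta_d\big)\quad\text{on } r=1 .
\]
Choosing $C\ge d$ (using $d\ge3$), this quantity has the sign of $\sigma$. Together with $\phi_{in}=\phi_{out}$ on $\partial B_1$, this shows the kink is convex when $\sigma>0$. Locally $\phi$ is then a maximum of two harmonic functions, hence subharmonic in the viscosity sense. For $\sigma<0$ the kink is concave, $\phi$ is locally a minimum of harmonic functions, and it is superharmonic.

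\textbf{Step 2: the free boundary inside $B_1$.} There the free boundary is the plane $\{x_d=C\sigma/(1+\sigma)\}$, which meets $B_1$ only where $|x'|<1$, i.e. inside the cylinder. On it $|\nabla\phi|=1+\sigma$, so the condition holds with equality. Points of $\partial B_1$ lying on the free boundary are handled by Step 1: the relevant one-sided gradient there is the larger (resp. smaller) of the two, so this reduces to the outer computation in Step 3.

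\textbf{Step 3: the free boundary outside $B_1$ (the main computation).} On $\{\phi_{out}=0\}$ we have
\[
x_d=\frac{C\sigma|x|^{2-d}}{1+\sigma|x|^{-d}},
\]
so $x_d$ has the sign of $\sigma$ and $|x_d|\le 2C|\sigma|$. Expanding,
\[
\partial_d\phi_{out}=1+C\sigma(d-2)x_d|x|^{-d}+\sigma|x|^{-d}\big(1-d\,x_d^2|x|^{-2}\big),
\]
and $|\nabla'\phi_{out}|=O(C|\sigma||x|^{1-d})$. There are two regions to consider.

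\emph{Outside the cylinder}, the required bound is $|\nabla\phi|\ge1$. For $\sigma>0$ the second term in $\partial_d\phi_{out}$ equals $C^2\sigma^2(d-2)|x|^{2-2d}(1+O(\sigma))\ge0$, and the third term is $\ge0$ because $x_d^2|x|^{-2}\le 4C^2\sigma^2\ll 1/d$. Hence $\partial_d\phi_{out}\ge1$.

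\emph{Inside the cylinder} with $|x|\ge1$, the constraint $x_d=O(C\sigma)$ forces $1\le|x|\le1+O(C^2\sigma^2)$. The required bound is $\partial_d\phi\ge1+\sigma$. Here the positive term $C^2\sigma^2(d-2)$ dominates both $\sigma(|x|^{-d}-1)=O(C^2\sigma^3)$ and the term $-d\sigma x_d^2=O(C^2\sigma^3)$, once $\sigma\le\sigma_0(d,C)$.

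\textbf{The supersolution case $\sigma<0$.} This is where I expect the main difficulty. The $C^2\sigma^2$ terms in $\partial_d\phi_{out}$ and $|\nabla'\phi_{out}|^2$ are now positive, which is the wrong sign. They must be absorbed by the negative term $\sigma|x|^{-d}$, which works because
\[
C^2\sigma^2|x|^{2-2d}\le C^2|\sigma|\cdot|\sigma||x|^{-d}\quad\text{for } |x|\ge1,\ d\ge 3 .
\]
This holds provided $\sigma_0\ll C^{-2}$, and it is exactly why $\sigma_0$ must be small depending on $C(d)$. Inside the cylinder the analogous absorption yields $|\nabla\phi|\le1+\sigma$.

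\textbf{Far-field bound.} Finally I would record \eref{barrier-expansion-3d} with $C_1=C(d)|\sigma|$. Outside $B_1$ this follows from $x_d-\phi_{out}=C\sigma|x|^{2-d}-\sigma x_d|x|^{-d}$, using $|x_d||x|^{-d}\le|x|^{1-d}$ together with the positivity constraint $x_d>-C|\sigma|$. Inside $B_1$ it follows from $x_d-\phi_{in}=\sigma(C-x_d)$. In both cases the sign is that of $\sigma$ and the size is $O(|\sigma|\min\{1,|x|^{2-d}\})$.
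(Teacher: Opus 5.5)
\textbf{Verdict: genuine gap, in the case $\sigma<0$ inside the cylinder.}

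Your plan follows the paper's proof: the radial-derivative jump across $\partial B_1$, the exact slope $1+\sigma$ of the linear piece, and the expansion of $\nabla\phi_{out}$ on the outer free boundary. For $\sigma>0$ you are more careful than the paper. You correctly notice that the free boundary contains points with $|x|\ge 1$, $|x'|<1$. These form a thin ring just outside the equator, with $|x|\le 1+O(C^2\sigma^2)$, where the target is $1+\sigma$ rather than $1$. Your argument that $C\sigma(d-2)x_d|x|^{-d}\approx C^2(d-2)\sigma^2>0$ beats the $O(C^2\sigma^3)$ losses there is correct. The paper only compares $|\nabla\phi_{out}|^2=1+2\sigma|x|^{-d}+O(\sigma^2)$ with $1$ and never examines this ring.

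\emph{Where it breaks.} The gap is your sentence that for $\sigma<0$ ``the analogous absorption yields $|\nabla\phi|\le 1+\sigma$'' inside the cylinder.
\begin{itemize}
\item Outside the cylinder the absorption works because the target is $1$, so the negative term $2\sigma|x|^{-d}$ is pure slack.
\item In the ring the target is $(1+\sigma)^2=1+2\sigma+\sigma^2$, so $2\sigma|x|^{-d}$ is entirely used up. What remains, $2|\sigma|(1-|x|^{-d})$, is only $O(C^2|\sigma|^3)$.
\item The positive terms there are of order $C^2\sigma^2$. The cross term $2C(d-2)\sigma x_d|x|^{-d}\approx 2C^2(d-2)\sigma^2$ is positive for either sign of $\sigma$, since $x_d$ has the sign of $\sigma$ on the free boundary. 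Also $|\nabla'\phi_{out}|^2\approx C^2(d-2)^2\sigma^2$.
\end{itemize}
Explicitly, at free boundary points with $|x|\downarrow 1$ and $|x'|<1$, one has $x_d\to C\sigma/(1+\sigma)$ and $\nabla\phi_{out}=(1+\sigma)e_d+\sigma\big(C(d-2)-dx_d\big)x$. Hence
\[
|\nabla\phi_{out}|^2=(1+\sigma)^2+C^2d(d-2)\sigma^2+O(\sigma^3),
\]
so $|\nabla\phi|>1+\sigma$ there for every small $\sigma<0$. For example, $d=3$, $C=4$, $\sigma=-0.01$ gives $|\nabla\phi|\approx 0.9925>0.99$.

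No choice of $\sigma_0$ or $C$ fixes this. With the patch on $\partial B_1$ and the defect on the full cylinder $B_1'\times\R$, $\phi_+$ is in fact not a supersolution. The paper's ``symmetric'' argument misses the same points.

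\emph{Repair.} Separate the patching sphere from the cylinder, as the paper does in $d=2$ by placing the defect in $B_{1/2}$. Use $\tilde\phi(x)=2\phi(x/2)$, which patches on $\partial B_2$.
\begin{itemize}
\item Since $|x_d|\le 4C|\sigma|$ on its free boundary, every free boundary point with $|x'|\le 1$ lies in $B_2$, where $|\nabla\tilde\phi|=1+\sigma$ exactly.
\item Outside $B_2$ only the comparison with $1$ is needed, which your absorption argument and your $\sigma>0$ argument already handle.
\item The far-field bound changes only by a factor $2^{d-1}$.
\end{itemize}

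(A minor point: with $C=d$ and $d=3$, the jump $\sigma(C(d-2)-d\theta_d)$ vanishes at the north pole. Either take $C>d/(d-2)$ strictly, as the paper does, or note that the distributional Laplacian of $\phi$ across $\partial B_1$ is the jump times surface measure, which has the right sign.)
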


\begin{proof}
We claim that, in the subsolution case $\sigma>0$, $\phi(x) = \max\{\phi_{in}(x),\phi_{out}(x)\}$ in a neighborhood of $\partial B_1$. It suffices to show that
\[x\cdot \nabla \phi_{in }(x) < x\cdot \nabla \phi_{out}(x) \ \hbox{ on } \ x \in \partial B_1.\]
We will check this by direct computation. First we record
\begin{equation}\label{e.grad-phiout}
    \grad \phi_{out} = e_d+\sigma\bigg[C(d-2) \frac{x}{|x|^d}+ \frac{e_d}{|x|^d} -d\frac{x_dx}{|x|^{d+2}} \bigg].
\end{equation}
So on $x \in \partial B_1$
\[x\cdot \nabla \phi_{out}(x) = x_d+\sigma[C(d-2)+(1-d)x_d] \ \hbox{ and } \ x\cdot \nabla \phi_{in }(x) = (1+\sigma)x_d.\]
So fixing $C = \frac{d}{d-2}+1$ then, on $x \in \partial B_1$ using that $1 \geq x_d$ on that set,
\[x\cdot \nabla \phi_{out}(x) > x_d + \sigma[ d+(1-d)x_d] \geq x_d + \sigma[ d x_d+(1-d)x_d] = (1+\sigma)x_d = x\cdot \nabla \phi_{in }(x).\]
By a symmetrical argument, in the supersolution case $\sigma<0$, $\phi(x) = \min\{\phi_{in}(x),\phi_{out}(x)\}$ in a neighborhood of $\partial B_1$.

Next we check the free boundary condition. The free boundary condition inside $B_1$ is immediate since the solution is linear, so we only need to check for the outer solution. Then, using \eref{grad-phiout} and expanding the quadratic, 
\[|\grad \phi_{out}|^2 = 1 + 2\sigma\bigg[\frac{1}{|x|^d}+C(d-2) \frac{x_d}{|x|^d} -d\frac{x_d^2}{|x|^{d+2}} \bigg]+O\left(\frac{\sigma^2}{|x|^{2(d-1)}}\right)\]
Let us use that 
\[x_d = C \sigma |x|^{2-d} \frac{1}{1-\sigma|x|^{-d}} \ \hbox{ on } \ \partial \{\phi >0\} \setminus B_1.\]
So then, for $\sigma \leq 1$ and $|x| \geq 1$,
\[|\grad \phi_{out}|^2 = 1 + 2\sigma\frac{1}{|x|^d}+O\left(\frac{\sigma^2}{|x|^{2(d-1)}}\right).\]
Note that $2(d-1) > d$ in dimensions $d\geq 3$.  Then $|\grad \phi_{out}|^2 >1$ for $|x| \geq 1$ as long as we choose $0 < \sigma \leq \sigma_0(d)$ with sufficiently small $\sigma_0$ depending on dimension.  The supersolution case $-\sigma_0 \leq \sigma < 0$ is symmetrical.

Finally note that
\[ (x_d - (C+1)\sigma)_+ \leq \phi(x)_+ \leq (x_d + (C+1)\sigma)_+.\]
\end{proof}

\subsubsection{Barriers for small defects: $d=2$}\label{s.small-sigma-2d}
 Here we will utilize the exterior barrier from \lref{logarithmic-barriers} in hodograph coordinates, to construct  barriers. While our construction is similar to  the patched barrier in higher dimensions from \sref{small-sigma-3d}, we face additional technical challenges here since we need to smooth out the solution at the patching in order to invert the hodograph transform. We go around this with standard mollifier and keep the computations to minimum.

 Let us first point out the issue in original coordinates via a Lemma, which we will use later for other purposes.  
 
 \begin{lemma}\label{l.logarithmic-supersolutions}
The function
 \[\phi(x) :=(x_d+\sigma \log |x|+ s)_+\]
 is a supersolution of \eref{bernoulli} in $\R^d \setminus B_3$ as long as $\sigma s \geq 0$.
 \end{lemma}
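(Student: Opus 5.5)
The plan is a direct computation, carried out in $d=2$, the setting of this subsection. There $\log|x|$ is harmonic away from the origin, so $\Phi(x) := x_d+\sigma\log|x|+s$ is smooth and harmonic in $\R^2\setminus B_3$. In particular $\phi=\Phi_+$ is harmonic in $\{\phi>0\}\setminus \overline{B_3}$, so the interior condition holds trivially. The whole argument therefore reduces to checking the free boundary inequality $|\grad \Phi|\leq 1$ on $\{\Phi=0\}\setminus B_3$.

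\textbf{Step 1: the gradient.} We have $\grad\Phi = e_d+\sigma x/|x|^2$, and hence
\[|\grad \Phi|^2 = 1+\frac{2\sigma x_d}{|x|^2}+\frac{\sigma^2}{|x|^2}.\]

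\textbf{Step 2: the free boundary.} On the free boundary $\{\Phi=0\}$ we substitute $x_d = -\sigma\log|x|-s$, which gives
\[|\grad\Phi|^2 = 1+\frac{\sigma^2(1-2\log|x|)-2\sigma s}{|x|^2}.\]
For $|x|\geq 3$ we have $2\log|x|\geq 2\log 3>1$, so the $\sigma^2$ term is nonpositive. The term $-2\sigma s$ is nonpositive by the hypothesis $\sigma s\geq 0$. Hence $|\grad\Phi|\leq 1$ on $\partial\{\phi>0\}\setminus B_3$. This step is where both the restriction to $\R^2\setminus B_3$ and the sign condition are used, and it is the only real content of the lemma.

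\textbf{Step 3: the viscosity supersolution property.} It remains to pass from the classical inequality to the viscosity sense. Suppose a test function touches $\phi$ from below at a free boundary point $x_0$ and has nonvanishing gradient there. Near $x_0$ the positivity set of the test function lies inside $\{\Phi>0\}$, and $\Phi$ is smooth across $x_0$. Comparing the two functions along the inward normal direction shows that the test function's gradient at $x_0$ is at most $|\grad\Phi(x_0)|\leq 1$, which is exactly the supersolution condition. If $\grad\Phi(x_0)$ happens to vanish, the touching test function must also have zero normal slope at $x_0$, so the condition still holds.

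I expect no real obstacle here. The only point needing care is noting that the threshold radius $3$ is chosen precisely so that $\log|x|>1/2$, which makes the $\sigma^2$ correction in Step 2 favorable.
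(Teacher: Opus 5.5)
Your proof is correct and follows the paper's argument: harmonicity of $x_d+\sigma\log|x|+s$ in $d=2$, the formula $|\grad\phi|^2 = 1+2\sigma x_d|x|^{-2}+\sigma^2|x|^{-2}$, substitution of the zero-set relation, and a sign check using $2\log 3>1$ and $\sigma s\geq 0$. Your expression $1+|x|^{-2}\bigl[\sigma^2(1-2\log|x|)-2\sigma s\bigr]$ is the accurate one; the paper's displayed version has a slip in the constant inside the bracket, which does not affect the conclusion. Your Step 3 makes explicit the standard passage from classical to viscosity supersolution, which the paper leaves implicit.
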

 \begin{proof}
     Note that $\phi$ is harmonic in its positivity set away from $0$. The zero set is on
\[0 = \phi (x) = x_d + \sigma \log |x|+s\]
and the slope is
\[|\grad \phi(x)|^2 = 1  + 2 \sigma\frac{x_d}{|x|^2} + \sigma^2\frac{1}{|x|^2}.\]
Evaluating the slope on the zero set, by plugging in $x_d = -\sigma\log |x|$,
\[|\grad \phi(x)|^2 =1-2\frac{1}{|x|^2}\left[\sigma^2\log |x|+\sigma s - 1\right].\]
So, since $\log 3>1$ and $\sigma s \geq 0$, we conclude that $|\grad \phi(x)|^2 < 1$ on $\partial \{\phi>0\} \setminus B_3$.
 \end{proof}
 This is actually fine for the purposes of \tref{asymptotic-expansion-at-infty} in the case $\sigma<0$, but, unfortunately, this is the wrong direction in the case $\sigma>0$.  We have not discovered any elementary way to fix this in the original coordinates.

We will work in the same hodograph coordinate system which was introduced in Section \ref{s.hodograph}. We will denote $y$ the variable in hodograph coordinates and $x$ the variable in standard coordinates as we did before.  Note that our conventions for the hodograph transform does switch the role of sub and supersolutions. 

Let $|\sigma| \leq \sigma_0$ a sufficiently small constant to be specified via the computation. We will define an inner solution, to be used inside $B_2(0)^+$, and an outer solution, to be used outside $B_2(0)^+$. In the case $\sigma>0$ we will construct a supersolution, and in the case $\sigma<0$ we will construct a subsolution of the hodograph equation
\begin{equation}\label{e.hodo-PDE-barr}
    \begin{cases}
          \textup{tr}(A(\grad_y v)D^2_yv) =0 & \hbox{ in } \ \{y_d>0\}, \\
    (1+\sigma {\bf 1}_{B_{1/2}})(1+\partial_{y_d}v) =\sqrt{1+|\grad_y 'v|^2}  & \hbox{ on } \partial \{y_d>0\}. 
    \end{cases}
\end{equation}
This is analogous to \eref{barrier-eqn} on the hodograph side. The choice of putting the defect in $B_{1/2}$ is for convenience, so we can patch on $\partial B_1$ with a little room. We will need to construct smooth, at least $C^1$, sub/supersolutions in order to invert the hodograph transform. But we begin with a non-smooth construction.

First define, using $|\sigma| < 1$, 
\[\varsigma(\sigma) := -\frac{\sigma}{1+\sigma} \ \hbox{ so that } \ (1+\sigma)(1+\varsigma) = 1.\]
Define the inner solution
\[\psi_{in}(y):=\varsigma y_d  \]
and the outer solution
\[\psi_{out}(y):= 2\varsigma (\log|y| + \log(1+\log|y|)) +\varsigma \frac{ y_d}{|y|^2}. \]
Then define the patched solution
\begin{equation}\label{patched}
    \psi(x):= \begin{cases}
        \psi_{in}(y) & y \in B_1(0)\\
        \psi_{out}(y) & y \not\in B_1(0).
    \end{cases}
\end{equation}
Notice that $\psi$, thus defined, is continuous on $\R^d$. 

As in the previous section, we will need to check interior and boundary sub and supersolution conditions (see \lref{logarithmic-barriers}) as well as the correct gradient discontinuity at the patching on $\partial  B_1$. Finally we will need to do an inverse hodograph transform to get a sub/supersolution in the original coordinates. Implementing this idea is technically tricky, since the hodograph and inverse hodograph transform require $C^1$ regularity. We achieve this regularity by using a standard  mollifier.  {We skip the proof since it is standard but lengthy.\footnote{For more details see \url{https://doi.org/10.48550/arXiv.2512.11152}}}

\begin{lemma}\label{l.2d-smoothed-barriers}
    Let $|\sigma | \leq \sigma_0 < 1$, $\varsigma: = -\sigma/(1+\sigma)$, and $0 < \ep \leq \frac{1}{2}$, and let $\eta_\e(x):= \e^{-d} \eta(\e x)$, where $\eta$ is the standard radially symmetric mollifier supported in $B_1(0)$. If $\sigma_0>0$ is sufficiently small, then for $\varsigma>0$ (resp. $\varsigma <0$) $\psi(x)$ given in \eqref{patched} and $\eta_\e *\psi$ are  subsolutions (resp. supersolutions) of \eref{hodo-PDE-barr}. 
\end{lemma}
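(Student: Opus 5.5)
The plan is to verify the sub/supersolution property for the unmollified patched function $\psi$ first, in the viscosity sense. Then I will show that mollification preserves it, using the distributional form of the interior operator and the fact that every inequality has a margin of order $|\varsigma|$ while every error is of order $\varsigma^2$. I treat $\varsigma>0$ (subsolution); $\varsigma<0$ is symmetric, since \eref{A-property} and \eref{N-property} are invariant under $v\mapsto -v$.

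\emph{Step 1 (the pieces).} Inside $B_1^+$ the function $\psi_{in}=\varsigma y_d$ is linear, so the interior equation holds exactly.
\begin{itemize}
\item On $B_{1/2}'$ we have $\nabla'\psi_{in}=0$ and $\partial_d\psi_{in}=\varsigma$, so $(1+\sigma)(1+\varsigma)=1=\sqrt{1+0}$ holds with equality.
\item On $B_1'\setminus B_{1/2}'$ there is no defect, and $\partial_d\psi_{in}=\varsigma>0=N(0)$ is the strict subsolution inequality.
\end{itemize}
Outside $B_1$ I repeat the computation of \lref{logarithmic-barriers} with the coefficients $2\varsigma$ on the logarithmic terms. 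The trace of the Hessian of $2\varsigma\log(1+\log|y|)$ contributes $+2\varsigma|y|^{-2}(1+\log|y|)^{-2}$, since the trace-free part vanishes in $d=2$. The nonlinearity contributes only $O(\varsigma^2|y|^{-3})$ by \eref{A-property}. On $\{y_d=0\}$ we have $\partial_d\psi_{out}=\varsigma|y|^{-2}$, while $N(\nabla'\psi_{out})\le C\varsigma^2|y|^{-2}$ by \eref{N-property}. Both inequalities are therefore strict for $\sigma_0$ small, with margin of order $\varsigma$ times a positive weight.

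\emph{Step 2 (the patch).} On $|y|=1$ the logarithms vanish, so $\psi_{out}=\varsigma y_d=\psi_{in}$ and $\psi$ is continuous. The radial derivatives on $\partial B_1$ are $y\cdot\nabla\psi_{out}=4\varsigma-\varsigma y_d$ and $y\cdot\nabla\psi_{in}=\varsigma y_d$. Their difference is $4\varsigma-2\varsigma y_d\ge 2\varsigma>0$, so near $\partial B_1$ we have $\psi=\max\{\psi_{in},\psi_{out}\}$. A maximum of two subsolutions is a viscosity subsolution, including at boundary points of $\partial B_1\cap\{y_d=0\}$, where both pieces satisfy the strict Neumann inequality (recall $|y|=1$ lies outside the defect $B_{1/2}$). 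This settles the claim for $\psi$.

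\emph{Step 3 (mollification; the main obstacle).} Since the operator is nonlinear, convexity arguments do not pass through $\eta_\e*$. Instead I extend both formulas to $\{y_d<0\}$; they are defined there, and the patch is still a max. I then write
\[\textup{tr}(A(\nabla\psi_\e)D^2\psi_\e)=\eta_\e*\Delta\psi+\textup{tr}((A(\nabla\psi_\e)-I)D^2\psi_\e),\qquad \psi_\e:=\eta_\e*\psi .\]
Distributionally, $\Delta\psi$ is the nonnegative smooth part from Step 1 plus a positive measure on $\partial B_1$ with density at least $2\varsigma$, coming from the jump in the normal derivative. Hence $\eta_\e*\Delta\psi\ge 0$, and within distance $\e$ of $\partial B_1$ it is at least $c\varsigma/\e$. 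The error term is bounded by $C|\nabla\psi_\e|\,|D^2\psi_\e|$. Here $|\nabla\psi_\e|\le C\varsigma$, and $|D^2\psi_\e|\le C\varsigma/\e$ near the kink and $C\varsigma|y|^{-2}$ away from it. In $B_{1-\e}$ the function $\psi_\e$ is exactly linear, so the error vanishes there. Far from the kink the $\varsigma|y|^{-2}(\log)^{-2}$ margin, mollified over scale $\e\le 1/2$ (comparable weights), absorbs an $O(\varsigma^2|y|^{-3})$ error. Near the kink the $c\varsigma/\e$ margin beats $C\varsigma^2/\e$ once $\sigma_0$ is small; this balance at the kink is the step I expect to need the most care.

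For the Neumann condition I use $\partial_d\psi_\e=\eta_\e*\partial_d\psi$ and $\nabla'\psi_\e=\eta_\e*\nabla'\psi$.
\begin{itemize}
\item At $y\in B_{1/2}'$ the mollification ball lies in $B_1$, because $\e\le 1/2$; this is exactly why the defect sits in $B_{1/2}$ and the patch on $\partial B_1$. There $\psi_\e=\varsigma y_d$ and the equality persists.
\item Elsewhere on $\{y_d=0\}$, $\partial_d\psi$ near the boundary equals $\varsigma$ inside $B_1$, and $\varsigma|y|^{-2}+O(\varsigma|y_d||y|^{-3})$ outside. So $\eta_\e*\partial_d\psi\ge c\varsigma|y|^{-2}$, which dominates $N(\nabla'\psi_\e)\le C\varsigma^2|y|^{-2}$.
\end{itemize}
Together these give the claimed subsolution property of $\eta_\e*\psi$, with all constants uniform in $\e\in(0,1/2]$.
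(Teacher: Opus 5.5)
\textbf{Sign error in Step 1, which also breaks Step 3.} In $d=2$ the function $h=1+\log|y|$ is positive and harmonic on $\{|y|>e^{-1}\}$, so $\log h$ is \emph{super}harmonic:
\[
\Delta \log(1+\log|y|) = -\frac{|\nabla h|^2}{h^2} = -\frac{1}{|y|^2(1+\log|y|)^2}.
\]
Equivalently, in the variable $t=\log|y|$ it is the concave function $\log(1+t)$. Thus for $\varsigma>0$
\[
\textup{tr}(A(\nabla\psi_{out})D^2\psi_{out}) = -\frac{2\varsigma}{|y|^2(1+\log|y|)^2} + O(\varsigma^2|y|^{-3}) < 0 \quad \hbox{in } \{y_d>0\}\setminus\overline{B_1}.
\]
The main term dominates once $\varsigma$ is small, because $(1+\log|y|)^2\le \frac{4}{e}|y|$ on $|y|\ge 1$; for any $\varsigma$ it dominates as $|y|\to\infty$.

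So $\psi_{out}$ is a strict interior \emph{super}solution, while its Neumann inequality is of subsolution type. Testing with $\psi$ itself shows that $\psi$ fails the subsolution property at every point of $\{y_d>0\}\setminus\overline{B_1}$, and shrinking $\sigma_0$ does not help. For the same reason, the absolutely continuous part of $\Delta\psi$ in your Step 3 is negative, so $\eta_\e*\psi$ also fails for $|y|\ge 2$. The case $\varsigma<0$ fails in the mirror-image way.

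You inherited this slip from the paper. The Hessian formula in the proof of \lref{logarithmic-barriers} should carry a minus sign on the term $|x|^{-2}(1+\log|x|)^{-2}\frac{x\otimes x}{|x|^2}$, which is presumably why $\psi_{out}$ contains $+\log(1+\log|y|)$. With $\psi$ exactly as in \eqref{patched}, the statement is false, and the barrier has to be changed before any proof can work.

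\textbf{A repair that keeps your scheme.} Merely flipping the sign of the log-log term is not enough. The radial derivative of $\log r-\log(1+\log r)$ vanishes at $r=1$, so the jump across $\partial B_1$ would become $-2\varsigma y_d$ and the max structure of Step 2 is lost. Rebalance the coefficients instead, for example
\[
\psi_{out}=\varsigma\left(4\log|y|-\log(1+\log|y|)+y_d|y|^{-2}\right).
\]
This function has the properties you need:
it equals $\varsigma y_d$ on $\partial B_1$;
it has $\Delta\psi_{out}=\varsigma|y|^{-2}(1+\log|y|)^{-2}$;
it has $\partial_d\psi_{out}=\varsigma|y|^{-2}$ on $\{y_d=0\}$;
and its radial jump on $\partial B_1$ is $\varsigma(3-2y_d)\ge\varsigma$, also in your extension to $y_d<0$.

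\textbf{Two estimates in Step 3 also need correcting.}
First, the bound $\eta_\e*\Delta\psi\ge c\varsigma/\e$ fails near the outer edge of the $\e$-neighbourhood of $\partial B_1$. Argue pointwise instead. The singular part of $D^2\psi$ is the nonnegative rank-one measure $[\partial_r\psi]\,\nu\otimes\nu\,\mathcal{H}^1\lfloor\partial B_1$, whose trace is exactly the singular part of $\Delta\psi$. Also $|D^2\psi_{out}|\le C\Delta\psi_{out}$ on $1\le|z|\le 3$. Together these give an error at most $C\varsigma\,\eta_\e*\Delta\psi$ near the kink.

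Second, in the extended region
\[
\partial_d\psi_{out}(z)=\varsigma(|z'|^2-z_d^2)|z|^{-4}+O(\varsigma|z_d||z|^{-2}),
\]
not $+O(\varsigma|z_d||z|^{-3})$. The correction comes from $\partial_d\log|z|=z_d|z|^{-2}$ and is comparable to the main term. It drops out only because it is odd in $z_d$, and a radial mollifier centred on $\{y_d=0\}$ averages it to zero.
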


Then, to conclude the proof of the relevant piece of \pref{barrier-prop} we can just apply an inverse hodograph transform.
\begin{corollary}\label{c.2d-smoothed-inverse-hodo}
    Let $\tilde{\psi}_\sigma := \eta_{1/2} * \psi_\sigma$ and let $\phi_\sigma$ be the inverse hodograph transform of $\tilde{\psi}_\sigma$. Then, up to a hyperbolic rescaling, $\phi_\sigma$ is a supersolution (case $\sigma<0$) or subsolution (case $\sigma >0$) of \eref{barrier-eqn} satisfying the bound \eref{barrier-expansion-2d} with $C_1(d,\sigma) = C(d) |\sigma|$.
\end{corollary}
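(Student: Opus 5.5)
Since $\tilde\psi_\sigma$ is only defined in hodograph variables, the plan is to show it is regular enough to invert the transform of \sref{hodograph}, then transfer the sub/supersolution property and the far-field bound back to $x$. First, $\tilde\psi_\sigma$ is smooth, being a mollification. Its gradient is small, of size $O(|\varsigma|)$. To see this, note that $\grad\psi_{in}=\varsigma e_d$ and $|\grad\psi_{out}|\le C|\varsigma||y|^{-1}$ on $|y|\ge1$, as in \eref{psiout-grad-formula}; mollifying at scale $1/2$ preserves this up to a constant. So for $\sigma_0$ small, $|\grad\tilde\psi_\sigma|\le C|\sigma|\le 1/2$.

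Define the inverse hodograph map $y\mapsto x=(y',\,y_d+\tilde\psi_\sigma(y))$. Its Jacobian has $\partial x_d/\partial y_d=1+\partial_d\tilde\psi_\sigma\ge 1/2$, so it is a diffeomorphism of $\overline{\R^d_+}$ onto $\overline{\{\phi_\sigma>0\}}$. Here $\phi_\sigma$ is defined implicitly by $\phi_\sigma(x',y_d+\tilde\psi_\sigma(y))=y_d$, and is extended by $0$ below the graph $x_d=\tilde\psi_\sigma(y',0)$. The computation of \sref{hodograph}, read backwards, shows that the interior operator $\textup{tr}(A(\grad v)D^2v)$ is a positive multiple of $\Delta_x\phi_\sigma$. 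It also shows that the boundary relation in \eref{hodo-PDE-barr} is exactly $|\grad\phi_\sigma|(1+\sigma{\bf 1})=1$ rewritten as $(1+\sigma)(1+\partial_d v)=\sqrt{1+|\grad' v|^2}$. Therefore a strict inequality in one direction for $\tilde\psi_\sigma$, given by \lref{2d-smoothed-barriers}, becomes the corresponding inequality in the opposite direction for $\phi_\sigma$; this is the sign switch noted before the lemma. We then get $\Delta\phi_\sigma\ge0$ (resp. $\le0$) in the positivity set and $|\grad\phi_\sigma|\ge$ (resp. $\le$) the coefficient on the free boundary. This yields a classical smooth subsolution when $\sigma>0$ (so $\varsigma<0$) and a supersolution when $\sigma<0$.

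The one mismatch is the defect region. In hodograph variables the defect is $B_{1/2}\cap\{y_d=0\}$, which corresponds to $|x'|<1/2$ on the free boundary, whereas \eref{barrier-eqn} uses $B_1'\times\R$. The hyperbolic rescaling $v\mapsto rv(\cdot/r)$, equivalently $\phi\mapsto r\phi(\cdot/r)$, preserves both equations and the gradient bounds, so a single rescaling by a fixed factor aligns the cylinders. Because the rescaling only multiplies the far-field constants by a factor depending on $d$, the constants stay of order $C(d)|\sigma|$.

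For \eref{barrier-expansion-2d}, note that at the point $x=(y',y_d+\tilde\psi(y))$ we have $x_d-\phi_\sigma(x)=\tilde\psi_\sigma(y)$. For $|y|\ge2$ the mollification changes $\psi_{out}$ by at most $C|\varsigma||y|^{-1}$, so $\tilde\psi_\sigma(y)=2\varsigma(\log|y|+\log(1+\log|y|))+O(|\varsigma|)$. Since $|x-y|\le C|\sigma|(1+\log|y|)$, we get $\log|y|=\log|x|+o(1)$. Inside $B_2$ the function $\tilde\psi_\sigma$ is bounded by $C|\sigma|$. Recalling $\textup{sgn}(\varsigma)=-\textup{sgn}(\sigma)$, it follows that $\textup{sgn}(\sigma)(x_d-\phi_\sigma)\le -c|\sigma|\max\{1,\log|x|\}+C|\sigma|$. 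The bounded error must be absorbed to reach the form \eref{barrier-expansion-2d}. We do this with a vertical translation of size $C|\sigma|$, which is harmless because \eref{barrier-eqn} is translation invariant in $x_d$ (the defect is a vertical cylinder); if needed, we also adjust the constant.

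The step I expect to be the main obstacle is making the inversion global and checking that every sign is consistent. This means verifying that the diffeomorphism is onto the whole supergraph, including near infinity where $\tilde\psi$ grows logarithmically but its gradient still decays. It also means checking that the boundary defect indicator transforms exactly into the cylinder indicator, since $y'=x'$ makes this automatic. The remaining computations are routine.
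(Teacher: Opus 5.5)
Your route is the paper's: invert the hodograph transform of the mollified barrier from \lref{2d-smoothed-barriers}, rescale hyperbolically by $2$ so that the defect $\{|x'|<1/2\}$ becomes $B_1'\times\R$, and read off the far field from $x_d-\phi_\sigma(x)=\tilde\psi_\sigma(y)$. The global inversion, the rescaling and the logarithmic estimate are all fine. The vertical translation by $O(|\sigma|)$ really is needed for the inequality as displayed in \eref{barrier-expansion-2d}, because $\tilde\psi_\sigma\approx 0$ near the origin.

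The problem is the transfer step, which is the actual content of the corollary. Both identities you state there are false, and with them your conclusion does not follow.
\begin{itemize}
\item Write $x=(y',y_d+v(y))$ and $\phi(x)=y_d$. The hodograph operator applied to $v$ equals $-(1+\partial_d v)\,\Delta_x\phi$, which is a \emph{negative} multiple of $\Delta_x\phi$. For example, $v=\epsilon y_d^2$ makes the operator positive, while its inverse $\phi=(\sqrt{1+4\epsilon x_d}-1)/(2\epsilon)$ is concave. If it were a positive multiple, the interior inequality would not flip while the boundary one does. A hodograph supersolution would then give $\Delta\phi_\sigma\le 0$ together with $|\grad\phi_\sigma|\ge 1+\sigma{\bf 1}$, which is neither a sub- nor a supersolution.
\item From $\partial_{x_d}\phi=(1+\partial_d v)^{-1}$ and $\grad_{x'}\phi=-(1+\partial_d v)^{-1}\grad' v$ you get $|\grad\phi|=\sqrt{1+|\grad'v|^2}/(1+\partial_d v)$. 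So the boundary condition of \eref{hodo-PDE-barr} encodes $|\grad\phi_\sigma|=1+\sigma{\bf 1}$, not $|\grad\phi_\sigma|(1+\sigma{\bf 1})=1$. Your version is the problem with coefficient $1+\varsigma{\bf 1}$, which is the wrong one for \eref{barrier-eqn}.
\end{itemize}

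With the correct identities the step closes. For $\sigma>0$ (so $\varsigma<0$) the lemma gives a hodograph supersolution. That is, the operator applied to $\tilde\psi_\sigma$ is $\le 0$ and $(1+\sigma{\bf 1})(1+\partial_d\tilde\psi_\sigma)\le\sqrt{1+|\grad'\tilde\psi_\sigma|^2}$. Hence $\Delta\phi_\sigma\ge 0$ and $|\grad\phi_\sigma|\ge 1+\sigma{\bf 1}$. Both conditions flip \emph{together}, which is the sub/super switch the paper refers to. The case $\sigma<0$ is symmetric.

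One further remark. Your expansion $\tilde\psi_\sigma=2\varsigma(\log|y|+\log(1+\log|y|))+O(|\varsigma|)$ also gives $\textup{sgn}(\sigma)(x_d-\phi_\sigma)\ge -C|\sigma|\max\{1,\log|x|\}$, with no translation, because $0\le\log(1+\log|y|)\le\log|y|$. This reverse inequality is the one actually used in the $d=2$ part of the proof of \tref{asymptotic-expansion-at-infty}, so you should record it as well.
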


\subsubsection{Barriers for large defects: subsolutions in $d \geq 3$}\label{s.barrier-subsolution} To achieve a subsolution, we will perturb the half-plane solution $(x_d)_+$ by putting a large sink term along a line segment 
\[L_R:= \{x \in \R^d: |x'|=0, -R\leq x_d \leq R\}.\] 
Namely we consider, for $d\geq 3$,
\begin{equation}\label{e.line-subsolution}
\phi_R(x):= \left(x_d - \int_{L_R} |x-y|^{2-d} d\mathcal{H}^1(y)\right)_+.
\end{equation}
It is straightforward to check that
\[\phi_R(x) = x_d - 2R|x|^{2-d}+O(|x|^{1-d}) \ \hbox{ as } \ \{\phi_R>0\} \ni x \to \infty.\]

\begin{lemma}\label{lem:subsolution}
For any $\sigma>0$ and $d\geq 3$, there are $R$, $a$, and $z$ depending on $\sigma$ and $d$ such that $\phi(x):= a^{-1}\phi_R(ax+ze_d)$ is a supersolution of \eref{barrier-eqn}. 
\end{lemma}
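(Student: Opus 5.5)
The statement says ``supersolution'', but this must be a typo. The section constructs subsolutions for $\sigma>0$, and \pref{barrier-prop}(iii) asks for a subsolution. So I will show that $\phi$ is a \emph{subsolution} of \eref{barrier-eqn}, and that it satisfies \eref{barrier-expansion-3d}.

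Write $V_R(x):=\int_{L_R}|x-y|^{2-d}\,d\mathcal H^1(y)$ and $w:=x_d-V_R$. Then $V_R$ is harmonic off $L_R$ and $V_R=+\infty$ on $L_R$ (since $d\ge3$). Hence $\{w>0\}$ stays away from $L_R$ and $\phi_R=w_+$ is harmonic in its positivity set. The free boundary condition is invariant under the rescaling $x\mapsto a^{-1}\phi_R(ax+ze_d)$, which preserves gradients. It maps the free boundary of $\phi_R$ to $a^{-1}(\partial\{\phi_R>0\}-ze_d)$, and it pulls the cylinder $B_1'\times\R$ back to $\{|x'|<a\}$. So it suffices to show two things:
\begin{enumerate}
\item[(a)] $|\grad w|\ge 1$ on all of $\partial\{\phi_R>0\}$;
\item[(b)] $|\grad w|\ge 1+\sigma$ on $\partial\{\phi_R>0\}\cap\{|x'|\le a\}$, for suitable $R$ and $a$.
\end{enumerate}
After this, $z$ is a harmless vertical shift; I will take $z=0$, or center the tip if convenient. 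Since $w$ is smooth near the free boundary and $\grad w\neq 0$ there, $\phi$ is a classical subsolution, as required for \lref{sliding-comparison}.

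\textbf{Step (a).} Expand $|\grad w|^2=1-2\partial_dV_R+|\grad V_R|^2$. The segment is symmetric about $\{x_d=0\}$, so for fixed $x'$ the map $t\mapsto V_R(x',t)$ is even and decreasing in $|t|$. Thus $\partial_d V_R\le 0$ on $\{x_d\ge 0\}$. Every free boundary point satisfies $x_d=V_R(x)>0$. Therefore $|\grad w|^2\ge 1+|\grad V_R|^2\ge1$ on the whole free boundary.

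\textbf{Step (b).} This is the main point. Above the segment on the axis, $t\mapsto V_R(0,t)$ decreases from $+\infty$ at $t=R^+$ to $0$, while $t$ increases. So there is a unique tip $p=(0,h)$ with $h>R$ and $h=V_R(p)$. At the tip,
\[
-\partial_dV_R(p)=(h-R)^{2-d}-(h+R)^{2-d}.
\]
Since $h=V_R(p)\gtrsim R(h-R)^{2-d}$ when $h-R\ll R$, one finds $h-R\to0$ as $R\to\infty$. For instance, in $d=3$ the tip equation is $\log\frac{h+R}{h-R}=h$, giving $h-R\approx 2Re^{-R}$. Hence $|\grad V_R(p)|\to\infty$, and I fix $R=R(\sigma,d)$ with $|\grad V_R(p)|^2\ge 4\sigma+2\sigma^2$. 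By continuity there is $\rho>0$ such that $|\grad V_R|^2\ge 2\sigma+\sigma^2$ on $B_\rho(p)$, and then $|\grad w|\ge1+\sigma$ there.

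It remains to find $a>0$ with $\partial\{\phi_R>0\}\cap\{|x'|\le a\}\subset B_\rho(p)$. I argue by compactness. Take free boundary points $x^k$ with $|x^{k\prime}|\to0$.
\begin{itemize}
\item Their heights are bounded: $x^k_d=V_R(x^k)$, and $V_R\to0$ at infinity, so large $x^k_d$ would force $V_R(x^k)$ small, a contradiction.
\item They stay away from $L_R$, because $V_R\to\infty$ near $L_R$ while $x^k_d=V_R(x^k)$ stays bounded.
\item Any limit point lies on the axis and satisfies $x_d=V_R(x)>0$, so it lies above the segment and must be $p$ by uniqueness.
\end{itemize}
This gives the required $a=a(\sigma,d)$.

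\textbf{Expansion.} The bound \eref{barrier-expansion-3d} for $\phi$ follows from $0\le V_R\le C(R)\min\{1,|x|^{2-d}\}$ on $\{\phi_R>0\}$, using the stated expansion $V_R=2R|x|^{2-d}+O(|x|^{1-d})$. The scaling by $a$ and the shift by $z$ only change the constant $C_1(\sigma,d)$.

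I expect the only real obstacle to be the quantitative tip-gradient blow-up as $R\to\infty$ in general $d\ge3$. The rest is soft, resting on monotonicity and compactness.
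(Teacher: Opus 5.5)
Your proposal follows the paper's proof. It uses the same segment sink and the same symmetry argument, giving $\partial_d V_R\le 0$ on $\{x_d\ge0\}$ and hence $|\grad\phi_R|\ge 1$ on the free boundary. It also uses the same observation that the tip height $h$ satisfies $h-R\to0$, so that the tip gradient blows up, followed by the same hyperbolic rescaling. You are right that ``supersolution'' is a typo: the paper's proof ends with a subsolution. Two of your additions are slightly more careful than the paper, which centers at the tip and only speaks of $\Gamma\cap B_1(0)$. Your compactness argument places the free boundary inside the whole cylinder $\{|x'|\le a\}$ near the tip. Your choice $z=0$ keeps $0\le x_d-\phi$ in \eref{barrier-expansion-3d} without an extra vertical shift.

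The one step that fails as written is your justification of $h-R\to0$ for $d>3$. The inequality $V_R(p)\gtrsim R(h-R)^{2-d}$ is false. With $\epsilon=h-R\ll R$, one has $V_R(p)=\int_\epsilon^{2R+\epsilon}u^{2-d}\,du\approx\epsilon^{3-d}/(d-3)$, or $\log(2R/\epsilon)$ when $d=3$, which is far smaller than $R\epsilon^{2-d}$. The inequality also points the wrong way: a lower bound on $h$ in terms of $h-R$ yields a lower bound on $h-R$, not convergence to zero. What you need is an upper bound. For $d>3$, $h=V_R(p)\le\int_{h-R}^\infty u^{2-d}\,du=(h-R)^{3-d}/(d-3)$, and together with $h>R$ this gives $h-R\le((d-3)R)^{-1/(d-3)}\to0$. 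For $d=3$, your formula $h=\log\frac{h+R}{h-R}$ gives $h-R=(h+R)e^{-h}<2he^{-h}\to0$. Then $-\partial_dV_R(p)\ge(h-R)^{2-d}-(2R)^{2-d}\to\infty$, and the rest of your argument goes through unchanged.
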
 

Note that, although we can define a similar barrier in dimension $d=2$ it does not seem to satisfy the subsolution condition at the free boundary. The following proof relies on the fact that the free boundary of $\phi_R$ is contained in $x_d \geq 0$, this cannot be the case any longer in $d=2$ since for any solution with nontrivial capacity the free boundary will need to grow logarithmically into $x_d < 0$.
\begin{figure}
    \centering
    \begin{tikzpicture}[scale = 2]
    \draw (0,1) -- (0,-1) node[below]{$L_R$};
    \draw[dashed] (2,0) -- (-2,0) node[left] {$x_d = 0$};
    \draw[domain=-2:2,smooth,variable=\x]
  plot ({\x},{1.1/(1+4*\x*\x)}) node[right] {$\Gamma$};
  \def\xx{-1};
  \def\yy{{1.1/(1+4*\xx*\xx)}};
  \def\zz{{2*1.1/(1+4*\xx*\xx)-1}};
  \def\minusloc{{1.1/(1+4*\xx*\xx)+.5*(1-1.1/(1+4*\xx*\xx))}};
  \def\plusloc{{1.1/(1+4*\xx*\xx)-.5*(1-1.1/(1+4*\xx*\xx))}};
  \filldraw (\xx,\yy) circle (.5pt) node[above left] {$x$};
  \draw[dotted] (\xx,\yy) -- (0,1);
  \draw[dotted] (\xx,\yy) -- (0,\zz) node[right]{$(0,2x_d-R)$};
  \draw[dotted] (\xx,\yy) -- (0,\yy);
  \node[above] at (0,1.1) {$\phi_R>0$};
  \node[right] at (0,\minusloc) {$-$};
  \node[right] at (0,\plusloc) {$+$};
\end{tikzpicture}
    \caption{Diagram showing the free boundary $\Gamma$ of the barrier $\phi_R$, the line $L_R$ where the sources are placed, and the geometry of the integral cancellation which leads to the subsolution property.}
    \label{f.subsolution-barrier}
\end{figure}
\begin{proof} 
By construction the positive set does not contain $L_R$, and so $\phi_R$ is harmonic in its positive set.  Now we need to measure $D\phi_R$ on the free boundary $\Gamma:=\partial\{\phi_R>0\}$ to conclude that this is a subsolution. Also note that, since $\phi_R(x) < x_d$, the closure of the positive set of $\phi_R$ is contained in $\{x_d> 0\}$, in particular $x_d >0$ on $\Gamma$.

Computing the derivative we find the formula
\begin{equation}\label{e.subsoln-integral}
\grad\phi_R(x)= e_d+(d-2)\int_{-R}^R \frac{x-y}{|x-y|^d} dy_d.
\end{equation}
We claim that the $e_d$ component of the integral above is positive on $\Gamma$. This is clear if $x_d \geq R$, while if $0<x_d <R$ we use symmetry to cancel parts of the integral, as in \fref{subsolution-barrier}, and thus we end up with
\[
\int_{-R}^{R} \frac{x_d-y_d}{|x-y|^d} dy_d = \int_{-R}^{2x_d-R} \frac{x_d-y_d}{|x-y|^d} dy_d \geq 0.
\]
It follows that $|\grad \phi_R| \geq |\partial_d \phi_R| \geq 1$ on $\Gamma$.

We explain the remainder of the proof heuristically, and then proceed with the rigorous details. We will look at the point $z_0 = z e_d \in \Gamma \cap \{|x'|=0\}$, which is the highest point on the free boundary and where we expect that the slope is most steep. We will use the implicit formula for the location of $z$ to show that $z \to Re_d$ as $R \to \infty$.  Using the integral formula for $\grad \phi_R(z_0)$ this will show that $|\grad \phi_R(z_0)| \to +\infty$ as $R \to +\infty$. Then we can use continuity of the gradient and hyperbolic rescaling to achieve a barrier with large gradient in a unit neighborhood of the origin.

Now we make this precise. Let $z_0:=ze_d\in\Gamma$. There is a unique such point since the previous arguments show that $z \mapsto \phi_R(ze_d)$ has derivative at least $1$ on $R < z < +\infty$ and approaches $-\infty$ as $z\searrow R$. Writing out the equation $\phi_R(z_0) = 0$
\[z = \int_{-R}^R (z-y_d)^{2-d} dy_d.\]
By computing the integral in the previous equation we arrive at the implicit formula
 \begin{equation}\label{computation}
z = \begin{cases}
    \log (1+\frac{2R}{z-R}) & d = 3\\
    (z-R)^{3-d}- (z+R)^{3-d} & d>3.
\end{cases} 
\end{equation}
We claim that $z \searrow R$ as $R \to \infty$. From \eqref{computation} it follows that 
\[(z-R) \lesssim
\begin{cases}
Re^{-R} & d = 3\\
R^{\frac{1}{d}-3} & d >3.
\end{cases}
\]

Now note the formula, following from \eref{subsoln-integral}
\begin{equation}\label{e.subsoln-integral2}
    \partial_d \phi_R(z_0) = e_d + c(d-2)\int_{-R}^R \frac{1}{|z_d-y_d|^{d-1}} \ dy_d.
\end{equation}
Then it follows from \eref{subsoln-integral2} and $z \searrow R$ that $|\grad\phi_R(z_0)| \to +\infty$ since $f(a) = |a|^{1-d}$ is not integrable near zero for $d\geq 2$.

Lastly, by choosing $R$ sufficiently large and performing hyperbolic rescaling by 
\[\phi(x):= a^{-1}\phi_R(ax +ze_d)\]
we can generate a subsolution with slope exceeding any particular desired threshold on $\Gamma \cap B_1(0)$.

\end{proof}

\subsubsection{Barriers for large defects: supersolutions}\label{s.barrier-supersolution}

To create a supersolution barrier we instead pull the half-plane solution outward via a positive source term in the complement of the positivity set.   Unlike the subsolution case before, this construction does work in all dimensions $d \geq 2$. Let $r>0$ be a free parameter and define
\[
\psi_r^0(x):= x_d + \Phi(x+(r+1)e_d)
\]
where $\Phi$ is the normalized Laplace fundamental solution in dimension $d$
\[\Phi(x) := \begin{cases} 
-\log |x| & d = 2\\
\frac{1}{d-2}\frac{1}{|x|^{d-2}} & d \geq 3.
\end{cases} \ \hbox{ with }  \ \grad \Phi(x) = -\frac{x}{|x|^d}.\]
 Notice that
\[\partial_{x_d}\psi_r^0(-re_d)=0\]
The heuristic idea is to choose $r$ so that $-re_d$ is (almost) on the free boundary and so the gradient will be very small near $-re_d$, then we can perform a hyperbolic rescaling centered at $-re_d$ to get small gradient in $B_1$. Note that in that case the zero level set of $\psi_r$ has a saddle-type singularity at $-re_d$ so we want to perturb slightly away from this scenario so that the free boundary is smooth.

We choose $r> \Phi(1)$ so that 
\[
\psi_r^0(-re_d) = -r + \Phi(1)  <0.
\]
For this range of $r$, along $x_d$-axis there are three free boundary points $x^i = s_ie_d$, $i=0,1,2$ with 
\[s_2<-(r+1)<s_1<-r< s_0<0.\]
See \fref{supersolution-construction}. The positive set $\{\psi_r^0>0\}$ then has two connected components, $\Omega_1$ that contains the half space $\{x_d\geq 0\}$ and has $x^0$ on its boundary, and a bounded component $\Omega_2$ containing $x^1$ and $x^2$ on its boundary. Define
\begin{equation}\label{e.psiR-defn}
    \psi_r(x):=\psi_r^0(x){\bf 1}_{\Omega_1}.
\end{equation}

\begin{figure}
    \centering
    \begin{tikzpicture}
    \input{figures/contour_data.tikz}
    \input{figures/contour_data_sing.tikz}
    \node at (0,1) {$\Omega_1$};
    \node at (0,-.8){$\Omega_2$};
    \filldraw (0,0) circle (1pt) node[right]{$-re_d$};
\end{tikzpicture}
    \caption{Supersolution barrier construction. Zero level set of $\psi_r(x)$ for an $r>\Phi(1)$ plotted with solid line, the level set passing through $-re_d$ is also plotted with a dotted line. }
    \label{f.supersolution-construction}
\end{figure}

\begin{lemma}\label{supersolution} For any $\sigma>0$ and $d\geq 3$, there are $a>0$ and $r > \Phi(1)$ such that  ${\psi}(x):= a^{-1} \psi_r(ax+s_0e_d)$ is a supersolution of \eref{defect-prob}. 
\end{lemma}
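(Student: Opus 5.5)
We will verify the supersolution property of $\psi_r$ directly and then tune $r$ and $a$ so that the free boundary slope of the rescaled barrier is below $1+\sigma$ near the defect. Here $\sigma$ plays the role of $|\min q|$: we need slope at most $1-|\sigma|$, i.e. slope as small as we like, on $\partial\{\psi>0\}\cap \overline{B_1}$. Since $\psi_r^0$ is harmonic away from $-(r+1)e_d$ and that pole lies in $\Omega_2$ (or in the zero set), $\psi_r$ is harmonic in $\Omega_1=\{\psi_r>0\}$. The conclusion to aim for has two parts. First, $|\grad\psi_r|\le 1$ on all of $\partial\Omega_1$. Second, $|\grad\psi_r|$ is arbitrarily small in a neighborhood of $x^0=s_0e_d$ of size comparable to a parameter we can blow up. After the hyperbolic rescaling $a^{-1}\psi_r(ax+s_0e_d)$, which preserves harmonicity and the gradient values, that neighborhood becomes $B_1$.

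For the global bound, write $\grad\psi_r^0 = e_d - \frac{x+(r+1)e_d}{|x+(r+1)e_d|^d}$. Set $w=x+(r+1)e_d$ and $\rho=|w|$. Then
\[|\grad\psi_r^0|^2 = 1 - 2\frac{w_d}{\rho^{d}} + \rho^{2-2d}.\]
This is $\le 1$ exactly when $2w_d \ge \rho^{2-d}$. On the free boundary we have $x_d = -\Phi(w)$, so $w_d = r+1-\Phi(w)$. In $d\ge3$, $\Phi(w)=\frac{\rho^{2-d}}{d-2}$, so the condition reads $2(r+1) \ge \rho^{2-d}\bigl(1+\frac{2}{d-2}\bigr)$. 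The first key step is a lower bound $\rho\ge \rho_*(r)$ on $\partial\Omega_1$. Such a bound should hold because $\Omega_1$ is separated from the pole by the saddle region around $-re_d$. Concretely, we will show that every point of $\partial\Omega_1$ satisfies $x_d\ge s_1$, hence is at distance at least about $1$ from the pole; this uses that on $\{x_d\le -r\}$ near the axis one is inside $\Omega_2$ or the zero set. Combined with $r>\Phi(1)$, this should give $2(r+1)\ge C\rho^{2-d}$, possibly after taking $r$ larger. For $d=2$ the same computation applies with $\Phi=-\log$ and gives $2(r+1+\log\rho)\ge 1$, which is similar.

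For the local smallness, note that $\grad\psi_r^0(-re_d)=0$ and $\psi^0_r(-re_d)=\Phi(1)-r$. We choose $r=\Phi(1)+\epsilon$ with $\epsilon\downarrow0$. Then $x^0\to -re_d$, and near $-re_d$ the function is a nondegenerate saddle: its Hessian has eigenvalues of order $1$ with opposite signs along $e_d$ and $x'$. Quantitatively, $s_0+r=O(\sqrt\epsilon)$, so $|\grad\psi_r|\le C\sqrt\epsilon$ on $\partial\Omega_1\cap B_{\delta}(x^0)$ for some $\delta(\epsilon)\to0$. Moreover $|\grad\psi_r|\le C\delta$ on $\partial\Omega_1\cap B_\delta(x^0)$, because $\grad\psi_r$ vanishes at $-re_d$ and is Lipschitz there. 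Taking $a=\delta$ gives a slope of at most $C\delta<1+\sigma$ on $\partial\{\psi>0\}\cap B_1$ once $\delta$ is small, and slope at most $1$ elsewhere. This makes $\psi$ a supersolution of \eref{defect-prob} with $q=\sigma{\bf 1}_{B_1}$, $\sigma<0$. For the barrier-equation version on cylinders, the same reasoning applies near the axis of the cylinder.

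The main obstacle is the global inequality $|\grad\psi_r|\le1$ on the far parts of $\partial\Omega_1$, in particular near the saddle where $\partial\Omega_1$ comes closest to the pole. I would first try to prove $x_d\ge s_1$ on $\partial\Omega_1$ by a monotonicity argument in $|x'|$, using that $\psi^0_r$ increases in $|x'|$ on $\{x_d<-(r+1)\}$. If that is not enough, I would enlarge $r$, which would instead require using degeneracy at a shifted point. Finally, the asymptotics \eref{barrier-expansion-3d}/\eref{barrier-expansion-2d} follow because $x_d\le\psi(x)\le x_d+a^{-1}\Phi(\cdot)$, and $\Phi$ decays like $|x|^{2-d}$ (respectively grows like $-\log$).
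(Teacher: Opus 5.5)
Your overall plan matches the paper's proof. You show $\partial\Omega_1$ stays away from the pole, deduce $|\grad\psi_r|\le 1$ on all of $\partial\Omega_1$, then send $r\searrow\Phi(1)$ and rescale hyperbolically at $s_0e_d$. Your criterion ($|\grad\psi_r^0|^2\le 1$ iff $2w_d\ge\rho^{2-d}$) is correct, and so is your local saddle analysis ($s_0+r=O(\sqrt{\epsilon})$, gradient vanishing at $-re_d$, $a=\delta$). The gap is the step you yourself flag as the main obstacle, and the mechanism you propose for it is false. For $d\ge 3$,
$\partial_{|x'|}\psi_r^0=-|x'|\rho^{-d}<0$ at every height. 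So $\psi_r^0$ is decreasing, not increasing, in $|x'|$ on $\{x_d<-(r+1)\}$, and everywhere else too. Moreover, the region below the pole is not what separates $\Omega_1$ from it. Your fallback of enlarging $r$ is also unavailable, because the local step needs $r\downarrow\Phi(1)$.

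The missing observation is the correct monotonicity, which is exactly how the paper gets \eref{observation}. Since $\psi_r^0(x',x_d)\le\psi_r^0(0,x_d)$, and $\psi_r^0(0,x_d)<0$ for $s_1<x_d<s_0$ (with $\psi_r^0(x',s_0)\le 0$), the slab $\{s_1<x_d\le s_0\}$ contains no positive points. Hence $\Omega_1\subset\{x_d>s_0\}$ and $x_d\ge s_0$ on $\partial\Omega_1$, which is stronger than the bound $x_d\ge s_1$ you aimed for. It follows that $w_d=x_d+r+1\ge s_0+r+1>1$ and $\rho\ge w_d>1$, so $2w_d>2>1\ge\rho^{2-d}$. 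Your criterion therefore holds for every $r>\Phi(1)$, without the free boundary relation and without enlarging $r$. Your criterion is in fact the cleaner way to close this step. The paper's displayed chain uses $-2AM^{-d}\le -2M^{1-d}$, which would require $A\ge M$, whereas actually $M\ge A$. Its conclusion is nevertheless correct, since $|\grad\psi_r|^2=1-M^{-d}(2A-M^{2-d})$ and $2A-M^{2-d}\ge 1$.
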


\begin{proof}
    Let $s_0\in (-r,0)$ as above and $s_0e_d$ be the point on the intersection of $\partial \Omega_1  = \partial \{\psi_r>0\}$ with the $x_d$-axis. We claim that $s_0e_d$ is the lowest point on the free boundary, more precisely that 
  \begin{equation}\label{e.observation}
x_d \geq s_0 \ \hbox{ for all } \ x \in \partial \Omega_1.
\end{equation}
This is clear from \fref{supersolution-construction}, and follows from the fact that $\psi^0_r(x',x_d)$ is strictly monotonically decreasing as $|x'|$ increases for fixed $x_d$.

Now let us show the supersolution property $|D\psi_r|\leq 1$ on $\partial\Omega_1$.  Let $x \in \partial \Omega_1$. Denote $M(x):= |x+(r+1)e_d|$ and $A(x):= x_d+r+1$. Note that \eref{observation} implies that
\[A = x_d + r + 1 \geq s_0 + r + 1 \geq 1 \ \hbox{ and also } \ M = (A^2+|x'|^2)^{1/2} \geq A \geq 1.\]
Now we can compute the slope
\begin{align*}
|D\psi_r(x)|^2 &= \left|e_d - \tfrac{(x+(r+1)e_d)}{|x+(r+1)e_d|^d}\right|^2\\
&= 1-2 AM^{-d} + M^{-2(d-1)}\\ 
&\leq 1-2M^{-(d-1)}+ M^{-2(d-1)}\\
&= (1-M^{1-d})^2\\
&\leq1.
\end{align*}
This verifies the supersolution condition along the free boundary.

Note that as $r \searrow \Phi(1)$ then $|D\psi_r|(s_0e_d) \to 0$. By continuity of the gradient and hyperbolic rescaling $\psi(x) :=a^{-1}\psi_r(ax+s_0e_d)$ we can ensure that $\sup_{B_1}|\grad \psi|^2 \leq \sigma$. Note that, following from the definition of $\psi^0_r$, in $d \geq 3$
\[\psi(x)  = x_d+s_0+\frac{a^{1-d}}{d-2}\frac{1}{|x|^{d-2}} + O(|x|^{1-d}) \ \hbox{ as } \ \{\psi>0\} \ni x \to \infty,\]
and in $d=2$
\[\psi(x)  = x_d-a^{-1}\log|x| + O(1) \ \hbox{ as } \ \{\psi>0\} \ni x \to \infty.\]

\end{proof}

  \bibliographystyle{amsplain}
\bibliography{single-site-articles}

\end{document}